\newcommand{\dbtilde}[1]{\accentset{\approx}{#1}}
\newtheorem{theorem}{Theorem}
\newtheorem{corollary}[theorem]{Corollary}
\newtheorem{definition}[theorem]{Definition}
\newtheorem{lemma}[theorem]{Lemma}
\newtheorem{proposition}[theorem]{Proposition}
\newtheorem{remark}[theorem]{Remark}
\newcommand{\T}{\mathbb{T}}
\newcommand{\cL}{\mathcal{L}}
\newcommand{\cT}{\mathcal{T}}
\newcommand{\cD}{\mathcal{D}}
\newcommand{\cW}{\mathcal{W}}
\newcommand{\cJ}{\mathcal{J}}
\newcommand*{\eea}{\end{array}}
\newcommand{\ckX}{\check{X}}
\newcommand{\cku}{\check{u}}
\newcommand{\ckh}{\check{h}}
\newcommand*{\bme}{\begin{multiequations}}
\newcommand*{\eme}{\end{multiequations}}
\newcommand{\alf}{\frac{1}{2}}
\renewcommand*{\Omega}{\varOmega} 
\newcommand{\sub}{\scriptscriptstyle} 
\renewcommand*{\vec}{\boldsymbol} 
\newcommand{\df}{\mathrm{d}} 
\newcommand{\dt}{\df t} 
\newcommand{\sdbt}{\sigma \df B_t} 
\newcommand{\sdbth}{\sigma_{\sub H} \df \boldsymbol{B}_t} 
\newcommand{\sdbtz}{\sigma_{\sub z} \df B_t} 
\newcommand{\dpt}{\df p_t^{\sigma}} 
\newcommand{\Df}{\mathrm{\mathbb{D}}} 
\newcommand{\bdot}{\boldsymbol{\cdot}} 
\newcommand{\grad}{\nabla} 
\newcommand{\gradh}{\nabla_{\sub H}}
\newcommand{\tp}{^{\scriptscriptstyle T}} 
\newcommand{\Exp}{\mathbb{E}} 
\newcommand{\tr}{\mathrm{tr}} 
\renewcommand*{\div}{\nabla\bdot} 
\newcommand{\divh}{\nabla_{\sub H}\boldsymbol{\cdot}}
\newcommand{\adv}{\bdot\nabla} 
\newcommand{\laplac}{\nabla^{2}} 
\newcommand{\real}{\mathbb{R}} 
\providecommand\bcdot{\boldsymbol{\cdot}}
\renewcommand*{\Omega}{\varOmega}
\renewcommand*{\Sigma}{\varSigma}
\newcommand{\nab}{\boldsymbol{\nabla}}
\begin{document}

\date{}
\title{\textbf{Analytical Properties for a Stochastic Rotating Shallow Water Model under Location Uncertainty}}
\author{Oana Lang \qquad Dan Crisan \qquad Etienne M\'emin }
\maketitle

\begin{abstract}
    The rotating shallow water model is a simplification of oceanic and atmospheric general circulation models that are used in many applications such as surge prediction, tsunami tracking and ocean modelling. In this paper we introduce a class of rotating shallow water models which are stochastically perturbed in order to incorporate model uncertainty into the underlying system. The stochasticity is chosen in a judicious way, by following the principles of  location uncertainty, as introduced in \cite{Memin2014}. We prove that the resulting equation is part of a class of stochastic partial differential equations that have unique maximal strong solutions. The methodology is based on the construction of an approximating sequence of models taking value in an appropriately chosen finite-dimensional Littlewood-Paley space. Finally, we show that a distinguished element of this class of stochastic partial differential equations has a global weak solution.    

\end{abstract}


\section{Introduction}
Numerical ocean models still exhibit a limiting “irreducible imprecision” compared to measured quantities in the turbulent regimes of the ocean dynamics. Air-sea interactions, small-scale turbulent parameterization, boundary processes, mixing and biogeochemical fluxes are currently very badly constrained in models, leading to large uncertainties for the prediction of climate and the modelling of the ocean biogeochemical cycles. Problematic issues also occur for the short-time forecasting of extreme events such as tropical cyclones and surges. The difficulty of parameterizing small-scale physical processes effects together with the need of probabilistic ensemble forecasting and uncertainty quantification calls for the introduction of new stochastic fluid dynamics models. To be deployed efficiently, such models must be generic and fit general physical conservation laws. 
Our efforts are in line with those advocated in \cite{Hasselmann-76},\cite{Palmer-2019}-\cite{Berner-2017} (and references therein) towards developing generic ``stochastic physics'' principles for fluid flows. Analytical solutions or accurate numerical simulations of these models depend greatly on the mathematical knowledge we have on them. This study is a step toward the development of physically meaningful (in a sense that will be described below) stochastic fluid dynamics models. 

 The introduction of  stochastic modeling principles for geophysical flows has been a focus of great attention  in climate study, meteorology and oceanography since the seminal works of \cite{Hasselmann-76} and \cite{Leith-75}. Most of the schemes proposed so far are built from pragmatic considerations or from homogenisation/averaging techniques coupled with linear or quasi-linear ansatz dynamics to encode the evolution of the small-scale random processes (see \cite{Berner-2017, Franzke-2015, Gottwald-2017, Majda-2008, Palmer-2019} and reference therein for extensive reviews on the subject). In particular, pragmatic schemes built from parameters' perturbation have  shown to bring some noticeable improvements in weather forecasting applications (see for instance \cite{Palmer-2019} for some examples). However, these schemes remain ad-hoc in their conception and lack of general solid grounds to be extended to other models or configurations just as does classical physics. They also  face some theoretical issues, for example the  control the variance brought by the random terms, which may, beyond numerical stability, deeply change the asymptotic nature of the underlying dynamical system, even for low noise magnitude (see \cite{Chapron-2018-QJRMS} for an example of this phenomena on the classical Lorenz 63 model).  
 
 A new class of general models based on stochastic transport has been recently proposed \cite{Bauer-2020-JPO,Memin2014,Resseguier-2017-GAFD-I}. This framework, referred to as modelling under location uncertainty (LU), has the advantage to be derived from physical conservation laws expressed through the stochastic transport of fluid parcels. As such, the LU models are directly extendable to classical approximations of geophysical dynamics. One of their strong assets is that they enjoy proper energy conservation and provide new approaches to subgrid modelling, expressed both in terms of fluctuation distributions, and spatial/temporal correlations. 
 
 In this study we will introduce and analyse a rotating shallow water model (RSW) built under the location uncertainty principle. We will begin by presenting the general principles undergoing the LU scheme and provide a thorough description of the derivation of the stochastic rotating shallow water under location uncertainty (LU-SRWS).\\[0.5mm]
 
 \noindent\textbf{Modelling under location uncertainty} \\
 The LU principle relies on a decomposition of the Lagrangian displacement
\begin{equation}\label{eq:dX}
	\df X_t = u({X}_t, t)\, \dt + \sigma (X_t, t)\, \df B_t,
\end{equation}
 in terms of a large-scale velocity component  $u$ that is both spatially and temporally correlated, and a highly oscillating  unresolved component  (also called noise) term $\sdbt$ that is correlated in space, but not in time. The noise is defined by a cylindrical Brownian motion $B_t$ \cite{Daprato-Zabczyk} and its spatial structure  is specified, within the bounded spatial domain $\Omega \subset \real^d\ (d = 2\ \text{or}\ 3)$,  through a deterministic Hilbert-Schmidt operator $\sigma: (L^2 (\Omega))^d \to (L^2 (\Omega))^d$, acting on square integrable vector-valued functions $f\in (L^2 (\Omega))^d$, with a bounded kernel  $\breve{\sigma}$ such that 
\begin{equation}\label{eq:corr}
	\sigma [f] (x, t)  = \int_{\Omega} \breve{\sigma} (x, y, t) f (y)\, \df y,\ \quad \forall f \in (L^2 (\Omega))^d.
\end{equation}
The random flow $\sdbt$ is a centered Gaussian process with the well-defined \emph{covariance tensor}:
\begin{align}\label{eq:cov}
	Q (x, y, t, s) &= \Exp \Big[ \big( \sigma (x, t)\, \df B_t \big) \big( \sigma (y, s)\, \df B_s \big)\tp \Big] \nonumber \\
	&= \delta(t-s)\, \dt \int_{\Omega}{\breve{\sigma} (x, z, t) \breve{\sigma}\tp (y, z, s)}\, \df z,
\end{align}
where $\Exp$ stands for the expectation, $\delta$ is the Kronecker symbol and $\bullet\tp$ denotes matrix or vector transpose. The magnitude of the noise is given by its quadratic variation, denoted here as $a$, and which is here given by the diagonal components of the covariance per unit of time:
\begin{equation}\label{eq:var0}
	a (x, t) \dt = Q (x, x, t, t).
\end{equation}
We remark that this  variance tensor has the same unit as the diffusion tensor ($\text{m}^2 \cdot \text{s}^{-1}$). The turbulent kinetic energy (TKE) is related to the  integral  over the whole volume of the variance tensor divided by the decorrelation time: $\alf \tr (a)/\tau$ .
As the covariance is a self-adjoint compact operator the noise can also be conveniently expressed, through its orthogonal eigenfunctions $\{  \Phi_n (\bullet, t) \}_{\sub n \in \mathbb{N}}$ weighted by the eigenvalues $\Lambda_n \geq 0$ with $\sum_{\sub n \in \mathbb{N}} \Lambda_n < \infty$, in terms of the spectral decomposition:
\begin{equation}\label{seq:KL}
	\sigma (x, t)\, \df B_t = \sum_{n \in \mathbb{N}} \Phi_n (x, t)\, {\df \beta_t^n},\ \quad a (x, t) = \sum_{n \in \mathbb{N}} \Phi_n (x, t) \Phi_n\tp (x, t),
\end{equation}
where $\beta^n$ denotes $n$ independent and identically distributed (i.i.d.) one-dimensional standard Brownian motions. The specification of those basis functions from data driven empirical covariance matrices enables us to construct specific noises, informed either by numerical or observational data. 

Let us note that the noise and the decomposition \eqref{eq:dX} have been here defined in terms of It\^{o} integrals. They can be defined by using  Stratonovich integrals (in place of It\^o integrals) with additional regularity assumptions, see \cite{Bauer2020jpo}. 

The LU dynamics models are then derived in similar way as in the deterministic setting through a stochastic version of the Reynolds transport theorem \cite{Bauer-2020-JPO,Memin2014, Resseguier-2017-GAFD-I} and the It\^{o}-Wentzell formula \cite{Kunita}.  To note, a modelling framework based also on stochastic transport has been  proposed in the geometric mechanics community \cite{CotterCrisanPan1, Holm-2015}. This framework is termed Stochastic Advection by Lie Transport (SALT). LU and SALT are derived from extended Newtonian principles and Hamiltonian derivation, respectively. They are the two faces of the same coin, exhibiting different conservation properties, namely energy preservation for the first and circulation
conservation, for the second.  \\

\noindent\textbf{Stochastic transport}\\
In a very similar manner as in the deterministic case, the stochastic flow dynamics in the LU framework is derived from a stochastic Reynolds transport theorem (SRTT) \cite{Memin2014} describing the rate of change of a random scalar $q$ transported by the stochastic flow \eqref{eq:dX} within a flow volume $\mathcal{V}$. For incompressible unresolved flows, $\div \sigma= 0$, the SRTT can be written as
\begin{subequations}
	\begin{align}
		&\df_t\, \Big( \int_{\mathcal{V}(t)} q (x, t)\, \df x \Big) = \int_{\mathcal{V}(t)} \big( \Df_t q + q \div (u - u_s) \big)\, \df \vec{x}, \label{eq:SRTT} \\
		&\Df_t q = \df_t q + (u - u_s) \adv q\, \dt + \sdbt \adv q - \alf \div (a \nabla q)\, \dt, \label{eq:STO} 
	\end{align}
\end{subequations}
where $\df_t q (x, t) = q (x, t + \dt)  - q (x, t)$ denotes the forward time-increment of $q$ at a fixed point $x$, $\Df_t$ is introduced as the stochastic transport operator in \cite{Resseguier-2017-GAFD-I} and $u_s = \alf \div a$ is referred to as the It\^{o}-Stokes drift (ISD) in \cite{Bauer-2020-JPO}. The transport operator plays the role of the material derivative in the stochastic setting. The ISD is defined by the variance tensor divergence and embodies the effect of statistical inhomogeneity of the unresolved flow on the large-scale component. As shown in  \cite{Bauer-2020-JPO}, it can be considered as a generalization of the Stokes drift associated to surface waves propagation  with the emergence of a similar vortex force and Coriolis correction. For a compressible noise term the ISD involves an additional term related to the noise divergence \cite{Resseguier-2017-GAFD-I}. In the definition of the stochastic transport operator in \eqref{eq:STO}, 
the last two terms describe, respectively, an energy backscattering from the unresolved scales to the large scales and an inhomogeneous diffusion of the large scales driven by the quadratic variation of the unresolved flow components. The diffusion term provides in this setting a matrix form of the  Boussinesq eddy viscosity assumption. This generalizes hence this assumption and provides to it a firm mathematical modelling ground in terms of the diffusion attached to time uncorrelated velocity components. It is not anymore defined by a loose analogy with the molecular dissipation mechanism. The backscattering term corresponds to an energy  source that is exactly compensated by the diffusion term.

For an isochoric flow with $\div (u - u_s) = 0$, one may immediately deduce from \eqref{eq:SRTT} the following transport equation of a scalar quantity:
\begin{equation}
	\Df_t q = 0,
\end{equation}
where the energy of such random scalar $q$ can be seen to be pathwise globally conserved through Itô integration by part and adequate boundary conditions:
\begin{equation}\label{eq:energy-balance}
	\df_t\, \Big( \int_{\sub\Omega} \alf q^2\, \df x \Big) = \Big( \underbrace{\alf\int_{\sub\Omega} q \div (a \grad q)\, \df x}_{\text{Energy loss by diffusion}} + \underbrace{\alf\int_{\sub\Omega} (\grad q)\tp a \grad q\, \df x}_{\text{Energy intake by noise}} \Big)\, \dt = 0.
\end{equation}
This energy conservation  for any realization can be interpreted in terms of an exact balance between the energy brought by the noise and the one dissipated by the diffusion term. Physically, the agitation generated by the small-scale component is globally exactly counterbalanced by the diffusion term. The noise quadratic variation plays a fundamental role in this balance expression.    \\

\noindent\textbf{Navier-Stokes equation under location uncertainty}\\
The SRTT and Newton's second principle formulated in a distributional sense \cite{Memin2014,Mikulevicius-Rozovskii-2004} enables us to derive the following (three-dimensional) stochastic Navier-Stokes equations in a rotating frame \cite{Bauer-2020-JPO}:
\begin{subequations}
	\begin{align}
		&{\text{\em Horizontal momentum equation}}:\nonumber\\
		&\Df_t u + f \times \big( u\, \dt + \sdbth \big) = - \frac{1}{\rho} \gradh \big( p\, \dt + \dpt \big) + \nu \laplac \big( u\, \dt + \sdbth\big), \label{eq:hmoment1}\\
		&{\text{\em Vertical momentum equation}}:\nonumber\\
		&\Df_t w = - \frac{1}{\rho} \partial_z \big( p\, \dt + \dpt \big) - g\, \dt + \nu \laplac \big( w\, \dt + \sdbtz \big), \label{eq:vmoment1}\\
		&{\text{\em  Mass equation}}:\nonumber\\
		&\Df_t \rho = 0, \label{eq:mass1} \\
		&{\text{\em Continuity equations}}:\nonumber\\
		&\divh \big( u - u_s \big) + \partial_z (w - w_s) = 0,\ \quad \divh \sdbth + \partial_z \sdbtz = 0. \label{eq:continu1} 
	\end{align}
\end{subequations}
In the horizontal momentum equation  $u= (u_x,u_y)\tp$ (resp. $\sdbth$) 
stands for the horizontal (2D) velocity component (resp. the unresolved random horizontal components); $\vec{f} = (2 \tilde{\Omega} \sin \Theta) k$ denotes the Coriolis parameter varying in latitude $\Theta$, with the Earth's angular rotation rate $\tilde{\Omega}$ and the vertical unit vector $k = [0, 0, 1]\tp$; $\rho$ is the fluid density; $\gradh = [\partial_x, \partial_y]\tp$ denotes the horizontal gradient; $p$ and $\dot{p}_t^{\sigma} = \dpt / \dt$ (informal definition) are the time-smooth and time-uncorrelated (martingale) components of the pressure field, respectively; $g$ is the Earth's gravity value and $\nu$ is the kinematic viscosity. In the following, the molecular friction term is assumed to be negligible and dropped from the equations. In the vertical momentum equation,  $w$ (resp. $\sdbtz$) is the vertical component of the three-dimensional large-scale flow (resp. the unresolved vertical random flow). The two continuity equations \eqref{eq:continu1} ensure volume conservation and mass conservation \eqref{eq:mass1}.\\

\noindent\textbf{Stochastic rotating shallow model under location uncertainty}\\
The shallow water model is usually obtained from scaling reasoning in terms of horizontal and vertical characteristic spatial scales with a ratio between them much smaller than one. Vertical integration to recover an  along-depth  mean 2D (compressible) velocity field and again scaling assumption (through Froude number  -- the ratio between the characteristic scale of current velocity and waves velocity  -- of one, corresponding to the so-called long wave approximation) allowing to neglect vertical acceleration and to get the predominance of the so called hydrostatic balance enable to derive the classical shallow water system of equations. 
Notably, the Shallow water system is an instance of a 2D compressible fluid dynamics model, driven by a divergent velocity arising from an along depth averaging of an incompressible 3D velocity field. As such it involves a compressible version of the continuity equation on the water surface height combined to a 2D momentum equation expressed through the balance between the momentum material derivative and 2D forces.
The full details of such derivation can be found in classical geophysical fluid dynamics textbook, e.g.  \cite{Vallis}. In the LU stochastic case, a derivation similar to the derivation of the shallow water equation can be followed. However, the amplitude of the small-scale random component must be appropriately scaled. For example, we cannot have a random component whose amplitude is much bigger than the velocity variable. This is in particular important for the vertical component where a wrong choice of the noise may destroy the hydrostatic balance.
As described in \cite{Brecht-2021-JAMES}, for noises with a small enough ratio between the vertical and horizontal components and Rossby number lower than one (i.e the effect of planetary motion  and hence the Coriolis correction is important and cannot be neglected) the LU stochastic representation of the shallow water equations (LU-RSW) for a flat topography reads:
 
\begin{subequations}\label{seq:RSWLU}
	\begin{align}
		&{(\text{\em Conservation of momentum})}\nonumber\\
		&\Df_t u + f \times u\, \dt = - g \grad h\, \dt, \label{eq:RSWLU-moment}\\
		&{(\text{\em Conservation of mass})}\nonumber\\
		&\Df_t h + h \div u\, \dt = 0, \label{eq:RSWLU-mass}\\
		&{(\text{\em  Random pressure / noise balance})}\nonumber\\
		&f \times \sdbt = - \frac{1}{\rho} \grad \dpt, \\
		&{(\text{\em  Incompressible constraints})}\nonumber\\
		&\div \sdbt = 0,\ \quad \div u_s = 0. \label{eq:RSWLU-incomp}
	\end{align}
\end{subequations}
In the above system the gradient operator involved has to be understood as a 2D horizontal gradient. The random balance provides a relation between the martingale pressure term and the  noise. It has the same form as the geostrophic balance but is related here to the time uncorrelated part of the pressure. It can be shown that this stochastic system conserves path-wise the global energy (see appendix A of \cite{Brecht-2021-JAMES}): 
\begin{equation} \label{eq:RSWLU-energy} 
	\df_t \int_{\Omega} \frac{\rho}{2} \big( h |\vec{u}|^2 + g h^2 \big)\, \df \vec{x} = 0.
\end{equation}
The LU-RSW shares thus exactly the same energy conservation property as the deterministic one. Beyond their formal resemblance this energy conservation property provides a strong physical link between the two systems. \\

 The objective of the paper is to present the mathematical analysis of this system. We answer the following questions: Is it well posed? What kind of solutions does it admits? Are they global in time or local? This is a set of prime classical questions that one can naturally ask about the new class of stochastic PDEs.
 
 Before describing further the type of mathematical results that have been obtained we need to make some important remarks. The LU-RSW defined above corresponds to a large-scale stochastic representation of a simplified ocean dynamics. Numerical implementation of this system requires to work on grid of finite resolution and to introduce sub-grid diffusion terms to prevent the pilling up of enstrophy at the resolution cutoff scale by cascading effects driven by transport equations. To counterbalanced this artefact, numerical dissipation needs to be artificially introduced. This will be done by viscosity terms, $\nu\Delta u$ and $\eta\Delta h$, in the momentum  and surface elevation equations, respectively. Within a LU interpretation, these two Laplacian diffusion terms can be thought as emanating from a subgrid unresolved isotropic Brownian velocity variables for which only the associated diffusion is represented. This corresponds indeed to the simplest subgrid eddy-diffusivity models. Such Laplacian diffusion can be extended to a general diffusion term of the form $\nabla\bcdot (a_{sg} \nabla h)$ together with a corresponding Ito-Stokes drift $u'_{s}= -1/2 \nabla\bcdot a_{sg}$ for non homogeneous unresolved (subgrid) noise.
 Nevertheless, in order to conserve mass,  periodic or specific boundary conditions need to be imposed on this unresolved noise term or on $h$: $\nabla\bcdot u'_{sg}=0$, $h u'_{sg}\bcdot n|_{\partial \Omega} =0$, $(a_{sg}\nabla h) \bcdot n |_{\partial \Omega} =0$, with $n$ the normal to the domain.
 
 In the following, we start by re-stating the system in an abstract form: \\

\noindent\textbf{A general Class of SPDEs} \\

\noindent We consider the following class of SPDEs
\begin{equation}\label{gencl}
\begin{aligned}
& du_t + \left(\nabla \cdot (u_t \otimes u_t) + \alpha u_t (\nabla \cdot u_t) - u_t^S\cdot \nabla u_t + f \times u_t + g\nabla h_t\right) dt + \sigma \cdot \nabla u_t dB_t = \nu \Delta u_tdt + \frac{1}{2} \nabla \cdot (a\nabla u_t)dt  \\
& dh_t + \left(\nabla \cdot (h_tu_t) + \beta h_t ( \nabla \cdot u_t) -u_t^S \cdot \nabla h_t \right) dt + \sigma \cdot \nabla h_t dB_t =  \eta \Delta h_tdt + \frac{1}{2}\nabla \cdot (a\nabla h_t)dt
\end{aligned}
\end{equation}
with $\alpha, \beta\in \mathbb{R}$ and, where  
\begin{itemize}
\item $u^S$ represents the It\^{o}-Stokes drift, $u^S:= \frac{1}{2} \nabla \cdot a$	
\item $\sigma$ is the correlation operator, with $\nabla \cdot \sigma dB_t = 0$
\item $a$ corresponds to the diagonal component of the covariance, $a = \sigma\sigma^T$. 
\end{itemize}
The stochastic viscous LU-SRSW model can by obtain by choosing $\alpha=-1$ and  $\beta=0$. The model can be written as follows: 
\begin{subequations}\label{eq:mainlu-noncompactform}
\begin{alignat}{2}
    & du + \mathcal{\widetilde{L}}_u u dt + fk\times udt + g\nabla h dt + \mathcal{L}_{\sigma}u dB_t = \nu \Delta udt + \frac{1}{2}\mathcal{L}_{\sigma}^2 u dt \\
    & dh + \mathcal{\widetilde{L}}_u h dt +\mathcal{L}_{\sigma}h dB_t = \eta \Delta hdt + \frac{1}{2}\mathcal{L}_{\sigma}^2 h dt.
\end{alignat}    
\end{subequations}
where
\begin{equation*}
    \footnotesize{\mathcal{L}_u u:= u \cdot \nabla u, \ \ \ \ \  \mathcal{\widetilde{L}}_u u := \mathcal{L}_u u - \mathcal{L}_{u^S} u, \ \ \ \ \ \mathcal{L}_{\sigma} u := \sigma \cdot \nabla u,}
\end{equation*}
\begin{equation*}
	\footnotesize{\mathcal{L}_{\sigma}^2 u := \nabla \cdot (a\nabla u)} 
\end{equation*}

In this paper, we prove that the system \eqref{gencl} admits local (in time) pathwise strong solutions continuous in $\mathcal{W}^{k,2}$-norm and square integrable in $\mathcal{W}^{k+1,2}$-norm. Moreover, we show the existence of a unique maximal solution.
In addition,
for the particular case $\alpha=\beta=-\frac{1}{2}$, we prove that
equation \eqref{gencl} has a global (in time) weak solution. 

In the following we will use the composite notation $X_t = (u_t, h_t)$. Then the LU-SRSW model \eqref{gencl} can be written in a more compact form: 
\begin{equation}\label{eq:mainlu}
dX_t + \mathcal{A}(X_t)dt + \mathcal{G}(X_t)dB_t = \frac{1}{2}\nabla \cdot (a \nabla X_t) + \gamma\Delta X_t dt
\end{equation}
where $\gamma =(\nu,\eta)$, $\delta = (\alpha,\beta)$ and
\begin{equation*}
\mathcal{A}\left( 
\begin{array}{c}
u \\ 
h%
\end{array}
\right) =\left( 
\begin{array}{c}
\nabla \cdot (u \otimes u) + \alpha u (\nabla \cdot u) - u^S\cdot \nabla u + f \times u + g\nabla h  \\ 
\nabla \cdot (hu) + \beta h ( \nabla \cdot u) -u^S \cdot \nabla h
\end{array}
\right)
\end{equation*}
and 
\begin{equation*}
\mathcal{G}\left( 
\begin{array}{c}
u \\ 
h%
\end{array}%
\right) =\left( 
\begin{array}{c}
\sigma \cdot \nabla u\\ 
\sigma \cdot \nabla h%
\end{array}%
\right) 
\end{equation*}%
are two \textit{deterministic} operators defined on $\mathcal{W}^{k,2}(\mathbb{T}^2)$ with values in $L^2(\mathbb{T}^2)$. Nonetheless, $\mathcal{A}(X_s)$ and $\mathcal{G}(X_s)$  need to be progressively measurable stochastic processes. Note that although it is enough to impose that $\mathcal{A}(X_s)$ is measurable in order for the integral $\int_0^t\mathcal{A}(X_s)ds$ to be well-defined, we need it to be also adapted in order to make sure that the stopped process $\mathcal{A}(X_{t\wedge\tau})$ is yet measurable. Therefore, $\mathcal{A}(X_s)$ is progressively measurable.
Note that the two diffusion coefficients $\nu$ and $\eta$ are not related to each other i.e. one of them can be very small  and the other one very big.
\begin{remark}\label{l1}
Note that if $a$ is constant, then the term $\frac{1}{2}\nabla \cdot (a\nabla X_t)$ generates a Laplacian. However, this operator does not add to the difussivity of the equation, as it is required to make the transfer from the Stratonovich noise to the It\^o  noise. This is the reason why it cannot be used in the proof of well-posedness, and why we still need to add a proper diffusion operator to the original LU-SRSW model. 
\end{remark}
\begin{remark}
Note also that one can write $(\mathcal{L}_u + \mathcal{D}_u)u = \nabla \cdot (u\otimes u)$. We prefer to use the notations introduced above though, as they better highlight the introduction of stochasticity in the advective term, which is essential for the physical properties of the LU models. Nonetheless, we sometimes use below also the second notation. 
\end{remark}
\begin{remark}
In the same spirit as the Remark \ref{l1} on the supplementary Laplacian diffusion in (\ref{eq:mainlu-noncompactform}a) and (\ref{eq:mainlu-noncompactform}b), the additional compressibility term $\alpha \mathcal{D}_u\Theta$,  (for $\Theta= u$ or $\Theta=h$) can also be interpreted in terms of an unresolved noise of a specific form. In that case we seek a noise covariation $a_\Theta$ such that:
\[
- \frac{1}{2} (\div a_\Theta)\bcdot \nabla \Theta - \frac{1}{2}\nab\bcdot(a_\Theta \nabla \Theta) = \alpha \Theta \nabla \bcdot u + \epsilon_\Theta \Delta \Theta.  
\]
We will see that imposing such form for the unresolved noise with $\alpha =1/2$ allows us to extend a local convergence result to a global one.  Let us note however, that despite an immediate mathematical interest, such model raises nevertheless immediate physical questions as without an additional global constraint, mass conservation cannot be ensured. This deficiency limits in practice long simulation run of the modified system.      
\end{remark}
\noindent\textbf{Literature review} \\
The deterministic nonlinear shallow water equations (also known as the \textit{Saint-Venant equations}) have been extensively studied in the literature. A significant difficulty in the well-posedness analysis of this model is generated by the interplay between its intrinsic nonlinearities, in the absence of any incompressibility conditions. In order to counterbalance the resulting chaotic effects, a viscous higher-order term is usually added to the inviscid system. Various shallow water models have been introduced in \cite{Zeitlinbook}, \cite{BreschsDesjardinsMetivier}. In \cite{LiuYin2} the authors show global existence and local well-posedness for the 2D viscous shallow-water system in the Sobolev space $H^{s-\alpha}(\mathbb{R}^2) \times H^s(\mathbb{R}^2)$ with $s>1$ and $\alpha \in [0,1)$. The methodology is based on Littlewood-Paley approximations and Bony paraproduct decompositions. This extends the result in \cite{LiuYin1} where local solutions for any initial data and global solutions for small initial data  have been obtained in $H^s \times H^s$ with $s>1$. A similar result adapted to Besov spaces was obtained in \cite{LiuYin0}. More recently, ill-posedness for the two-dimensional shallow water equations in critical Besov spaces has been shown in \cite{LiHongZhu}. Existence of global weak solutions and convergence to the strong solution of the viscous quasi-geostrophic equation, on the two-dimensional torus is shown in \cite{BreschsDesjardins1}. In \cite{BreschsDesjardins2} the authors construct a sequence of smooth approximate solutions for the shalow water model obtained in \cite{BreschsDesjardins1}. The approximated system is proven to be globally well-posed, with height bounded away from zero. Global existence of weak solutions is then obtained using the stability arguments from \cite{BreschsDesjardins1}. Sundbye in \cite{Sundbye} obtains global existence and uniqueness of strong solutions for the initial-boundary-value problem with Dirichlet boundary conditions and small forcing and initial data. In this work the solution is shown to be classical for a strictly positive time and a $C^0$ decay rate is provided. The proof is based on a priori energy estimates. Independently, Kloeden has shown in \cite{Kloeden} that the Dirichlet problem admits a global unique and spatially periodic classical solution. Both \cite{Sundbye} and \cite{Kloeden} are based on the energy method developed by Matsumura and Nishida in \cite{Matsumura}. Local existence and uniqueness of classical solutions for the Dirichlet problem associated with the non-rotating viscous shallow water model with initial conditions $(u_0,h_0) \in C^{2, \alpha} \times C^{1, \alpha}$ can be found in \cite{Bui}. The proof is based on the method of successive approximations and H\"{o}lder space estimates, in a Lagrangian framework. Existence and uniqueness of solutions for the two-dimensional viscous shallow water system under minimal regularity assumptions for the initial data and with height bounded away from zero was proven in \cite{ChenMiaoZhang}. 
The possibly stabilising effects of the rotation in the inviscid case is analysed in \cite{rotRSW1} and \cite{rotRSW2}. 
In \cite{CL3} the authors have shown existence and uniqueness of a local strong solution for a stochastic rotating shallow water model derived using the so-called Stochastic Advection by Lie Transport (SALT) approach. 

\subsection{Contributions of the paper}

Our paper presents some of the very first analytical results obtained for LU models. We obtain two qualitatively different sets of results.

\begin{itemize}
    \item We show that equation (\ref{eq:mainlu}) admits local (in time) strong solutions (both in analytical sense as well as in probabilistic sense). More precisely we show there exists a pair $(X,\tau)$, where $\tau$ is a strictly positive stopping time and $X(\cdot\wedge\tau)$ is a predictable process such that, for $T>0$,
\begin{equation*}
   X(\cdot\wedge\tau)\in L^2\left( \Omega, C\left([0,T];\mathcal{W}^{k,2}(\mathbb{T}^2)^3\right) \right)
\end{equation*}
\begin{equation*}
   X\mathbbm{1}_{t\leq\tau} \in L^2\left( \Omega, L^2\left(0, T;\mathcal{W}^{k+1,2}(\mathbb{T}^2)^3\right)\right). 
\end{equation*}

Moreover, we show the existence of a unique \emph{maximal} solution $(X, \mathcal{T})$, in the sense of Definition \ref{solutionssrsw}c, see Section \ref{maximalsolution}. 

As opposed to \cite{CL3}, in this paper we construct the approximation sequence using Littlewood-Paley mollifiers. This is particularly suitable when looking at the ``energy dynamics'', the Littlewood-Paley decomposition allowing for a mathematically rigorous characterisation of the concept of \textit{turbulence} from physics. 
More precisely, the energy flux through a 3D sphere of radius $N=2^j$, in the Fourier (frequency) space, for an incompressible fluid, is given by
\begin{equation*}
    \Gamma_N(u):= \displaystyle\int_{\mathbb{T}^3} \mathcal{J}_N (u \otimes u) \cdot \nabla \mathcal{J}_N u dx
\end{equation*}
where $u$ is the velocity of the fluid and $\mathcal{J}_N u$ is the projection of $u$ onto its Fourier modes which have modulus smaller than $N$ (see the Appendix for details). In \cite{Friedlander} it has been shown explicitly that using this decomposition, the energy flux through the sphere of radius $N$ is controlled mainly by frequencies of order $N$:
\begin{equation*}
    |\Gamma_{N=2^j}(u)| \leq C\displaystyle\sum_{i=1}^{\infty}2^{-2/3|j-i|}2^i\|\tilde{\mathcal{J}}_{2^i} u\|_{L^3}^3
\end{equation*}
where $\tilde{\mathcal{J}}_{2^i}:= \mathcal{J}_{2^{i+1}} - \mathcal{J}_{2^i}$ and $\tilde{\mathcal{J}}_{2^i}$ corresponds to the $i$-th Littlewood-Paley piece of $u$. As highlighted in \cite{Friedlander}, this shows that the inter-scale energy transfer is controlled primarily by local interactions. 
This is more of interest when we do not have the operator $\mathcal{D}_u$ in the model (and the fluid is still compressible), since in this case the cancellations above do not hold automatically and one needs to pay extra attention to the way energy dissipates at different scales. In our case though, for the approximating sequence, the flux of energy is automatically zero at each dyadic scale.

Using the Littlewood-Paley mollifiers,  we succeed in showing directly that the sequence of approximating solutions is Cauchy when $k$ is large enough ($k\geq 2$), therefore there is no need to use a tightness argument. The reason is that we now have enough regularity in order for the truncation function to converge properly. Nonetheless, for $k<2$ we need tightness arguments as in this case the approximation does not contain a truncation and this does not allow us to prove the Cauchy property straightaway. Note that our approach for constructing a strong solution is different from the one used in \cite{Ziane} or \cite{VicolHoltz} in the sense that although the solution belongs to a higher order Sobolev space, it suffices to prove the Cauchy property in $L^2$. The advantage is that we do not need to first show the existence of a smoother solution using some extra tightness arguments (as in \cite{VicolHoltz} for instance). Also, the calculations are simpler in $L^2$. Nonetheless, the uniform estimates corresponding to the approximating solution are proven in the higher order Sobolev space of existence. As compared to \cite{Ziane}, we use a truncation function instead of working with stopping times only. This keeps the proof shorter overall, as we do not need to show separately an almost sure convergence of the corresponding stopping times.

\item For the particular case $\alpha=\beta=-1/2$, we show that the system
equation (\ref{eq:mainlu}) admits a global analytically weak but probabilistically strong solution $X$ (see Definition \ref{solutionssrsw}d) which belongs to the space
\begin{equation*}
   X(\cdot\wedge\tau)\in L^2\left( \Omega, C\left([0,T];L^{2}(\mathbb{T}^2)^3\right) \right)
\end{equation*}
\begin{equation*}
   X\mathbbm{1}_{t\leq\tau} \in L^2\left( \Omega, L^2\left(0, T;\mathcal{W}^{1,2}(\mathbb{T}^2)^3\right)\right)\ \ \ \hbox{for} \ \ T > 0.  
\end{equation*}

The global character of the solution is due to a couple of term-cancellations which are specific for this model and which are directly linked with the conservation of energy in $L^2(\mathbb{T}^2)$. More precisely, we formally have that 

\begin{equation}\label{cancel1}
\langle X_t, u_t \cdot \nabla X_t\rangle + \frac{1}{2}\langle X_t, (\nabla \cdot u_t)X_t\rangle= 0.
\end{equation}
and
\begin{equation}\label{cancel2}
   \langle X_t, u_t^{S} \cdot \nabla X_t \rangle + \langle X_t, \mathcal{L}_{\sigma}^2(X_t)\rangle + \langle \mathcal{G}(X_t), \mathcal{G}(X_t)\rangle = 0.
\end{equation}
Note that in the proofs below we use these cancellation properties corresponding to the approximating solution $X^N$, instead of $X$. This is due to the fact that in the original equation we do not have enough regularity to apply the It\^{o} formula directly, so we need to work with $X^N$ first and then $X$ ``inherits'' the required properties. The cancellations \eqref{cancel1} and \eqref{cancel2} do not hold in higher order Sobolev spaces $\mathcal{W}^{k,2}(\mathbb{T}^2)$ with $k\geq 1$ and therefore the solution is only local (in the sense of Definition \ref{solutionssrsw}a). 

These properties are essential to ensure the existing of global solution (albeit weak), as we work with a compressible system, and it is known that compressibility is a strong assuption which usually creates many technical difficulties. However, through this special structure, the LU-SRSW model retains the important features of a compressible fluid, while still allowing for a clean mathematical analysis.

We are able to prove strong continuity in time of the solution due to the fact that we work in a two-dimensional space. The key ingredient is Ladyzhenskaya's inequality, which in 3D has a slightly different form and therefore the solution would live in $C_w([0,T];L^2(\mathbb{T}^2))$ only in that case. In 2D, weak continuity in time would not be sufficient for proving the global bound \eqref{originalestim} from \eqref{estimapprox1} for the reason that we could not pass to the limit in the strong topology of $C([0,T];L^2(\mathbb{T}^2))$ without imposing further conditions on the solution (i.e. such that suitable interpolation results can be used). 

\end{itemize}

\noindent \textbf{Structure of the paper.} In Section \ref{preliminaries} we introduce some notations, definitions of solutions, and preliminary results. In Section \ref{mainresults} we introduce the main results and we prove the existence of a maximal solution. In section \ref{sect:globalweak} we prove the existence of a global weak solution for a particular system which belongs to the class \ref{gencl}. In Section \ref{uniqueness} we show that the local strong solution is pathwise unique. In Section \ref{sect:localstrong} the existence of a local strong solution is proven. In the Appendix we give details about the Littlewood-Paley projection, the a priori estimates, and the Cauchy property. 
\section{Preliminaries and notations}\label{preliminaries}
\begin{itemize}
\item We denote by $\mathbb{T}^{2}=\mathbb{R}^{2}/\mathbb{Z}^{2}$ the two-dimensional
torus. Let $H$ be a Hilbert space on $\mathbb{T}^2$, endowed with an orthonormal basis $(e_{j})_{j\in\mathbb{N}}$, and denote its dual by $H^{\star}$. In our case $H=L^2(\mathbb{T}^2)$ for the construction of the weak solution and $H=\mathcal{W}^{k,2}$ with $k\geq 1$ for the construction of the strong solution. 
\item Let $(\Omega, \mathcal{F}, (\mathcal{F}_t)_{t\geq 0}, \mathbb{P})$ be a filtered probability space and $(B^i)_{i \in \mathbb{N}}$ a sequence of independent one-dimensional Brownian motions adapted to the complete and right-continuous filtration $(\mathcal{F}_t)_{t\geq 0}$.  That is, $\mathcal{S}: = (\Omega, \mathcal{F}, (\mathcal{F}_t)_{t\geq 0}, \mathbb{P}, (B^i)_{i\in\mathbb{N}})$ is a fixed \textit{stochastic basis}.

\item $C([0,T]; H)$ is the space of continuous functions $f:[0,T]\rightarrow H$ that is $f(t_j)\rightarrow f(t_0)$ in $H$ for any sequence $(t_j)_j \subset [0,T]$ such that $t_j \rightarrow t_0$.
\item $C_w([0,T]; H)$ is the space of weakly continuous functions $f:[0,T]\rightarrow H$ that is $f(t_j)\rightharpoonup f(t_0)$ in $H$  for any sequence $(t_j)_j \subset [0,T]$ such that $t_j \rightarrow t_0$, where $\rightharpoonup$ denotes the convergence with respect to the weak norm topology. This is equivalent to saying that the functions $t\rightarrow \langle \varphi, f(t)\rangle_{H^{\star}\times H}$ belong to the space $C([0,T])$ for any $\varphi \in H^{\star}$. A sequence $(f_n)_n$ converges to $f$ in $C_w([0,T]; H)$ if $\displaystyle\sup_{t\in[0,T]} |\langle \varphi, f_n \rangle_{H^{\star} \times H} - \langle \varphi, f \rangle_{H^{\star} \times H}| \rightarrow 0 $ for all $\varphi \in H^{\star}$.

\item Let  $\beta \in (0,1), p\in [2,\infty)$ and let $H$ be a Hilbert space. Then the fractional Sobolev space $\mathcal{W}^{\beta,p}(0,T; H)$ is endowed with the norm 
\begin{equation*}
\|f\|_{\mathcal{W}^{\beta, p}(0, T ; H)}^p:=\int_{0}^{T}\left\|f_{t}\right\|_{H}^{p} d t+\int_{0}^{T} \int_{0}^{T} \frac{\left\|f_{t}-f_{s}\right\|_{H}^{p}}{|t-s|^{1+\beta p}} d t d s.
\end{equation*} 
In our case $H=\mathcal{W}^{-1,2}(\mathbb{T}^2)$ for the weak solution and $H=L^2(\mathbb{T}^2)$ for the strong solution.
\item Let $f\in C\left([0,T];\mathcal{W}^{k,2}(\mathbb{T}^2)\right) \cap L^2\left(0,T; \mathcal{W}^{k+1,2}(\mathbb{T}^2)\right)$ with $k \geq 0$. We introduce the the space $\mathcal{S}_{T,p,(k,2)}$ of such functions, endowed with the norm
\begin{equation*}
    \|f\|_{\mathcal{S}_{T,p,(k,2)}} := \displaystyle\sup_{t\in[0,T]}\|f_t\|_{k,2}^p + \left(\displaystyle\int_0^T \|f_t\|_{k+1,2}^2 dt\right)^{p/2} \ \ \ \hbox{for any} \ \ p > 0. 
\end{equation*}
\item $z\times u := (-u^2, u^1)$ for a two-dimensional vector $u=(u^1,u^2)$ and a unit vector $z$. 
\item $C$ is a generic constant and can differ from line to line.
\end{itemize}

\begin{definition}\label{solutionssrsw} 
Let $\mathcal{S} = \left(\Omega, \mathcal{F}, (\mathcal{F}_t), \mathbb{P}, (B_t)_t\right)$ be a fixed stochastic basis and $k\geq 0$. 
\begin{itemize}
	\item[a.] A pathwise \underline{local solution} of the LU-SRSW system is given
by a pair $(X,\tau )$ where $\tau :\Omega \rightarrow \lbrack 0,\infty ]$ is
a strictly positive bounded stopping time and $X:\Omega \times
\lbrack 0,\infty )\times \mathbb{T}^{2}\rightarrow \mathbb{R}^{3}$ is such that $%
X_{\cdot\wedge \tau }$ is $\mathcal{F}%
_{t}$-adapted for any $t\geq 0$, with
\begin{equation*}
   X_{\cdot\wedge\tau}\in L^2\left( \Omega, C\left([0,T];\mathcal{W}^{k,2}(\mathbb{T}^2)^3\right) \right)
\end{equation*}
\begin{equation*}
   X\mathbbm{1}_{t\leq\tau} \in L^2\left( \Omega, L^2\left(0, T;\mathcal{W}^{k+1,2}(\mathbb{T}^2)^3\right)\right) 
\end{equation*}
and the LU-SRSW system is satisfied locally i.e.%
\begin{equation*}
X_{t\wedge \tau }=X_{0}+\int_{0}^{_{t\wedge \tau }}\mathcal{A}(X_{s})
ds+\int_{0}^{_{t\wedge \tau }}\mathcal{G}(X_{s}) dB_{s}+\frac{1}{2}\displaystyle\int_0^{t\wedge\tau}\nabla \cdot (a \nabla X_s)ds+\gamma \int_{0}^{_{t\wedge \tau }}\Delta X_{s}ds
\end{equation*}%
holds $\mathbb{P}$-almost surely, as an identity in $L^{2}(\mathbb{T}^{2})^{3}.$

\item[b.] If $\tau =\infty $ then the solution is called 
\underline{global}.

\item[c.] A pathwise \underline{maximal solution} of the SRSW system is given by a pair $(X, \mathcal{T})$ where $\mathcal{T}: \Omega \rightarrow [0, \infty]$ is a non-negative stopping time and   
$X: \Omega \times [0, \mathcal{T}) \times \mathbb{T}^2 \rightarrow \mathbb{R}^3$ is a process for which there exists an increasing sequence of stopping times $(\tau^n)_n$ with the following properties: 

\begin{itemize}
\item[i.] $\mathcal{T}=\lim_{n\rightarrow \infty }\tau ^{n}$ and $\mathbb{P}(%
\mathcal{T}>0)=1$
\item[ii.] $(X,\tau ^{n})$ is a pathwise local solution of the LU-SRSW system
for every $n\in \mathbb{N}$
\item[iii.] if $\mathcal{T}<\infty $ then 
\begin{equation*}
\displaystyle\limsup_{t \rightarrow \mathcal{T}} \| X_{t}\|_{k,2}=\infty .
\end{equation*}
\end{itemize}

\item[d.] A \underline{global weak (in PDE sense) solution} of the LU-SRSW system is given by an $(\mathcal{F}_t)_t$-adapted process $X:\Omega \times [0,\infty) \times \mathbb{T}^2 \rightarrow \mathbb{R}^3$ which belongs to the space
\begin{equation*}
    L^2\left( \Omega, C\left([0,T];L^2(\mathbb{T}^2)^3 \right)\right) \bigcap L^2\left( \Omega, L^2\left(0,T;\mathcal{W}^{1,2}(\mathbb{T}^2)^3 \right)\right)
\end{equation*}
and such that the following identity holds for any test function $\varphi \in \mathcal{W}^{1,2}(\mathbb{T}^2)$: 
\begin{equation*}
    \langle X_t, \varphi\rangle = \langle X_0,\varphi\rangle + \displaystyle\int_0^t \langle X_s,\mathcal{A}^{\star}\varphi\rangle ds + \displaystyle\int_0^t \langle X_s,\mathcal{G}^{\star}\varphi \rangle dB_s - \frac{1}{2}\displaystyle\int_0^t \langle  a\nabla X_s, \nabla \varphi \rangle ds - \gamma\displaystyle\int_0^t \langle \nabla X_s, \nabla \varphi\rangle ds
\end{equation*}
where $\langle ,\rangle $denotes the inner product in $L^2(\mathbb{T}^2)^3$ and $\mathcal{A}^{\star}, \mathcal{G}^{\star}$ are the dual operators corresponding to $\mathcal{A}$ and $\mathcal{G}$ respectively. 
\end{itemize}
\end{definition}
\vspace{2mm}
\noindent\textbf{Assumptions on the stochastic fields:} We impose the following condition on the correlation operator $\sigma$:
\begin{equation*}
    \|\sigma\|_{k+1,\infty} < \infty.
\end{equation*}
This condition ensures that for any $f\in\mathcal{W}^{2,2}(\mathbb{T}^2)$ there exist some constants $C$ such that
\begin{equation*}
\|\sigma \cdot \nabla f\|_2^2 \leq C\|f\|_{1,2}^2    
\end{equation*}
\begin{equation*}
    \|\nabla \cdot (a\nabla f)\|_2^2 \leq C \|f\|_{2,2}^2
\end{equation*}
where $a = \sigma \sigma^{T}$.
\section{Main results}\label{mainresults}

\noindent In order to show that there exists a solution for the rotating shallow water model \eqref{eq:mainlu} we introduce the following (smooth) truncation function
$f_R: \mathbb{R}_{+} \rightarrow [0,1]$ equal to $1$ on $[0,R]$,
equal to $0$ on $[R+1, \infty)$, and decreasing on $[R, R+1]$, with 
$f_{R}(X_{t}):=f_{R}\left( \|X_t\|_{k,2}\right)$,
where 
$\|X_t\|_{k,2}:=\|u_t\|_{k,2}+\|h_{t}\|_{k,2}$
for arbitrary $R>0$ and $k\geq 0$. 
When $k=0$ the truncation is not needed due to the cancellation properties described above, and for this reason we consider that in this case $f_R \equiv 1$.
When $k > 0$, the truncation function enables us to construct a global strong solution for a truncated version of the original model \eqref{eq:mainlu}. Then we show that the truncation can be lifted up to a positive stopping time. Moreover, the solution is shown to be maximal.
The following two theorems are the main results of the paper. 
\begin{theorem}
\label{mainthmlu} 
Let $\mathcal{S} = (\Omega, \mathcal{F}, (\mathcal{F}_t)_t, 
\mathbb{P}, (B_t)_t)$ be a fixed stochastic basis and $X_0\in \mathcal{W}^{k,2}(\mathbb{T}^2)^3$. Then the stochastic rotating shallow water system %
\eqref{eq:mainlu} admits a unique pathwise maximal
strong solution $(X,\mathcal{T})$ such that, for $k\geq 1$,
\begin{equation*}
   X_{\cdot\wedge\tau_n}\in L^2\left( \Omega, C\left([0,T];\mathcal{W}^{k,2}(\mathbb{T}^2)^3\right) \right)
\end{equation*}
\begin{equation*}
   X\mathbbm{1}_{t\leq\tau_n} \in L^2\left( \Omega, L^2\left(0, T;\mathcal{W}^{k+1,2}(\mathbb{T}^2)^3\right)\right) 
\end{equation*}
for any stopping time $\tau_n$ such that $(X, \tau_n)$ is a local solution and $\displaystyle\lim_{n\rightarrow \infty} \tau_n = \mathcal{T}$.
\end{theorem}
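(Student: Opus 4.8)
The plan is to obtain the maximal solution as the limit of a doubly-indexed family of regularised problems, following the truncation-plus-mollification strategy described in the introduction. First I would fix a cut-off level $R>0$ and a Littlewood--Paley cut-off $N=2^j$, and replace the nonlinearity by $f_R(X)\,\mathcal{J}_N\mathcal{A}(\mathcal{J}_N X)$ and the transport noise by $\mathcal{J}_N\mathcal{G}(\mathcal{J}_N X)$ (the diffusion and It\^o-correction terms being projected as well), so that the equation becomes an SDE posed on the finite-dimensional range of $\mathcal{J}_N$. On that space the coefficients are smooth and, thanks to the cut-off $f_R$, globally Lipschitz with linear growth; the classical theory of finite-dimensional SDEs then yields a unique global strong solution $X^{N,R}$ with continuous, $(\mathcal{F}_t)$-adapted paths.

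The second and decisive step is a uniform-in-$N$ a priori estimate in $\mathcal{W}^{k,2}$. I would apply It\^o's formula to $\|X_t^{N,R}\|_{k,2}^2$, and then to its $p$-th power to reach the $L^2(\Omega;\cdot)$ integrability required by Definition \ref{solutionssrsw}a. The genuine viscous terms $\nu\Delta u$ and $\eta\Delta h$ supply a coercive $-c\|X^{N,R}\|_{k+1,2}^2$ contribution; the compressible quadratic terms in $\mathcal{A}$ are estimated by interpolation (Gagliardo--Nirenberg / Ladyzhenskaya in $2$D) and absorbed into this coercive term up to a factor $C_R\|X^{N,R}\|_{k,2}^2$ coming from the truncation. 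The delicate point is the transport noise: its quadratic variation contributes $\tfrac12\|\mathcal{J}_N\mathcal{L}_\sigma(\mathcal{J}_N X)\|_{k,2}^2$, which must be matched against the It\^o--Stokes drift and the correction $\tfrac12\langle X,\mathcal{L}_\sigma^2 X\rangle_{k,2}$ arising from $\tfrac12\nabla\cdot(a\nabla X)$ (recall Remark \ref{l1}: this term adds no net diffusivity). In $L^2$ these cancel exactly by \eqref{cancel1}--\eqref{cancel2}; in $\mathcal{W}^{k,2}$ with $k\ge1$ they do not, and one is left with commutators of the form $[\partial^\gamma,\sigma\cdot\nabla]$ which, under the standing assumption $\|\sigma\|_{k+1,\infty}<\infty$, are bounded by $C\|X^{N,R}\|_{k,2}^2$. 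Gr\"onwall's lemma, together with the Burkholder--Davis--Gundy inequality applied to the martingale part $\langle X,\mathcal{G}(X)\rangle_{k,2}\,dB$, then produces bounds on $\|X^{N,R}\|_{L^2(\Omega;C([0,T];\mathcal{W}^{k,2}))}$ and $\|X^{N,R}\|_{L^2(\Omega;L^2(0,T;\mathcal{W}^{k+1,2}))}$ that are independent of $N$.

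Next I would pass to the limit $N\to\infty$. Writing the equation for the difference $X^{N,R}-X^{M,R}$ and applying It\^o's formula to its $L^2$-norm, the uniform higher-order bounds from the previous step allow me to close a Gr\"onwall estimate at the level of $L^2$ alone; this is precisely where $k\ge2$ enters, since the extra regularity renders the truncated nonlinearity and the projected noise Lipschitz in $L^2$ uniformly in $N$. Hence $(X^{N,R})_N$ is Cauchy in $L^2(\Omega;C([0,T];L^2(\mathbb{T}^2)^3))$, and the limit $X^R$ inherits the uniform $\mathcal{W}^{k,2}$ and $\mathcal{W}^{k+1,2}$ bounds by weak lower-semicontinuity and solves the $R$-truncated system. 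For $0<k<2$ one instead establishes tightness of the laws and invokes a Skorokhod / Gy\"ongy--Krylov argument, as indicated in the contributions section.

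Finally I would remove the truncation and assemble the maximal solution. Setting $\tau^R:=\inf\{t\ge0:\|X_t^R\|_{k,2}\ge R\}\wedge T$, on $[0,\tau^R]$ one has $f_R(X^R)\equiv1$, so $(X^R,\tau^R)$ is a pathwise local solution in the sense of Definition \ref{solutionssrsw}a. Pathwise uniqueness (proved in Section \ref{uniqueness}) guarantees that the solutions built from different cut-off levels agree on common time intervals, so the $\tau^R$ form an increasing sequence; defining $\mathcal{T}:=\lim_R\tau^R$ and patching the consistent pieces yields a process $X$ on $[0,\mathcal{T})$ with $\mathbb{P}(\mathcal{T}>0)=1$. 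The blow-up alternative (iii) then follows by contradiction: were $\mathcal{T}<\infty$ while $\limsup_{t\to\mathcal{T}}\|X_t\|_{k,2}<\infty$ on a set of positive probability, the uniform bounds would permit the solution to be restarted past $\mathcal{T}$, contradicting maximality. I expect the principal obstacle to be the a priori estimate of the second step: controlling the transport noise together with its It\^o correction in $\mathcal{W}^{k,2}$, since the exact $L^2$ cancellation \eqref{cancel1}--\eqref{cancel2} degrades into commutator terms that must be tamed using the regularity of $\sigma$, all while handling the compressible nonlinearity with no divergence-free structure to exploit.
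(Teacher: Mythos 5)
Your proposal follows the paper's own strategy almost step for step: finite-dimensional Littlewood--Paley projection combined with the truncation $f_R$, uniform-in-$N$ a priori estimates in $\mathcal{W}^{k,2}$ via It\^o's formula, Gr\"onwall and Burkholder--Davis--Gundy (the paper's Lemma \ref{le:approxslnestim}), the Cauchy property established only in the $L^2$-norm while borrowing the higher-order uniform bounds (Proposition \ref{lemma:cauchy}, with tightness reserved for low $k$), the lifting of the truncation by the hitting time $\tau^R$ (Proposition \ref{prop:truncated}), and the patching of consistent local solutions into a maximal one with $\mathcal{T}=\lim_R\tau^R$. Your description of the noise/It\^o-correction cancellation degrading into commutators controlled by $\|\sigma\|_{k+1,\infty}$ is exactly the mechanism exploited in Appendix B, and your blow-up argument by continuation/restart is a legitimate alternative to the paper's direct hitting-time computation ($\|X_{\tau^n}\|_{k,2}=n\to\infty$).

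There are, however, two genuine omissions relative to what the theorem asserts. First, you never prove \emph{uniqueness of the maximal solution}: pathwise uniqueness of the truncated system (Section \ref{uniqueness}) gives consistency across cut-off levels, which you use inside the construction, but the theorem also requires that any competing maximal solution $(\bar X,\bar{\mathcal{T}})$ satisfies $\bar X = X$ and, crucially, $\bar{\mathcal{T}}=\mathcal{T}$ almost surely. The equality of lifetimes does not follow from local uniqueness alone; the paper proves it (Proposition \ref{prop:maxsollu}) by splitting $\{\mathcal{T}\neq\bar{\mathcal{T}}\}$ into the symmetric events $\Xi^1,\Xi^2$ and deriving a contradiction between the finiteness of $\sup_{s\le\bar\tau^n}\|\bar X_s\|_{k,2}$ for a local solution and the blow-up $\sup_{s<\mathcal{T}}\|X_s\|_{k,2}=\infty$ forced at the maximal time. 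Second, your limiting procedure produces a process continuous in $L^2$ and merely bounded in $\mathcal{W}^{k,2}$; to place $X$ in $C([0,T];\mathcal{W}^{k,2}(\mathbb{T}^2)^3)$, as both Definition \ref{solutionssrsw}a and the theorem require, one must upgrade weak continuity to strong continuity. The paper does this in a separate step (Section \ref{normcontinuity}) by showing that $t\mapsto\|X_t\|_{k,2}$ is continuous via fourth-moment increment estimates; without this step, or an equivalent argument, your solution only lands in $C_w([0,T];\mathcal{W}^{k,2})$ and the claimed solution space is not reached.
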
 

\begin{theorem}\label{thmlu2}
Let $\mathcal{S} = (\Omega, \mathcal{F}, (\mathcal{F}_t)_t, 
\mathbb{P}, (B_t)_t)$ be a fixed stochastic basis and $X_0\in L^2(\mathbb{T}^2)^3$. Let $\alpha=\beta=-\frac{1}{2}$ in \eqref{gencl}. Then the LU stochastic rotating shallow water system \eqref{eq:mainlu} admits an $L^2$-valued continuous global weak solution such that
\begin{equation}\label{originalestim}
    \displaystyle\sup_{s\leq T}\|X_s\|_2^2 + \displaystyle\int_0^T \|X_s\|_{1,2}^2 ds \leq \|X_0\|_2^2 \ \ \ \ \ \ \mathbb{P}-a.s.
\end{equation}
\end{theorem}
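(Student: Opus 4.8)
The plan is to build the solution through a Littlewood--Paley Galerkin scheme and to exploit the energy cancellations \eqref{cancel1}--\eqref{cancel2}, which are available precisely because $\alpha=\beta=-\tfrac12$, to obtain a \emph{pathwise} a priori bound uniform in the approximation parameter. Concretely, for $N=2^j$ I would let $X^N=(u^N,h^N)$ solve the finite-dimensional It\^o SDE obtained by projecting \eqref{eq:mainlu} with $\mathcal{J}_N$ onto the Fourier modes of modulus below $N$; since we are in the case $k=0$ the truncation plays no role ($f_R\equiv1$), so this is a genuinely unmodified projected system. Because $X^N$ is band-limited, $\mathcal{J}_N X^N=X^N$ and the projections are invisible in the energy balance, so applying It\^o's formula to $\|X^N_t\|_2^2$ and using \eqref{cancel1}--\eqref{cancel2} kills every transport, It\^o--Stokes and quadratic-variation contribution; moreover the martingale $\int_0^t\langle X^N_s,\mathcal{G}(X^N_s)\rangle\,dB_s$ vanishes identically, since $\langle X,\sigma\cdot\nabla X\rangle=\tfrac12\int_{\mathbb{T}^2}\sigma\cdot\nabla|X|^2=0$ by $\nabla\cdot\sigma=0$. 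This leaves the clean identity $\|X^N_t\|_2^2+2\gamma\int_0^t\|\nabla X^N_s\|_2^2\,ds=\|X^N_0\|_2^2$, which both rules out blow-up (hence $X^N$ is global) and furnishes the uniform bound underlying \eqref{originalestim}.

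Next I would assemble the compactness ingredients. The energy identity gives uniform bounds for $X^N$ in $L^2(\Omega;L^\infty(0,T;L^2))$ and in $L^2(\Omega;L^2(0,T;\mathcal{W}^{1,2}))$; a fractional time-regularity bound in $\mathcal{W}^{\beta,p}(0,T;\mathcal{W}^{-1,2})$ follows by estimating the increments $X^N_t-X^N_s$ through the equation, controlling the drift $\mathcal{A}(X^N)$ in $\mathcal{W}^{-1,2}$ by the $L^2$ energy (the quadratic term loses one derivative but is rescued by the negative index) and the stochastic integral by Burkholder--Davis--Gundy. By the Aubin--Lions--Simon lemma the intersection $L^2(0,T;\mathcal{W}^{1,2})\cap\mathcal{W}^{\beta,p}(0,T;\mathcal{W}^{-1,2})$ embeds compactly into $L^2(0,T;L^2)$, and together with the $L^\infty_tL^2$ bound this makes the laws of $(X^N,B)$ tight on $L^2(0,T;L^2)\cap C([0,T];\mathcal{W}^{-1,2})$. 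Prokhorov and the Skorokhod representation theorem then provide, on a new basis, a.s.-convergent copies $\tilde X^N\to\tilde X$ (strongly in $L^2_{t,x}$, weakly-$*$ in the energy spaces) and $\tilde B^N\to\tilde B$.

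The core step is passing to the limit in the weak formulation of Definition \ref{solutionssrsw}d. The only genuinely nonlinear objects are the compressible advection terms $\nabla\cdot(\tilde u^N\otimes \tilde u^N)$ and $\nabla\cdot(\tilde h^N\tilde u^N)$, tested against $\mathcal{W}^{1,2}$ functions; here the strong $L^2(0,T;L^2)$ convergence delivered by Aubin--Lions is exactly what identifies the limit of the product, since weak convergence alone would not suffice. The stochastic term is handled by the standard stochastic-integral convergence lemma, using the a.s. convergence of $\tilde X^N$ and $\tilde B^N$ and the uniform bounds, while $\mathcal{J}_N\to\mathrm{Id}$ strongly disposes of the projections; the bound \eqref{originalestim} for the limit is inherited from the approximation by weak lower semicontinuity of the norms. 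I expect the main obstacle to lie not in this identification but in upgrading the time regularity from weak to strong: the generic outcome is $\tilde X\in C_w([0,T];L^2)$, and promoting this to the asserted $\tilde X\in C([0,T];L^2)$ is where being in two dimensions is essential. Ladyzhenskaya's inequality $\|f\|_{L^4}^2\le C\|f\|_{L^2}\|\nabla f\|_{L^2}$ controls the nonlinearity sharply enough to show the $L^2$-norm is continuous, and not merely lower semicontinuous, in time — a step that would fail with the 3D form of the inequality.

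Finally, to obtain a solution on the original stochastic basis (probabilistically strong) rather than merely on the Skorokhod space, I would establish pathwise uniqueness for the $L^2$ weak solution — again leaning on the 2D Ladyzhenskaya estimate to close the energy estimate for the difference of two solutions — and then invoke the Gy\"ongy--Krylov characterisation: tightness of $(X^N)$ together with uniqueness of the limit forces the original sequence to converge in probability on $(\Omega,\mathcal{F},\mathbb{P})$, yielding an $(\mathcal{F}_t)$-adapted solution. The delicate points to monitor are the adaptedness bookkeeping across the Skorokhod transfer and the verification that uniqueness genuinely holds in the low-regularity $L^2$ class despite compressibility.
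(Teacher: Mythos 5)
Your overall strategy coincides with the paper's: a Littlewood--Paley projected system with $f_R\equiv 1$ (no truncation needed when $k=0$), pathwise energy bounds exploiting the cancellations \eqref{cancel1}--\eqref{cancel2} and the vanishing of the noise term in the energy balance, a fractional $\mathcal{W}^{\beta,p}(0,T;\mathcal{W}^{-1,2})$ increment estimate for tightness, Skorokhod representation, identification of the nonlinear limits through 2D Ladyzhenskaya-type interpolation, the upgrade from $C_w([0,T];L^2)$ to $C([0,T];L^2)$ via continuity of the norm, and finally pathwise uniqueness to recover a probabilistically strong solution. (You invoke Gy\"ongy--Krylov for this last step where the paper combines the martingale solution with pathwise uniqueness in a Yamada--Watanabe fashion; both are standard and rest on the same uniqueness estimate, which the paper proves in Section \ref{uniqueness} with the weight $\|Z\|=(\|X^1\|_2^2+\|X^2\|_2^2)^2$.)

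There is, however, one concrete error. The ``clean identity'' $\|X^N_t\|_2^2+2\gamma\int_0^t\|\nabla X^N_s\|_2^2\,ds=\|X^N_0\|_2^2$ is false. The cancellations \eqref{cancel1}--\eqref{cancel2} do remove the transport, compressibility, It\^o--Stokes and quadratic-variation contributions, and the Coriolis and martingale terms vanish (the latter exactly as you argue, since $\nabla\cdot\sigma=0$), but the gravity coupling $g\langle u^N_s,\nabla h^N_s\rangle$ coming from $g\nabla h$ in the momentum equation has no counterpart in the $h$-equation at the level of the $L^2$ energy: it only cancels for the physical energy $\int_{\mathbb{T}^2}\big(h|u|^2+gh^2\big)$, not for $\|u\|_2^2+\|h\|_2^2$. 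The paper's Theorem \ref{globalapprox1} keeps precisely this term and absorbs it into the dissipation, e.g.
\begin{equation*}
2g\,|\langle u^N_s,\nabla h^N_s\rangle| \;\le\; \gamma\|\nabla h^N_s\|_2^2+\frac{g^2}{\gamma}\|u^N_s\|_2^2,
\end{equation*}
followed by Gronwall, which is why the uniform bound there reads $\le C\|X_0\|_2^2$ rather than holding with constant $1$. This does not damage anything downstream --- the resulting bound is still pathwise and uniform in $N$, so globality of $X^N$, tightness, the limit passage and \eqref{originalestim} (with a constant) all survive --- but the exact identity you state, and the constant-one estimate it would imply, must be replaced by this inequality.
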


\section{Proof of Theorem \ref{mainthmlu} - Existence of a maximal strong solution}

In this section we prove Theorem \ref{mainthmlu} that is we show that the system \eqref{gencl} admits a maximal strong solution. We first construct a local solution using a truncated Littlewood-Paley approximation and we then show that the resulting solution is actually maximal. 

\subsection{Construction of a local strong solution}\label{sect:localstrong}

We recall in the Appendix the basic properties of the Littlewood-Paley projections.
Here $X^{N,R}=(u^{N,R}, h^{N,R})$ where $u^{N,R}$ and $h^{N,R}$ are solutions of the truncated approximating system, that is a truncated version of \eqref{gencl}. 
\begin{equation}\label{truncatedu}
      \begin{aligned}
    & du^{N,R}  +  \mathcal{J}_N(fk \times  \mathcal{J}_Nu^{N,R} )dt +  f_R(X^{N,R})\left(\mathcal{J}_{N}(\mathcal{{L}}_{\mathcal{J}_{N}u^{N,R}} + (1+\alpha)\mathcal{D}_{\mathcal{J}_Nu^{N,R}})\mathcal{J}_{N}u^{N,R}  \right)dt\\
      & +\mathcal{J}_{N}(\mathcal{{L}}_{\mathcal{J}_{N}u^{S,N,R}} \mathcal{J}_{N}u^{N,R}) + g\mathcal{J}_{N}(\nabla \mathcal{J}_{N}h^{N,R}) dt + \mathcal{J}_{N}\left(\mathcal{L}_{\sigma}\mathcal{J}_{N}u^{N,R}\right)dB_t  \\
      & = \nu \Delta u^{N,R} dt + \frac{1}{2}\mathcal{J}_{N}\left(\mathcal{L}_{\sigma}^2\mathcal{J}_{N}u^{N,R}\right)dt 
      \end{aligned}  
\end{equation}
\begin{equation}\label{truncatedh}   
\begin{aligned}
 dh^{N,R} +  & f_R(X^{N,R})\left( \mathcal{J}_{N} (\mathcal{{L}}_{\mathcal{J}_{N} u^{N,R}} + (1+\beta)\mathcal{D}_{\mathcal{J}_{N}u^{N,R}})\mathcal{J}_{N}h^{N,R} ) \right)dt\\
    & + \mathcal{J}_{N}(\mathcal{{L}}_{\mathcal{J}_{N}u^{S,N,R}} \mathcal{J}_{N}h^{N,R})+ \mathcal{J}_{N}\left(\mathcal{L}_{\sigma}\mathcal{J}_{N}h^{N,R}\right)dB_t \\
    & = \eta \Delta h^{N,R}dt + \frac{1}{2}\mathcal{J}_{N}\left(\mathcal{L}_{\sigma}^2\mathcal{J}_{N}h^{N,R}\right)dt.
   \end{aligned}
\end{equation}
\begin{remark}
Note that in the above equations,  the set of Brownian motions driving each element of the sequence $X^{N,R}$ are the same, hence the absence of the superscript $N$ in their notation. This helps the analysis of the Cauchy property. If we choose a different set of Brownian motions for each element of the sequence, then we will need to assume their convergence in a norm that will imply the convergence of the stochastic integrals. 
\end{remark}
This approximating system is well-posed by a straightforward extension of Theorem 1 Chapter 3 in \cite{Skorokhod} to the case when the systems is driven by a countable set of Brownian motions.
The truncation function $f_R: \mathbb{R}_{+} \rightarrow [0,1]$ is chosen  to be $1$ on $[0,R]$,  $0$ on $[R+1, \infty)$, and decreasing on $[R, R+1]$, 
$f_{R}(X_{t}):=f_{R}\left( \|X_t\|_{k,2}\right)$
where 
$\|X_t\|_{k,2}:=\|u_t\|_{k,2}+\|h_{t}\|_{k,2}$
for arbitrary $R>0$ and $k\geq 1$. Similarly, $f_R(u_t):=f_R(\|u_t\|_{k,2})$ and $f_R(h_t):=f_R(\|h_t\|_{k,2})$.  
To shorten the formulae, we will use the following abbreviated notation 
\begin{equation*}
    X^{N,R}:= \check{X}^N, \ \ \ u^{N,R}:= \check{u}^N, \ \ \ h^{N,R}:= \check{h}^N,
\end{equation*}
and then the approximating truncated system can be rewritten in the following more compact form: 
\begin{equation}\label{compactapproxstrong}
    d\check{X}_t^{N} + \check{\mathcal{A}}^{N}(\check{X}_t^{N})dt + \check{\mathcal{G}}^{N}(\check{X}_t^{N})dB_t = \gamma\Delta \check{X}_t^{N}
\end{equation}
where 
\begin{equation*}
\begin{aligned}
&\check{\mathcal{A}}^{N}(\check{X}^{N}) := 
&\left( 
\begin{array}{c}
 (\check{\mathcal{A}}^{N}(\check{u}^N))^1\\ 
(\check{\mathcal{A}}^{N}(\check{h}^N))^2
\end{array}
\right)
\end{aligned}
\end{equation*}
with
\begin{equation*}
\begin{aligned}
    (\check{\mathcal{A}}^{N}(\check{u}^N))^1 &:= \mathcal{J}_N(fk\times \mathcal{J}_Nu^{N,R})+ f_R(u^{N,R})\left(\mathcal{J}_N (\mathcal{{L}}_{\mathcal{J}_Nu^{N,R}}  + (1+\alpha)\mathcal{D}_{\mathcal{J}_Nu^{N,R}} )\mathcal{J}_Nu^{N,R}\right)\\
    &+ \mathcal{J}_N(\mathcal{{L}}_{\mathcal{J}_Nu^{S,N,R}}\mathcal{J}_Nu^{N,R})+ g\mathcal{J}_N\nabla (\mathcal{J}_Nh^{N,R}) -\frac{1}{2} \mathcal{J}_{N}\left(\mathcal{L}_{\sigma}^2\mathcal{J}_{N}u^{N,R}\right) 
\end{aligned}
\end{equation*}
\begin{equation*}
    \begin{aligned}
    (\check{\mathcal{A}}^{N}(\check{h}^N))^2 & := f_R(h^{N,R})\left(\mathcal{J}_N(\mathcal{{L}}_{\mathcal{J}_Nu^{N,R}}  + (1+\beta)\mathcal{D}_{\mathcal{J}_Nu^{N,R}})\mathcal{J}_Nh^{N,R} \right)\\
    & + \mathcal{J}_N(\mathcal{{L}}_{\mathcal{J}_Nu^{S,N,R}}\mathcal{J}_Nh^{N,R})-\frac{1}{2} \mathcal{J}_{N}\left(\mathcal{L}_{\sigma}^2\mathcal{J}_{N}h^{N,R}\right)
\end{aligned}
\end{equation*}
and 
\begin{equation*}
\check{\mathcal{G}}^{N}(\ckX^{N})
:=\left( 
\begin{array}{c}
\mathcal{J}_N(\cL_{\sigma} \mathcal{J}_Nu^{N,R}) \\ 
\mathcal{J}_N(\cL_{\sigma} \mathcal{J}_Nh^{N,R})%
\end{array}%
\right). 
\end{equation*}%
We show below that the approximating truncated system \eqref{compactapproxstrong}
converges to a truncated form of the original system \eqref{eq:mainlu}
which is given by
\begin{equation}\label{eq:truncatedlu}
    d\check{X}_t + \check{\mathcal{A}}(\check{X}_t)dt + \check{\mathcal{G}}(\check{X}_t)dB_t = \gamma\Delta \check{X}_t
\end{equation}
where 
\begin{equation*}
\begin{aligned}
&\check{\mathcal{A}}(\check{X}) := 
&\left( 
\begin{array}{c}
 (\check{\mathcal{A}}(\check{u}))^1\\ 
(\check{\mathcal{A}}(\check{h}))^2
\end{array}
\right)
\end{aligned}
\end{equation*}
with
\begin{equation*}
\begin{aligned}
    (\check{\mathcal{A}}(\check{u}))^1 &:= fk\times u^{R}+ f_R(u^{R})\left((\mathcal{{L}}_{u^{R}}  + (1+\alpha)\mathcal{D}_{u^{R}} )u^{R}\right) + (\mathcal{{L}}_{u^{S,R}}u^{R})+ g\nabla (h^{R}) -\frac{1}{2} \mathcal{L}_{\sigma}^2u^{R}
\end{aligned}
\end{equation*}
\begin{equation*}
    \begin{aligned}
    (\check{\mathcal{A}}(\check{h}))^2 & := f_R(h^{R})\left((\mathcal{{L}}_{u^{R}}  + (1+\beta)\mathcal{D}_{u^{R}})h^{R} \right) + (\mathcal{{L}}_{u^{S,R}}h^{R})-\frac{1}{2}\mathcal{L}_{\sigma}^2h^{R}
\end{aligned}
\end{equation*}
and 
\begin{equation*}
\check{\mathcal{G}}(\ckX)
:=\left( 
\begin{array}{c}
\cL_{\sigma} u^{R} \\ 
\cL_{\sigma} h^{R}
\end{array}%
\right). 
\end{equation*}%


\begin{theorem}\label{mainthm:strongsln}
Assume that for any $T>0$
\begin{equation}\label{cauchyrel}
\displaystyle\lim_{N\rightarrow \infty}\displaystyle\sup_{M\geq N}\mathbb{E}\left[\displaystyle\sup_{t\in [0, T]}\|\check{X}_t^N - \check{X}_t^M\|_2^p\right] +\mathbb{E}\left[\left(\int_{0}^{T}\|\check{X}_s^N - \check{X}_s^M\|_{1,2}^2ds\right)^{p/2}\right]= 0
\end{equation}
and there exists a constant $C=C(R)$ independent of $N$ such that 
\begin{equation}\label{approxslnestim}
    \displaystyle\sup_{N \geq 1}\mathbb{E}\left[ \displaystyle\sup_{t\in[0,T]}\|\check{X}_t^N\|_{k,2}^p + \left(\displaystyle\int_0^t \|\check{X}_s^N\|_{k+1,2}^2ds\right)^{p/2}\right ] \leq C
\end{equation}
for any $p>0$ and $k\geq 1$.
Then there exists a predictable and progressively measurable process $\check{X}$ with paths in the space  
$$C([0,T]; \cW^{k,2}(\mathbb{T}^2)) \bigcap L^2(0,T; \mathcal{W}^{k+1,2}(\mathbb{T}^2)) $$
satisfying 
\begin{equation}\label{origslnestim}
  \mathbb{E}\left[ \displaystyle\sup_{t\in[0,T]}\|\check{X}_t\|_{k,2}^p + \left(\displaystyle\int_0^t \|\check{X}_s\|_{k+1,2}^2ds\right)^{p/2}\right ]  \leq C.
\end{equation}
such that 
\begin{equation}\label{ckhXlim}
    \displaystyle\lim_{N\rightarrow\infty}\mathbb{E}\left[\displaystyle \sup_{t\in [0,T]}\|\ckX^N _t- \ckX_t\|_{2}^p\right] +
\displaystyle\lim_{N\rightarrow\infty}\mathbb{E}\left[\left(\displaystyle \int_0^T\|\ckX^N_t- \ckX_t\|_{1,2}^2dt\right)^{p/2}\right] = 0.
\end{equation}
\end{theorem}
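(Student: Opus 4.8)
The plan is to read the two hypotheses as statements in two nested Banach spaces, to manufacture the limit from the coarse one, and to harvest its regularity from the fine one. Introduce the ``low'' space $\mathbb{X}_0 := L^p(\Omega;C([0,T];L^2)) \cap L^p(\Omega;L^2(0,T;\mathcal{W}^{1,2}))$, equipped with the norm whose $p$-th power is the sum of the two expectations appearing in \eqref{cauchyrel}, and the ``high'' space $\mathbb{X}_1 := L^p(\Omega;L^\infty(0,T;\mathcal{W}^{k,2})) \cap L^p(\Omega;L^2(0,T;\mathcal{W}^{k+1,2}))$. Condition \eqref{cauchyrel} says exactly that $(\check{X}^N)$ is Cauchy in $\mathbb{X}_0$; since $\mathbb{X}_0$ is complete there is a limit $\check{X}\in\mathbb{X}_0$, and this convergence is precisely \eqref{ckhXlim}. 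Passing to a subsequence I may also assume $\check{X}^{N}\to\check{X}$ almost surely in $C([0,T];L^2)$.

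Next I would upgrade the regularity using \eqref{approxslnestim}, which states that $(\check{X}^N)$ is bounded in $\mathbb{X}_1$. For $p\geq 2$ the space $L^p(\Omega;L^2(0,T;\mathcal{W}^{k+1,2}))$ is reflexive (a Bochner $L^p$ of a separable Hilbert space), so a subsequence converges weakly to some $Y$; since $\mathcal{W}^{k+1,2}\hookrightarrow L^2$, this is also a weak limit in $L^2(\Omega\times(0,T);L^2)$, where the strong limit is already $\check{X}$, forcing $Y=\check{X}$. Weak lower semicontinuity of the norm then yields the $\mathcal{W}^{k+1,2}$-part of \eqref{origslnestim} for $p\geq 2$, and Jensen's inequality on $(\Omega,\mathbb{P})$ delivers it for $0<p<2$. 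For the $\sup_t\|\cdot\|_{k,2}$-part I would argue pathwise: on the full-measure set where $\check{X}^N(\omega)\to\check{X}(\omega)$ in $C([0,T];L^2)$, fix $t$; whenever $L(\omega):=\liminf_N\sup_s\|\check{X}^N_s(\omega)\|_{k,2}$ is finite, a subsequence of $\check{X}^N_t(\omega)$ is bounded in $\mathcal{W}^{k,2}$ and converges weakly, necessarily to $\check{X}_t(\omega)$, so $\|\check{X}_t(\omega)\|_{k,2}\leq L(\omega)$; taking the supremum over $t$ and then Fatou over $\Omega$ gives $\mathbb{E}[\sup_t\|\check{X}_t\|_{k,2}^p]\leq C$ for every $p>0$. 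This sidesteps the awkward duality of vector-valued $L^\infty$, establishes \eqref{origslnestim}, and places $\check{X}$ in $\mathbb{X}_1$.

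Progressive measurability and predictability of $\check{X}$ then follow because each $\check{X}^N$, being the continuous adapted solution of \eqref{compactapproxstrong}, is predictable, and the almost sure uniform-in-time limit of predictable processes is again predictable. It remains to produce continuous $\mathcal{W}^{k,2}$-paths. For any $k'<k$, Gagliardo--Nirenberg interpolation $\|v\|_{k',2}\leq C\|v\|_2^{1-k'/k}\|v\|_{k,2}^{k'/k}$ combined with the $\mathcal{W}^{k,2}$-bound just established and the $L^2$-continuity of $\check{X}$ shows $\check{X}\in C([0,T];\mathcal{W}^{k',2})$; this, together with the uniform $\mathcal{W}^{k,2}$-bound, gives weak continuity $\check{X}\in C_w([0,T];\mathcal{W}^{k,2})$ by the standard Lions--Magenes/Strauss lemma.

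The main obstacle is promoting this weak continuity to strong continuity in $\mathcal{W}^{k,2}$ at the endpoint $k$, which interpolation alone cannot reach. Since $\mathcal{W}^{k,2}$ is Hilbert, it suffices to show that $t\mapsto\|\check{X}_t\|_{k,2}$ is almost surely continuous, as weak continuity together with norm continuity implies strong continuity. I would extract this from a stochastic energy balance in $\mathcal{W}^{k,2}$: applying the It\^o formula to $\|\check{X}^N_t\|_{k,2}^2$ for the finite-dimensional system \eqref{compactapproxstrong}, then passing to the limit via \eqref{ckhXlim} and the uniform bound \eqref{origslnestim} while identifying the limiting drift and martingale terms, one finds that $\|\check{X}_t\|_{k,2}^2$ is a continuous semimartingale, so $t\mapsto\|\check{X}_t\|_{k,2}$ is continuous almost surely. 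This is the delicate step, since it requires controlling the nonlinear terms in the higher norm and justifying the limit in the It\^o correction; it is precisely here that the truncation $f_R$ and the uniform estimate \eqref{approxslnestim} are indispensable.
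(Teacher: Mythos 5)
Your construction of the limit, the uniform bound, measurability, interpolation, and weak continuity are all sound and essentially reproduce the paper's route: the limit $\check{X}$ is extracted from the Cauchy property \eqref{cauchyrel}, the bound \eqref{origslnestim} comes from Fatou/lower semicontinuity (the paper simply invokes ``Fatou's lemma''; your pathwise weak-limit argument for the $\sup_t$ term is a more careful rendering of the same step), and your interpolation between the $L^2$-convergence and the $\mathcal{W}^{k,2}$-bound is exactly the paper's \eqref{hc}, i.e.\ convergence in $C([0,T];\mathcal{W}^{k-\epsilon,2}(\mathbb{T}^2))$. One structural difference to flag: you never pass to the limit in the equation. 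The second half of the paper's proof is devoted to showing that $\check{X}$ satisfies the truncated system \eqref{eq:truncatedlu} (convergence of the truncated nonlinear terms, using the decomposition \eqref{decomp}); the theorem as literally stated does not demand this, but that identification is what makes Theorem \ref{mainthm:strongsln} usable in Proposition \ref{prop:truncated}, so in context the omission matters.

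The genuine gap is in your final step, the promotion of weak to strong continuity in $\mathcal{W}^{k,2}$. You propose to pass to the limit in the It\^o expansion of $\|\check{X}^N_t\|_{k,2}^2$ and identify $\|\check{X}_t\|_{k,2}^2$ as a continuous semimartingale. With the convergences actually available this does not go through: at fixed $t$ you only have $\check{X}^N_t\rightharpoonup\check{X}_t$ weakly in $\mathcal{W}^{k,2}$ (strongly only in $\mathcal{W}^{k-\epsilon,2}$), so $\|\check{X}^N_t\|_{k,2}^2$ need not converge to $\|\check{X}_t\|_{k,2}^2$; lower semicontinuity alone leaves open that your limiting semimartingale strictly dominates $\|\check{X}_t\|_{k,2}^2$ (an energy defect). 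Likewise, the drift in the It\^o expansion contains the top-order terms $2\gamma\|\check{X}^N_s\|_{k+1,2}^2$ and the It\^o correction $\langle\partial^{\theta}\check{\mathcal{G}}^N(\check{X}^N_s),\partial^{\theta}\check{\mathcal{G}}^N(\check{X}^N_s)\rangle$, which converge only weakly, so ``identifying the limiting drift and martingale terms'' is precisely what weak convergence forbids. The paper sidesteps all of this in Proposition \ref{prop:normcont}: the It\^o formula at level $N$ is used only to establish the uniform increment bound $\mathbb{E}\left[\left(\|\check{X}^N_t\|_{k,2}^2-\|\check{X}^N_s\|_{k,2}^2\right)^4\right]\leq C|t-s|^2$ with $C$ independent of $N$, this bound is transferred to the limit process, and Kolmogorov's continuity criterion then yields continuity of $t\mapsto\|\check{X}_t\|_{k,2}^2$ with no term-by-term identification whatsoever. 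To close your argument you should either replace the semimartingale step by such an increment-moment/Kolmogorov argument, or first prove strong $\mathcal{W}^{k,2}$-convergence of $\check{X}^N_t$ at fixed times; as written, the last step fails.
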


\noindent The complete proofs for \eqref{cauchyrel} and \eqref{approxslnestim} are provided in Appendix. 

\begin{proof}[\textbf{Proof of Theorem \ref{mainthm:strongsln}}.]
The existence of the limiting process $\ckX$ follows immediately from Cauchy property \eqref{cauchyrel}. The bound 
(\ref{origslnestim}) follows from Fatou's lemma. 
From (\ref{cauchyrel}), (\ref{approxslnestim}), (\ref{origslnestim}) we can deduce, by using a standard interpolation argument, that for any $\epsilon\in (0,1)$, we have that
\begin{equation}\label{hc}
    \displaystyle\lim_{N\rightarrow\infty}\mathbb{E}\left[\displaystyle\sup_{t\in[0,T]}\|\ckX_t^N-\ckX_t\|_{k-\epsilon,2}^p\right] =0 
\end{equation}
that is the convergence of $(\ckX^N)_N$ holds in $\mathcal{W}^{k-\epsilon,2}(\mathbb{T}^2)$. We use this result to pass to the limit in the approximating equation. The convergence of all the linear term is immediate so we oly show teh convergence of the nonlinear terms. Explicitly, $\ckX^N=(\cku^N,\ckh^N)$ satisfies the system given by \eqref{truncatedu}-\eqref{truncatedh}. The convergence of the linear terms is straightforward, hence we write in more detail the convergence of the nonlinear terms. We want to show that
    \begin{equation}\label{conv}
        \begin{aligned}
            \displaystyle\int_0^t\langle f_R(\ckX^{N})\left(\mathcal{J}_{N}(\mathcal{{L}}_{\mathcal{J}_{N}\cku^{N}} + (1+\delta)\mathcal{D}_{\mathcal{J}_N\cku^{N}})\mathcal{J}_{N}\ckX^{N}\right) - f_R(\ckX) (\mathcal{{L}}_{\cku} + (1+\delta)\mathcal{D}_{\cku})\ckX, \varphi \rangle ds \xrightarrow{N\rightarrow\infty} 0
        \end{aligned}
    \end{equation}
in $L^2(\mathbb{P})$. For the advective term we will use that\footnote{Note that we can write the approximating term without the projection operator $\mathcal{J}_N$ in front of it}
\begin{equation}\label{decomp}
    \begin{aligned}
        |\langle f_R(\ckX^{N})\mathcal{{L}}_{\cku^{N}} \ckX^{N} - f_R(\ckX) \mathcal{{L}}_{\cku} \ckX, \varphi \rangle|& \leq f_R(\ckX^N)|\langle (\cku^N-\cku)\cdot \nabla\ckX^N,\varphi\rangle| \\
        & + f_R(X)|\langle \cku \cdot \nabla(\ckX^N-\ckX),\varphi\rangle| \\
        & + |f_R(\ckX^N)-f_R(\ckX)||\langle \cku^N \cdot \nabla \ckX^N, \varphi\rangle|.
    \end{aligned}
\end{equation}
and show the convergence of each of the three terms on the right hand side of the above inequality. The arguments are identical to that contained in  \cite{CL3} pp. 14-15, where we use that fact that we have also convergence in $\mathcal{W}^{k-\epsilon,2}(\mathbb{T}^2)$ and therefore we do not need to move derivatives onto the test functions $\varphi$. For the divergence term in \eqref{conv} we can use a decomposition similar to \eqref{decomp} to obtain that\footnote{Observe that, as opposed to \cite{CL3} here we have convergence directly in strong probabilistic sense, that is in the probability space $\Omega$. This means that we do not need an extra probability space and a Yamada-Watanabe argument. This is due to the Cauchy property \eqref{cauchyrel} and the uniform a priory estimates \eqref{approxslnestim}.}
\begin{equation*}
    \begin{aligned}
       \displaystyle\int_0^t\langle f_R(\ckX^{N})\left( (1+\delta)\mathcal{D}_{\mathcal{J}_N\cku^{N}})\ckX^{N}\right) - f_R(\ckX) ( (1+\delta)\mathcal{D}_{\cku})\ckX, \varphi \rangle ds \xrightarrow{N\rightarrow\infty} 0 \ \ \ \ \hbox{in} \ \ \ L^2(\mathbb{P}). 
    \end{aligned}
\end{equation*}

We show in Section \ref{normcontinuity} that the $\|\cdot\|_{k,2}$ norm is continuous, that is $\|\check{X}\|_{k,2} \in C([0,T]; \mathbb{R})$. Then
$\check{X} \in C([0,T]; \cW^{k,2}(\mathbb{T}^2))$.
\end{proof}
\begin{corollary}
There exists a stopping time $\tau$ and a predictable and progressively measurable process $$X(\cdot) \in C([0,\tau]; \mathcal{W}^{k,2}(\mathbb{T}^2)) \bigcap L^2(0,\tau; \mathcal{W}^{k+1,2}(\mathbb{T}^2)) $$ such that for any $p>0$ and $k\geq 1$
\begin{equation}\label{origslnestimlast}
  \mathbb{E}\left[ \displaystyle\sup_{t\in[0,\tau]}\|X_t\|_{k,2}^p + \left(\displaystyle\int_0^{\tau} \|X_t\|_{k+1,2}^2dt\right)^{p/2}\right ]  \leq C.
\end{equation}
and for any $\epsilon>0$ 
\begin{equation}
    \mathbb{E}[\displaystyle\sup_{t\in[0,\tau]}\|X_t^N-X_t\|_{k-\epsilon,2}^2] \xrightarrow[N\rightarrow\infty]{} 0. 
\end{equation}
\end{corollary}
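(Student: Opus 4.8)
The plan is to obtain the untruncated local solution $X$ by \emph{switching off} the truncation, exploiting the fact that the process $\check{X}$ produced in Theorem \ref{mainthm:strongsln} already solves the original system \eqref{eq:mainlu} on any time interval where the factor $f_R(\check{X})$ equals $1$, i.e. where $\|\check{X}_t\|_{k,2} \leq R$. Since $\check{X}$ solves the truncated system \eqref{eq:truncatedlu}, the only difference between \eqref{eq:truncatedlu} and \eqref{eq:mainlu} is the presence of the cutoff $f_R(\check{X})$ in front of the nonlinear terms, so the entire task reduces to producing a positive time up to which this cutoff is inactive.

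Concretely, I would first fix $R > \|X_0\|_{k,2}$ and define
\begin{equation*}
\tau := \inf\{t \in [0,T] : \|\check{X}_t\|_{k,2} \geq R\} \wedge T,
\end{equation*}
the exit time of $\check{X}$ from the closed ball of radius $R$ in $\mathcal{W}^{k,2}(\mathbb{T}^2)^3$. Because $\check{X}$ is adapted and $t \mapsto \|\check{X}_t\|_{k,2}$ is continuous (this is precisely the norm-continuity established in Section \ref{normcontinuity} and invoked at the end of the proof of Theorem \ref{mainthm:strongsln}), the hitting time $\tau$ of a closed set by a continuous adapted process is a genuine $(\mathcal{F}_t)_t$-stopping time, and the stopped process $\check{X}_{\cdot\wedge\tau}$ inherits the predictability and progressive measurability of $\check{X}$. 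I would then set $X := \check{X}$ on $[0,\tau]$. For $t < \tau$ one has $\|\check{X}_t\|_{k,2} \leq R$, hence $f_R(\check{X}_t) = 1$; therefore every integral $\int_0^{t\wedge\tau} f_R(\check{X}_s)(\cdots)\,ds$ in \eqref{eq:truncatedlu} coincides with its untruncated counterpart, and $X$ satisfies \eqref{eq:mainlu} on $[0,\tau]$ as an identity in $L^2(\mathbb{T}^2)^3$.

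The two quantitative assertions then follow by restriction. Since $\tau \leq T$ and the relevant integrand is nonnegative, $\sup_{t\in[0,\tau]}\|X_t\|_{k,2}^p \leq \sup_{t\in[0,T]}\|\check{X}_t\|_{k,2}^p$ and $\int_0^{\tau}\|X_s\|_{k+1,2}^2\,ds \leq \int_0^{T}\|\check{X}_s\|_{k+1,2}^2\,ds$, so taking expectations in \eqref{origslnestim} yields \eqref{origslnestimlast} with the same constant $C=C(R)$. Likewise the convergence $\mathbb{E}[\sup_{t\in[0,\tau]}\|X_t^N - X_t\|_{k-\epsilon,2}^2] \to 0$ is just the restriction to $[0,\tau]$ of the interpolation limit \eqref{hc}, which already holds on all of $[0,T]$.

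The only genuinely delicate point is showing $\tau > 0$ almost surely and that the truncation is inactive up to $\tau$. This rests entirely on the pathwise continuity of the $\mathcal{W}^{k,2}$-norm together with the strict inequality $\|X_0\|_{k,2} < R$: since $\check{X}_0 = X_0$, continuity guarantees that for almost every $\omega$ the norm stays strictly below $R$ on a nonempty interval $[0,\delta(\omega))$, whence $\tau(\omega) > 0$. Everything else is a direct transfer of facts already established in Theorem \ref{mainthm:strongsln}, so no new estimates are required; the uniqueness and the consistency of these local solutions across different choices of $R$ (needed later to glue them into a maximal solution) are deferred to Section \ref{uniqueness}.
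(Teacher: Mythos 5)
Your proposal is correct and is essentially the argument the paper intends: the corollary is left as an immediate consequence of Theorem \ref{mainthm:strongsln}, obtained exactly as you do by taking $\tau$ to be the exit time of $\check{X}$ from the ball of radius $R>\|X_0\|_{k,2}$ (a genuine stopping time by the norm continuity of Section \ref{normcontinuity}), on which $f_R(\check{X})\equiv 1$ so the truncated and original systems coincide, with \eqref{origslnestimlast} and the convergence following by restriction of \eqref{origslnestim} and \eqref{hc} to $[0,\tau]$. The consistency with Proposition \ref{prop:truncated} (the restricted pair being a local solution) is handled exactly as you defer it.
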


The following proposition is immediate. 
\begin{proposition}\label{prop:truncated}
Let $X_0=X_0^R \in  \cW^{k,2}(\T^2)^3$ ($k\geq 1$), $X^R : \Omega \times [0, \infty) \times \T^2 \rightarrow \mathbb{R}^3$ the global solution of the truncated model \eqref{eq:truncatedlu} ($R>0$). Then the pair $(X,\tau^R),$ with $X: \Omega \times [0, \tau^R) \times \T^2 \rightarrow \mathbb{R}^3$ such that $X_t=X_t^R$ for $t\in[0,\tau^R]$ is a local solution of the original model \eqref{eq:mainlu}.
\end{proposition}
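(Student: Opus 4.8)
The plan is to show that, before the truncation ``switches on'', the global solution $X^R$ of the truncated model \eqref{eq:truncatedlu} already solves the original model \eqref{eq:mainlu}, with $\tau^R$ chosen as the first time the $\mathcal{W}^{k,2}$-norm of $X^R$ reaches the truncation threshold. First I would set
\[
\tau^R := T\wedge\inf\{t\geq 0 : \|X^R_t\|_{k,2}\geq R\}
\]
for an arbitrary fixed horizon $T>0$, the cap by $T$ guaranteeing boundedness as required in Definition \ref{solutionssrsw}a. By Theorem \ref{mainthm:strongsln} the path $t\mapsto X^R_t$ is continuous in $\mathcal{W}^{k,2}(\mathbb{T}^2)^3$ and $(\mathcal{F}_t)$-adapted, so $t\mapsto\|X^R_t\|_{k,2}$ is a continuous adapted real process and $\tau^R$ is a genuine $(\mathcal{F}_t)$-stopping time. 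Taking $R>\|X_0\|_{k,2}$ and using $X^R_0=X_0$ together with path continuity, for $\mathbb{P}$-a.e.\ $\omega$ there is a nondegenerate time interval on which $\|X^R_t\|_{k,2}<R$; hence $\mathbb{P}(\tau^R>0)=1$, so $\tau^R$ is strictly positive.

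The key step is that the truncation is inactive up to $\tau^R$. For $t<\tau^R$ one has $\|X^R_t\|_{k,2}<R$, and by continuity $\|X^R_{\tau^R}\|_{k,2}=R$ on $\{\tau^R<T\}$; in all cases $\|X^R_t\|_{k,2}\leq R$ for every $t\in[0,\tau^R]$, so that $f_R(X^R_t)=1$ throughout, because $f_R\equiv 1$ on the closed interval $[0,R]$. Consequently the truncated operators collapse to the untruncated ones along $X^R$: since $\check{\mathcal{G}}$ carries no truncation one has $\check{\mathcal{G}}(X^R_t)=\mathcal{G}(X^R_t)$ identically, while $\check{\mathcal{A}}(X^R_t)$ coincides for $t\leq\tau^R$ with the drift operator $\mathcal{A}(X^R_t)$ of \eqref{eq:mainlu} up to the Itô--Stokes diffusion $\tfrac12\nabla\cdot(a\nabla X^R_t)$, which has been absorbed into $\check{\mathcal{A}}$ in \eqref{eq:truncatedlu} but sits on the right-hand side of \eqref{eq:mainlu}.

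Then I would set $X_t:=X^R_t$ for $t\in[0,\tau^R]$ and stop the integral form of \eqref{eq:truncatedlu} at $\tau^R$. Substituting $f_R(X^R_s)=1$ for $s\leq\tau^R$ converts that identity into exactly the integral equation of Definition \ref{solutionssrsw}a for the original model, holding $\mathbb{P}$-a.s.\ as an identity in $L^2(\mathbb{T}^2)^3$. The integrability and continuity requirements
\[
X_{\cdot\wedge\tau^R}\in L^2\big(\Omega,C([0,T];\mathcal{W}^{k,2}(\mathbb{T}^2)^3)\big),\qquad X\mathbbm{1}_{t\leq\tau^R}\in L^2\big(\Omega,L^2(0,T;\mathcal{W}^{k+1,2}(\mathbb{T}^2)^3)\big)
\]
are inherited verbatim from the corresponding properties of $X^R$ established in Theorem \ref{mainthm:strongsln} and the corollary following it, and $X_{\cdot\wedge\tau^R}$ is $\mathcal{F}_t$-adapted because $X^R$ is progressively measurable. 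Hence $(X,\tau^R)$ is a pathwise local solution.

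I do not expect a genuine obstacle here: the claim is essentially bookkeeping, which is why it is labelled immediate. The one point that warrants care is that the equality of dynamics must hold on the \emph{closed} interval up to $\tau^R$; this is precisely why it matters that $f_R$ is identically $1$ on the closed set $[0,R]$, so that the boundary value $\|X^R_{\tau^R}\|_{k,2}=R$ still yields $f_R=1$, and path continuity then upgrades the coincidence from $[0,\tau^R)$ to $[0,\tau^R]$. The remaining provisos, boundedness and strict positivity of $\tau^R$, are handled respectively by the cap at $T$ and by the choice $R>\|X_0\|_{k,2}$.
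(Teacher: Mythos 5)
Your proof is correct and is precisely the bookkeeping argument the paper leaves implicit: the paper states this proposition without any proof, declaring it immediate, and your write-up is the intended argument. Defining $\tau^R$ as the (capped) hitting time of level $R$, observing that $f_R\equiv 1$ on the closed interval $[0,R]$ so the truncated and untruncated dynamics coincide up to and including $\tau^R$ (with the $\tfrac{1}{2}\nabla\cdot(a\nabla X)$ term simply moved back to the right-hand side), and adding the proviso $R>\|X_0\|_{k,2}$ for strict positivity of $\tau^R$ — a condition the paper tacitly needs as well when it later takes $R=n$ in the maximal-solution construction — is exactly what is required.
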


\subsection{Maximal solution}\label{sect:maximalsolution}
We introduce below the result which states that the local strong solution constructed in the previous section is actually maximal. Note that the property of being a maximal solution is not influenced directly by the different form of the model, but by the structure of the a priori estimates.

\begin{proposition}\label{prop:maxsollu}
Given $X_0\in \mathcal{W}^{k,2}(\mathbb{T}^2)^3$ and $R>0$, there exists a
unique maximal solution $\left(X,\mathcal{T}\right) $ of the original model \eqref{eq:mainlu} such that 
\begin{equation}\label{limsup}
\displaystyle\limsup_{t\rightarrow \mathcal{T}}\|
X_t\|_{k,2}=\infty
\end{equation}%
whenever $\mathcal{T}<\infty $.
\end{proposition}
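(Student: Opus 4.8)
The plan is to build the maximal solution by gluing together the local solutions produced by the truncation procedure and then letting the truncation level $R\to\infty$. For each $R>0$ let $X^R$ denote the global solution of the truncated model \eqref{eq:truncatedlu} furnished by Proposition~\ref{prop:truncated}, and define the exit time
\[
\tau^R := \inf\{t\geq 0 : \|X_t^R\|_{k,2} \geq R\},
\]
with the convention $\inf\emptyset = \infty$. Since the truncation factor satisfies $f_R(X_t^R)=1$ whenever $\|X_t^R\|_{k,2}\leq R$, the process $X^R$ solves the untruncated system \eqref{eq:mainlu} on $[0,\tau^R]$, so $(X^R,\tau^R)$ is a pathwise local solution in the sense of Definition~\ref{solutionssrsw}a. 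The continuity of $t\mapsto\|X_t^R\|_{k,2}$ established in Section~\ref{normcontinuity} guarantees that $\tau^R$ is a genuine stopping time and that $\|X_{\tau^R}^R\|_{k,2}=R$ on $\{\tau^R<\infty\}$ whenever $\|X_0\|_{k,2}<R$.

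The first key step is consistency. Fix $R<R'$ and set $\rho := \tau^R\wedge\tau^{R'}$. On $[0,\rho]$ both truncation factors $f_R$ and $f_{R'}$ equal one, so $X^R$ and $X^{R'}$ both solve \eqref{eq:mainlu} with the same initial datum $X_0$; pathwise uniqueness of local strong solutions (Section~\ref{uniqueness}) then forces $X^R=X^{R'}$ on $[0,\rho]$. Because the two norms coincide on $[0,\rho]$ and the lower level $R$ is hit no later than $R'$, one checks $\rho=\tau^R$ and hence $\tau^R\leq\tau^{R'}$ with $X^{R}=X^{R'}$ on $[0,\tau^R]$. Thus $(\tau^R)_R$ is nondecreasing and the processes are mutually consistent; I then define $\mathcal{T}:=\lim_{R\to\infty}\tau^R=\sup_{R}\tau^R$ and a well-defined $(\mathcal{F}_t)$-adapted process $X$ on $[0,\mathcal{T})$ by $X_t:=X_t^R$ for $t<\tau^R$. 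Taking $\tau^n:=\tau^{R=n}$ gives an increasing sequence of stopping times with $\mathcal{T}=\lim_n\tau^n$, each $(X,\tau^n)$ is a local solution, and $\mathbb{P}(\mathcal{T}>0)=1$ follows from the strict positivity of $\tau^{R}$ for $R>\|X_0\|_{k,2}$, verifying conditions (i) and (ii) of Definition~\ref{solutionssrsw}c.

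It remains to check the blow-up alternative (iii). Suppose toward a contradiction that $\mathcal{T}<\infty$ while $M:=\limsup_{t\to\mathcal{T}}\|X_t\|_{k,2}<\infty$ on an event of positive probability; combined with continuity of $X$ on each $[0,\tau^n]$ this yields a finite bound $M'$ for $\sup_{t<\mathcal{T}}\|X_t\|_{k,2}$. Choosing $R>M'$, if $\tau^R<\mathcal{T}$ then continuity forces $\|X_{\tau^R}\|_{k,2}=R>M'$, a contradiction; hence $\tau^R=\mathcal{T}$ and $X=X^R$ on $[0,\mathcal{T})$. But $X^R$ exists globally and is $\mathcal{W}^{k,2}$-continuous, so $\|X_{\mathcal{T}}^R\|_{k,2}\leq M'<R$, whence $\|X_t^R\|_{k,2}<R$ on a neighbourhood of $\mathcal{T}$ and therefore $\tau^R>\mathcal{T}$ --- a contradiction. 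This proves $\limsup_{t\to\mathcal{T}}\|X_t\|_{k,2}=\infty$ on $\{\mathcal{T}<\infty\}$.

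Finally, uniqueness of the maximal solution follows once more from pathwise uniqueness: given two maximal solutions $(X,\mathcal{T})$ and $(Y,\mathcal{S})$ with exhausting sequences $(\tau^n)$ and $(\varsigma^m)$, uniqueness on each stopped interval $[0,\tau^n\wedge\varsigma^m]$ gives $X=Y$ there, and the blow-up characterisation (iii) forces $\mathcal{T}=\mathcal{S}$ $\mathbb{P}$-almost surely. I expect the main obstacle to be the consistency step: one must apply pathwise uniqueness on random time intervals and track carefully that the exit times nest correctly, which relies both on the $\mathcal{W}^{k,2}$-path continuity and on the fact that the same driving Brownian motions are used across the entire family $(X^R)_R$, so that the uniqueness theorem of Section~\ref{uniqueness} applies verbatim to the stopped systems.
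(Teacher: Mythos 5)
Your proposal is correct and follows essentially the same route as the paper: truncated global solutions are glued together via pathwise uniqueness of the truncated system, $\mathcal{T}$ is taken as the limit of the nested exit times $\tau^n$, and uniqueness of the maximal pair follows from the blow-up alternative combined with the a.s. finiteness of the sup-norm of any local solution. The only cosmetic difference is that you prove \eqref{limsup} by contradiction, using global existence and $\mathcal{W}^{k,2}$-norm continuity of the truncated solution, whereas the paper reads it off directly from $\|X_{\tau^n}\|_{k,2}=n\rightarrow\infty$; both arguments rest on the same exit-time property.
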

\begin{proof}\footnote{A similar proof for $k=1$ can be found in \cite{CL3}.}
\textbf{Existence:} If we choose $R=n$ in Proposition \ref{prop:truncated} then $\left( X^{n},\tau ^{n}\right) $ is a local
solution of the system \eqref{gencl}. Also, $X^{n+1}$ satisfies the truncated system \eqref{eq:truncatedlu} for $R=n$ on the
interval $[0,\tau ^{n}]$. By the local uniqueness, it follows that 
\begin{equation}
X^{n+1}|_{[0,\tau ^{n}]}=X^{n}|_{[0,\tau ^{n}]}.  \label{consistent}
\end{equation}%
Define $\mathcal{T}:=\displaystyle\lim_{n\rightarrow \infty }\tau ^{n}$ 
and 
\begin{equation}
X|_{[0,\tau ^{n}]}:=X^{n}|_{[0,\tau ^{n}]}.  \label{maximalsolution}
\end{equation}%
We need to show that (\ref{limsup}) holds. 
If $\displaystyle\lim_{n\rightarrow \infty }\tau ^{n}=\mathcal{T}<\infty $ 
then
\begin{equation*}
\displaystyle\limsup_{t\rightarrow \mathcal{T}}\left\vert \left\vert
X_t\right\vert \right\vert _{1,2}\ge    \displaystyle\limsup_{n\rightarrow\infty}\|X_{\tau_n}\|_{1,2} =  \displaystyle\limsup_{n\rightarrow\infty} n = \infty.
\end{equation*}
\textbf{Uniqueness}. 
Assume that $\left( \bar{X}{,\mathcal{\bar{T}}}\right) $ is another solution with 
$\left( \bar{X},\bar\tau_n \right) $, $n\ge 1$ being the corresponding sequence of local solutions converging to the maximal solution. 
By the uniqueness of the truncated
equation it follows that $\bar{a}=a$ on $[0,\bar{\tau}^{n}\wedge \tau ^{n}]$%
. By taking the limit as $n\rightarrow \infty $ it follows that $\bar{X}=X$
on $[0,\mathcal{T}\wedge \mathcal{\bar{T}})$. We prove next that $\mathcal{T}%
=\mathcal{\bar{T}}$, $\mathbb{P}-a.s.$. Assume that 
\begin{equation*}
\mathbb{P}\left( \omega \in \Omega ,\mathcal{T}\left( \omega \right) \neq 
\mathcal{\bar{T}}\left( \omega \right) \right) >0.
\end{equation*}%
Note that 
\begin{equation*}
\mathbb{P}\left( \omega \in \Omega ,\mathcal{T}\left( \omega \right) \neq 
\mathcal{\bar{T}}\left( \omega \right) \right) \leq \mathbb{P}\left( \Xi
^{1}\right) +\mathbb{P}\left( \Xi ^{2}\right) 
\end{equation*}%
where 
\begin{eqnarray*}
\Xi ^{1} &=&\left\{ \omega \in \Omega ,\mathcal{T}\left( \omega \right)
<\infty ,\mathcal{T}\left( \omega \right) <\mathcal{\bar{T}}\left( \omega
\right) \right\}  \\
\Xi ^{2} &=&\left\{ \omega \in \Omega ,\mathcal{\bar{T}}\left( \omega
\right) <\infty ,\mathcal{\bar{T}}\left( \omega \right) <\mathcal{T}\left(
\omega \right) \right\} .
\end{eqnarray*}%
We prove that $\mathbb{P}\left( \Xi ^{1}\right) =\mathbb{P}\left( \Xi
^{2}\right) =0$. The two sets are symmetric so we show this only for the
first one. From the definition of the local solution, observe that%
\begin{equation*}
\mathbb{E}\left[ \sup_{s\in \lbrack 0,\bar{\tau}^{n}\left( \omega \right)
]}\Vert \bar{X}_t\Vert_{k,2}\right] <\infty 
\end{equation*}%
hence%
\begin{equation*}
\mathbb{P}\left( \omega \in \Omega ,\sup_{s\in \lbrack 0,\bar{\tau}%
^{n}\left( \omega \right) ]}\Vert \bar{X}_t\Vert_{k,2}=\infty \right) =0.
\end{equation*}%
However, if $\mathcal{T}\left( \omega \right) <\infty $ and $\mathcal{T}%
\left( \omega \right) <\bar{\tau}^{n}\left( \omega \right) $ then 
\begin{equation*}
\infty =\sup_{s\in \lbrack 0,\mathcal{T}\left( \omega \right) )}\Vert X_s\Vert
_{k,2}=\sup_{s\in \lbrack 0,\mathcal{T}\left( \omega \right) )}\Vert \bar{X_s
}\Vert_{k,2}\leq \sup_{s\in \lbrack 0,\bar{\tau}^{n}\left( \omega \right)
]}\Vert \bar{X}_s\Vert_{k,2}.
\end{equation*}%
It follows that $\mathbb{P}\left( \omega \in \Omega ,\mathcal{T}\left(
\omega \right) <\infty ,\mathcal{T}\left( \omega \right) <\bar{\tau}%
^{n}\left( \omega \right) \right) =0$ and therefore  
\begin{equation*}
\mathbb{P}\left( \Xi ^{1}\right) =\lim_{n\rightarrow \infty }\mathbb{P}%
\left( \omega \in \Omega ,\mathcal{T}\left( \omega \right) <\infty ,\mathcal{%
T}\left( \omega \right) <\bar{\tau}^{n}\left( \omega \right) \right) =0
\end{equation*}
which gives the claim.
\end{proof}

\begin{corollary}\label{stoppedestimlu}
Let $(X, \cT)$ be the maximal solution constructed in Proposition \ref{prop:maxsollu}. Then the process  $t\rightarrow X_{t\wedge \tau^R}$ is such that
\begin{equation*}
   X_{\cdot\wedge\tau^R}\in L^2\left( \Omega, C\left([0,T];\mathcal{W}^{k,2}(\mathbb{T}^2)^3\right) \right)
\end{equation*}
\begin{equation*}
   X\mathbbm{1}_{t\leq\tau^R} \in L^2\left( \Omega, L^2\left(0, T;\mathcal{W}^{k+1,2}(\mathbb{T}^2)^3\right)\right) 
\end{equation*}
for any $T>0$. In particular, 
\begin{equation}\label{difnorm}
\mathbb{E}[\|X\|_{\tau^R\wedge T,k+1,2}^2] <\infty
\end{equation}
for any $R,T>0$.
\end{corollary}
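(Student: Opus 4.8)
The plan is to deduce everything from the consistency identity between the maximal solution and the truncated solutions, together with the uniform a priori bound already established in Theorem \ref{mainthm:strongsln}. First I would recall that, by construction in Propositions \ref{prop:truncated} and \ref{prop:maxsollu}, the maximal solution $X$ agrees with the global solution $X^R$ of the truncated model \eqref{eq:truncatedlu} on the stochastic interval $[0,\tau^R]$; that is, $X_{t\wedge\tau^R}=X^R_{t\wedge\tau^R}$ for every $t\ge 0$. This identification is the only structural ingredient needed, and it rests on the local uniqueness used to establish \eqref{consistent}.

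Next I would invoke the estimate \eqref{origslnestim} from Theorem \ref{mainthm:strongsln}, applied to $\check X = X^R$ with $p=2$, to obtain
\begin{equation*}
\mathbb{E}\left[\sup_{t\in[0,T]}\|X^R_t\|_{k,2}^2 + \int_0^T\|X^R_s\|_{k+1,2}^2\,ds\right]\le C(R)<\infty,
\end{equation*}
together with the fact, also furnished by Theorem \ref{mainthm:strongsln}, that $X^R$ has paths in $C([0,T];\mathcal{W}^{k,2}(\mathbb{T}^2))\cap L^2(0,T;\mathcal{W}^{k+1,2}(\mathbb{T}^2))$.

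The two membership statements then follow by monotonicity under stopping. For the first, I would use that $t\mapsto t\wedge\tau^R$ is continuous, so $t\mapsto X^R_{t\wedge\tau^R}$ inherits continuity in $\mathcal{W}^{k,2}$, while $\sup_{t\in[0,T]}\|X_{t\wedge\tau^R}\|_{k,2}\le \sup_{t\in[0,T]}\|X^R_t\|_{k,2}$ has a finite second moment; hence $X_{\cdot\wedge\tau^R}\in L^2(\Omega,C([0,T];\mathcal{W}^{k,2}(\mathbb{T}^2)^3))$. For the second, I would note
\begin{equation*}
\int_0^T\|X_s\mathbbm{1}_{s\le\tau^R}\|_{k+1,2}^2\,ds=\int_0^{T\wedge\tau^R}\|X^R_s\|_{k+1,2}^2\,ds\le\int_0^T\|X^R_s\|_{k+1,2}^2\,ds,
\end{equation*}
and take expectations, which yields $X\mathbbm{1}_{t\le\tau^R}\in L^2(\Omega,L^2(0,T;\mathcal{W}^{k+1,2}(\mathbb{T}^2)^3))$. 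Finally, \eqref{difnorm} is obtained simply by adding the two bounds, since each summand has finite expectation.

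The argument is essentially bookkeeping, so there is no serious analytic obstacle; the only points requiring care are the identification $X_{t\wedge\tau^R}=X^R_{t\wedge\tau^R}$ and the verification that stopping preserves both the path continuity and the progressive measurability/adaptedness needed for the stated function-space memberships.
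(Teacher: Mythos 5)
Your proposal is correct and is essentially the argument the paper intends: the corollary is stated without a separate proof precisely because it follows, as you show, from the identification $X_{t\wedge\tau^R}=X^R_{t\wedge\tau^R}$ given by Proposition \ref{prop:truncated} and the gluing \eqref{maximalsolution}, combined with the a priori bound \eqref{origslnestim} (with $p=2$) for the truncated solution from Theorem \ref{mainthm:strongsln}. Your bookkeeping on stopping (continuity of $t\mapsto t\wedge\tau^R$, monotonicity of the stopped integral, and preservation of adaptedness) is exactly the content the paper leaves implicit.
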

\begin{proposition}\label{highernormcontrol}
Let $(X, \cT)$ be the maximal solution constructed in Proposition \ref{prop:maxsollu}. Then 
\begin{equation*}
    \mathbb{P}\left( \cT = \hat{\cT}  \right) = 1. 
\end{equation*}
\end{proposition}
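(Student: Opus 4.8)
The plan is to establish the two inequalities $\hat{\cT}\le\cT$ and $\cT\le\hat{\cT}$ separately, both $\bP$-almost surely, where $\cT$ is the maximal time from Proposition \ref{prop:maxsollu}, characterised by the blow-up alternative \eqref{limsup} in the $\|\cdot\|_{k,2}$-norm, and $\hat{\cT}$ denotes the explosion time of $X$ measured in the higher-order norm $\cW^{k+1,2}$ (so that $\hat\cT$ is characterised by $\int_0^{\hat\cT}\|X_s\|_{k+1,2}^2\,ds=\infty$). The inequality $\hat{\cT}\le\cT$ is the soft one: since $\|\cdot\|_{k,2}\le\|\cdot\|_{k+1,2}$, finiteness of the higher norm forces finiteness of the lower one, so the higher norm can explode no later than the lower one. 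More precisely, on $[0,\hat\cT)$ the process $X$ is $\cW^{k+1,2}$-valued, hence also a $\cW^{k,2}$-valued local solution on every subinterval $[0,\hat\cT-\varepsilon]$; combined with the uniqueness proven in Proposition \ref{prop:maxsollu}, this restriction must coincide with the $\cW^{k,2}$-maximal solution, whence $\hat{\cT}\le\cT$ a.s.

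For the reverse inequality I would argue by contradiction on the event $A:=\{\hat\cT<\cT\}$, using the stopped a priori bound \eqref{difnorm} from Corollary \ref{stoppedestimlu}. On $A$ the weaker norm stays finite up to $\hat\cT$, so for $\omega\in A$ there is a level $R=R(\omega)$ with $\sup_{t<\hat\cT(\omega)}\|X_t(\omega)\|_{k,2}<R$; consequently $\hat\cT(\omega)\le\tau^R(\omega)$, where $\tau^R:=\inf\{t:\|X_t\|_{k,2}\ge R\}$. Fixing $R$ and $T$ and restricting to the subevent $A\cap\{\hat\cT<\tau^R\wedge T\}$, the estimate \eqref{difnorm} yields $\Exp[\,\|X\|^2_{\tau^R\wedge T,k+1,2}\,]<\infty$ and in particular $\int_0^{\hat\cT}\|X_s\|_{k+1,2}^2\,ds\le\int_0^{\tau^R\wedge T}\|X_s\|_{k+1,2}^2\,ds<\infty$ almost surely on that subevent. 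This contradicts the characterisation $\int_0^{\hat\cT}\|X_s\|_{k+1,2}^2\,ds=\infty$ of the higher-norm explosion time. Exhausting $A$ by the countable family of subevents obtained as $R,T$ range over the integers then gives $\bP(A)=0$, i.e.\ $\cT\le\hat\cT$ a.s.

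Combining the two inequalities yields $\bP(\cT=\hat\cT)=1$. I expect the main obstacle to be the reverse inequality, and within it the matching of the two descriptions of the explosion time: one must ensure that \eqref{difnorm}, which controls the higher norm only up to the exit time $\tau^R$ of the weaker norm, is genuinely strong enough to preclude higher-norm blow-up before the weaker-norm blow-up. This rests on the parabolic smoothing supplied by the viscous terms $\nu\Delta u$ and $\eta\Delta h$ — the gain of one spatial derivative encoded in the $L^2(0,T;\cW^{k+1,2})$ control — together with the path-by-path localisation on $A\cap\{\tau^R>\hat\cT\}$ and passage to the limit $R,T\to\infty$. Some care is also needed to phrase the blow-up criterion defining $\hat\cT$ consistently with the criterion \eqref{limsup} for $\cT$; should $\hat\cT$ instead be defined through $\limsup_{t\to\hat\cT}\|X_t\|_{k+1,2}=\infty$, one first upgrades the time-integrated control of \eqref{difnorm} to a supremum bound by an It\^o energy estimate at level $k+1$ on $[0,\tau^R\wedge T]$, absorbing the nonlinear, Coriolis, pressure, transport and It\^o--Stokes contributions into the dissipation and closing a Gronwall inequality whose coefficient is bounded by $C(R)$ while $\|X_t\|_{k,2}\le R$.
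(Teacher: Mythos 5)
Your proof is correct and follows essentially the same route as the paper: the easy inequality $\hat{\cT}\le\cT$ via monotone comparison of the two norms (the paper phrases it as $\hat{\tau}^M\le\tau^M$), and the hard inequality via the stopped estimate \eqref{difnorm} of Corollary \ref{stoppedestimlu} combined with the blow-up characterisation of $\hat{\cT}$ and a countable exhaustion over integer levels $R$ (the paper's $N$) and horizons $T$. Your contradiction framing on $A=\{\hat\cT<\cT\}$ is just the contrapositive of the paper's direct almost-sure statements $\tau^N\wedge T<\hat{\cT}$ on $\{\hat{\cT}<\infty\}$, so the two arguments coincide in substance.
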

\begin{proof}[\textbf{Proof.}]
 One can observe that $\hat{\tau}^M \leq \tau^M$ for all $M\geq 0$, and therefore $\hat{\cT} \leq \cT$ $\mathbb{P}-a.s.$. We show that $\cT \leq \hat{\cT}$ $\mathbb{P}-a.s.$. On the set $\{ \hat{\cT} = \infty\}$ the inequality is trivially true, so we only need to show that
 \begin{equation*}
     \mathbb{P}\left( \{\omega \in \Omega: \hat{\cT}(\omega) < \infty, \cT(\omega) \leq \hat{\cT}(\omega)\} \right) =1. 
 \end{equation*}
 Note that
 \begin{equation*}
     \begin{aligned}
    \{ \omega \in \Omega: \displaystyle \|X(\omega)\|_{\tau^N,k+1,2} < \infty \} = \displaystyle\bigcup_M  \{ \omega \in \Omega:  \|X(\omega)\|_{\tau^N,k+1,2} < M \} = \displaystyle\bigcup_M \{ \omega \in \Omega: \tau^N(\omega) <  \hat{\tau}^M(\omega) \}
     \end{aligned}
 \end{equation*}
 and
 \begin{equation*}
     \begin{aligned}
       \displaystyle\bigcup_M \{ \omega \in \Omega: \tau^N(\omega) < \hat{\tau}^M(\omega)\} \subset \{ \omega \in \Omega: \hat{\cT}(\omega)<\infty,\tau^N(\omega) < \hat{\cT}(\omega) \}.
     \end{aligned}
 \end{equation*}
From Proposition \ref{stoppedestimlu} we deduce that 
\begin{equation*}
    \begin{aligned}
    \mathbb{P}\left( \{ \omega \in \Omega:  \|X(\omega)\|_{\tau^N\wedge T,k+1,2} < \infty\} \right) =1, \ \ \ \forall N\in\mathbb{N}, \ \ \ T>0.
    \end{aligned}
\end{equation*}
and since
$\limsup_{t\rightarrow \hat{\cT}} \|X(\omega)\|_{t,k+1,2}=\infty$ on the set $\{\hat{\cT}<\infty\}$, we deduce that 
\[
    \mathbb{P}\left( \{\omega \in \Omega: \hat{\cT}(\omega)<\infty,  \tau^N(\omega) \wedge T< \hat{\cT}(\omega)\}\right)=1,
    \ \ \ \forall N\in\mathbb{N}, \ \ \ T>0,
\]
therefore
\begin{equation*}
    \begin{aligned}
    \mathbb{P}\left( \{\omega \in \Omega: \hat{\cT}(\omega)<\infty,  \tau^N(\omega) <\hat{\cT}(\omega)\}\right)=
    \mathbb{P}\left( \bigcap_{L>1}\{\omega \in \Omega: \hat{\cT}(\omega)<\infty,  \tau^N(\omega)\wedge L < \hat{\cT}(\omega)\}\right) = 1. 
    \end{aligned}
\end{equation*}
Then we have
\begin{equation*}
    \begin{aligned}
    \mathbb{P}\left( \{\omega \in \Omega: \hat{\cT}(\omega)<\infty, \cT(\omega) \leq \hat{\cT}(\omega)\}\right) &\ge \mathbb{P}\left( \displaystyle\bigcap_N \{ \omega \in \Omega: \hat{\cT}(\omega)<\infty, \tau^N(\omega) < \hat{\cT}(\omega)\} \right) 
     =1. 
    \end{aligned}
\end{equation*}
\end{proof}
\section{Pathwise Uniqueness}\label{uniqueness}
\begin{theorem}\label{prop:uniquenesstruncatedlu} 
Let $(X^{R,1}, \tau_M^{R,1})$, $(X^{R,2}, \tau_M^{R,2})$ be two solutions
of the truncated LU-SRSW system,
which take values in the space 
$L^2\left( \Omega, C\left([0,T],\cW^{k,2}(\T^2)^3\right)\right) \cap L^2\left(\Omega, L^2\left(0,T;\cW^{k+1,2}(\T^2)^3\right)\right)$
and start from the initial conditions $X_0^1$, $X_0^2 \in \mathcal{W}^{k,2}(\mathbb{T}^2)^3$, respectively. 
Then there exists a constant $C=C(M)$ independent of $R$ such that 
\begin{equation*}
    \mathbb{E}\left[\|\bar{X}_{t\wedge\bar{\tau}_M^R}^R\|_{k,2}^2\right] \leq Ce^{Ct}\|\bar{X}_0\|_{k,2}^2,
\end{equation*}
where $\bar{X}^R := X^{R,1} - X^{R,2}$ and $\bar{\tau}_M^R := \tau_M^{R,1}\wedge \tau_M^{R,2}.$
In particular, the truncated LU-SRSW system has a unique solution in the space
\begin{equation*}
L^2\left( \Omega, C([0,T],\cW^{k,2}(\T^2)^3)\right) \cap L^2\left(\Omega, L^2(0,T;\cW^{k+1,2}(\T^2)^3)\right).
\end{equation*}
$$\hbox{where} \ \ \tau_M^{R,i}= \displaystyle\inf_t\{ \displaystyle\sup_{s\in[0,t]}\|X_s^{R,i}\|_{k,2}^2 + \displaystyle\int_0^t\|X_s^{R,i}\|_{k+1,2}^2ds \geq M \}.$$
\end{theorem}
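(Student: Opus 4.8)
The plan is to prove the stated energy estimate for the difference $\bar X^R = X^{R,1} - X^{R,2}$ by an It\^o energy argument at the level of the $\mathcal{W}^{k,2}$-norm, and then to read off pathwise uniqueness by setting $X_0^1 = X_0^2$ and sending $M\to\infty$. First I would subtract the two copies of the truncated equation \eqref{eq:truncatedlu} to obtain an evolution equation for $\bar X^R$, whose drift is $\check{\mathcal{A}}(X^{R,1}) - \check{\mathcal{A}}(X^{R,2})$, whose noise coefficient is the \emph{linear} operator $\check{\mathcal{G}}(X^{R,1}) - \check{\mathcal{G}}(X^{R,2}) = \mathcal{L}_{\sigma}\bar X^R$, and which carries the viscous term $\gamma\Delta\bar X^R$. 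Applying the It\^o formula in the Hilbert space $\mathcal{W}^{k,2}(\mathbb{T}^2)^3$ to $\|\bar X^R\|_{k,2}^2$ and stopping at $\bar\tau_M^R$ gives, schematically,
\begin{equation*}
\|\bar X_{t\wedge\bar\tau_M^R}^R\|_{k,2}^2 = \|\bar X_0\|_{k,2}^2 - 2\int_0^{t\wedge\bar\tau_M^R}\langle \bar X_s^R, \check{\mathcal{A}}(X_s^{R,1}) - \check{\mathcal{A}}(X_s^{R,2})\rangle_{k,2}\,ds - 2\gamma\int_0^{t\wedge\bar\tau_M^R}\|\nabla\bar X_s^R\|_{k,2}^2\,ds + \int_0^{t\wedge\bar\tau_M^R}\|\mathcal{L}_{\sigma}\bar X_s^R\|_{k,2}^2\,ds + \mathcal{M}_t,
\end{equation*}
where $\mathcal{M}_t$ is a local martingale and the term $-2\gamma\|\nabla\bar X^R\|_{k,2}^2$ supplies the crucial dissipation (here both $\nu,\eta>0$, cf.\ Remark \ref{l1}).

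Next I would estimate the drift pairing term by term. The Coriolis and pressure-gradient contributions are linear and bounded directly by $C\|\bar X^R\|_{k,2}^2$. For the transport and compressibility contributions I would use the splitting already employed in \eqref{decomp}, separating $f_R(X^{R,1})(\mathcal{L}_{u^{R,1}} + (1+\delta)\mathcal{D}_{u^{R,1}})X^{R,1} - f_R(X^{R,2})(\cdots)X^{R,2}$ into a factor carrying the velocity difference $\bar u^R$, a factor carrying $\nabla\bar X^R$, and a factor carrying $f_R(X^{R,1}) - f_R(X^{R,2})$; the last is controlled because $f_R$ is Lipschitz with constant independent of $R$, so $|f_R(X^{R,1}) - f_R(X^{R,2})|\leq C\|\bar X^R\|_{k,2}$. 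Pairing these against $\bar X^R$ in $\mathcal{W}^{k,2}$ and using the 2D product and commutator estimates (via Ladyzhenskaya's inequality) together with the a priori bounds $\|X_s^{R,i}\|_{k,2}^2\leq M$ guaranteed on $[0,\bar\tau_M^R]$, every such term is bounded by
\begin{equation*}
C(M)\big(1 + \|u_s^{R,1}\|_{k+1,2}^2 + \|u_s^{R,2}\|_{k+1,2}^2\big)\|\bar X_s^R\|_{k,2}^2 + \tfrac{\gamma}{2}\|\nabla\bar X_s^R\|_{k,2}^2,
\end{equation*}
the genuinely top-order part being thrown into the viscous dissipation. For the It\^o--Stokes drift, the quadratic variation $\|\mathcal{L}_{\sigma}\bar X^R\|_{k,2}^2$ and the diffusion $\tfrac12\mathcal{L}_{\sigma}^2\bar X^R$ I would invoke the $L^2$ cancellation \eqref{cancel2}; in $\mathcal{W}^{k,2}$ this is exact only at top order, and the commutators left behind by $\sigma\cdot\nabla$ are lower order and again split, half into $C(M)\|\bar X^R\|_{k,2}^2$ and half into the dissipation, using $\|\sigma\|_{k+1,\infty}<\infty$.

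Combining these bounds leaves a differential inequality $d\|\bar X^R\|_{k,2}^2 \leq g(s)\|\bar X^R\|_{k,2}^2\,ds + d\mathcal{M}_s$ on $[0,\bar\tau_M^R]$, with $g(s) = C(M)(1 + \|u_s^{R,1}\|_{k+1,2}^2 + \|u_s^{R,2}\|_{k+1,2}^2)$. The key structural point is that, by the very definition of $\bar\tau_M^R$, one has the \emph{deterministic} bound $\int_0^{\bar\tau_M^R}g(s)\,ds \leq C(M)(1+T)$, since $\int_0^{\tau_M^{R,i}}\|u_s^{R,i}\|_{k+1,2}^2\,ds\leq M$. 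This lets me take expectations, kill the local martingale by a localisation/BDG argument, and apply the stochastic Gronwall lemma to obtain $\mathbb{E}[\|\bar X_{t\wedge\bar\tau_M^R}^R\|_{k,2}^2]\leq Ce^{Ct}\|\bar X_0\|_{k,2}^2$ with $C=C(M)$ independent of $R$, as stated. I expect the main obstacle to be precisely the top-order nonlinear estimate: pairing the transport and compressibility differences against $\bar X^R$ in $\mathcal{W}^{k,2}$ produces genuine $(k+1)$-order terms, and the whole argument hinges on showing — through integration by parts, the divergence structure, and the commutator estimates — that these are lower order modulo $C(M)\|\bar X^R\|_{k,2}^2$ or absorbable by the strictly positive dissipation $\gamma\Delta$. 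This is exactly where the added viscosity (Remark \ref{l1}), the two-dimensionality, and the regularity $k\geq 1$ become indispensable.

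Finally, pathwise uniqueness is immediate: taking $X_0^1 = X_0^2$ gives $\bar X_0 = 0$, whence $\mathbb{E}[\|\bar X_{t\wedge\bar\tau_M^R}^R\|_{k,2}^2] = 0$ for every $M$, so $X^{R,1}$ and $X^{R,2}$ coincide on $[0,\bar\tau_M^R]$ almost surely; since the truncated solutions are global and $\bar\tau_M^R\uparrow\infty$ as $M\to\infty$, the two solutions agree on $[0,T]$ for every $T>0$, giving uniqueness in the stated space.
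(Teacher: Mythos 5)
Your proposal is correct and follows essentially the same route as the paper: an It\^o energy estimate for $\|\bar X^R\|_{k,2}^2$, the three-way splitting of the nonlinear differences (a term carrying the velocity difference, a term carrying the state difference, and a term carrying $f_R(X^{R,1})-f_R(X^{R,2})$ controlled by the $R$-independent Lipschitz constant of $f_R$), Ladyzhenskaya-type 2D estimates with the top-order contributions absorbed into the viscous dissipation, a Gronwall argument exploiting that the weight is controlled on $[0,\bar\tau_M^R]$ by the very definition of the stopping time, and uniqueness read off from zero initial difference. The only cosmetic deviation is your choice of Gronwall weight $C(M)\bigl(1+\|u^{R,1}\|_{k+1,2}^2+\|u^{R,2}\|_{k+1,2}^2\bigr)$, handled through its time integral, whereas the paper factors the drift estimate into Lemma \ref{generallemmauniquenesslu} with the pointwise-bounded weight $\|Z\|=\bigl(\|X^{R,1}\|_{k,2}^2+\|X^{R,2}\|_{k,2}^2\bigr)^2$ and an exponential-weight argument — both yield the stated bound with $C=C(M)$ independent of $R$.
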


\noindent\textbf{\textit{Proof}}
\noindent 
\noindent We show that
\begin{equation}\label{unique1lu}
    \begin{aligned}
    d\|\bar{X}_t^R\|_{k,2}^2 \leq C\left( \|X^{R,1}\|_{k,2}^4 + \|X^{R,2}\|_{k,2}^4\right) \|\bar{X}_t^R\|_{k,2}^2dt + dD_t
    \end{aligned}
\end{equation}
where $D_t$ is a local martingale given by 
\begin{equation*}
    dD_t := 2\left( \displaystyle\sum_{|\alpha|\leq k}\langle \partial^{\alpha}\bar{u}_t^R, \partial^{\alpha}\left(\mathcal{L}_{\sigma}\bar{u}_t^R\right)\rangle +  \langle\partial^{\alpha} \bar{h}_t^R, \partial^{\alpha}\left(\mathcal{L}_{\sigma} \bar{h}_t^R\right)\rangle  \right)
dB_t.
\end{equation*}
Then
\begin{equation*}
    \begin{aligned}
    \mathbb{E}\left[ e^{-C\displaystyle\int_0^{t\wedge\bar{\tau}_M^R}\|Z_s\|ds} \|\bar{X}_{t\wedge\bar{\tau}_M^R}^R\|_{k,2}^2\right] 
    & \leq \|\bar{X}_0\|_{k,2}^2 + \mathbb{E}\left[\displaystyle\int_0^{t\wedge\bar{\tau}_M^R}e^{-C\displaystyle\int_0^{s\wedge\bar{\tau}_M^R}\|Z_r\|dr}dB_{s}\right]
    \end{aligned}
\end{equation*}
where
\begin{equation*}
    \|Z\|:= \|X^{R,1}\|_{k,2}^4 + \|X^{R,2}\|_{k,2}^4.
\end{equation*}
That is
\begin{equation*}
    \begin{aligned}
   \mathbb{E}\left[\|\bar{X}_{t\wedge\bar{\tau}_M^R}^R\|_{k,2}^2\right] & 
    \leq e^{CM^4t}\|\bar{X}_0\|_{k,2}^2
    \end{aligned}
\end{equation*}
since the stopped process $D_{t\wedge\bar{\tau}_M^R}$ is a martingale and therefore its expectation is zero.
We will now prove that \eqref{unique1lu} holds, using Lemma \ref{generallemmauniquenesslu}. 

\noindent We can write
\begin{subequations}
\begin{alignat}{2}  
& d\bar{u}_t^R = \left(\tilde{Q}(\bar{u}^R) - fk\times \bar{u}_t^R + \nu\Delta\bar{u}_t^R + g\nabla%
\bar{h}_t^R + \frac{1}{2}\nabla \cdot (a\nabla\bar{u}_t^R) \right)dt + \mathcal{L}_{\sigma}\bar{u}_t^RdB_t  \\
&  d\bar{h}_t^R = \left( \tilde{Q}(\bar{h}^R, \bar{u}^R) + \eta\Delta \bar{h}_t^R + \frac{1}{2}\nabla \cdot (a\nabla\bar{h}_t^R) \right)dt +  \mathcal{L}_{\sigma}\bar{h}_t^RdB_t  . 
\end{alignat}
\end{subequations}
where 
$$Q(\bar{u}^R):=f_R(X_R^2)\nabla \cdot (u_t^{R,2}(u_t^{R,2})^T) -
f_R(X_R^1)\nabla \cdot (u_t^{R,1}(u_t^{R,1})^T) +f_R(X_R^2)\alpha u_t^{R,2}(\nabla \cdot u_t^{R,2}) - f_R(X_R^1)\alpha u_t^{R,1}(\nabla \cdot u_t^{R,1})$$
$$Q(\bar{u}^R):= \displaystyle\sum_j Q((\bar{u}^{R})_j,\bar{u}^R)$$
$$Q(\bar{h}^R, \bar{u}^R) := f_R(X_R^2)\nabla
\cdot (h_t^{R,2}u_t^{R,2}) - f_R(X_R^1)\nabla \cdot
(h_t^{R,1}u_t^{R,1})+f_R(X_R^2)\beta h_t^{R,2}(\nabla \cdot u_t^{R,2}) - f_R(X_R^1)\beta h_t^{R,1}(\nabla \cdot u_t^{R,1})$$\\
$$\tilde{Q}:= Q^S-Q$$
By the It\^{o} formula 
\begin{equation*}
\begin{aligned}
d\|\partial^{\theta}\bar{X}_t^R\|_{2}^2  &+ 2\gamma\|\partial^{\theta+1} \bar{X}%
_t^R\|_2^2dt  \\
&= -2 \left(\langle \partial^{\theta} \bar{u}_t^R,
\partial^{\theta}\left(fk\times\bar{v}_t^R + g\nabla \bar{p}_t^R\right)\rangle 
 + \langle \partial^{\theta} \bar{u}_t^R, \partial^{\theta}\tilde{Q}(\bar{u}^R) \rangle  + \langle \partial^{\theta} \bar{h}_t^R,\partial^{\theta} \tilde{Q}(\bar{h}^R, \bar{u}^R) \rangle\right) dt \\
 &+  \langle \partial^{\theta}\mathcal{L}_{\sigma}\bar{u}_t^R, \partial^{\theta}\mathcal{L}_{\sigma} \bar{u}_t^R \rangle dt +  \langle \partial^{\theta}\mathcal{L}_{\sigma}\bar{h}_t^R, \partial^{\theta}\mathcal{L%
}_{\sigma}\bar{h}_t^R\rangle dt + dD_t 
\end{aligned}
\end{equation*}
for a multi-index $\theta$ with $|\theta|\leq k$
All the terms which do not contain a stochastic integral are controlled as functions of $C(\epsilon,R,\delta)\|Z\|\|\bar{X}\|_{k,2}^2 + C(\epsilon,\delta) \|\bar{X}\|_{k+1,2}^2$ using Lemma \ref{generallemmauniquenesslu}. We choose $C(\epsilon,\delta) < \gamma $ such that all terms which are controlled by $C(\epsilon,\delta)\|\bar{X}\|_{k+1,2}^2$ on the right hand side cancel out the term $2\gamma\|\bar{X}\|_{k+1,2}^2$ on the left hand side. Then \eqref{generallemmauniquenesslu} holds as requested and therefore the two solutions are indistinguishable as processes with paths in $L^2\left( \Omega, C\left([0,T],\cW^{k,2}(\T^2)^3\right)\right) \cap L^2\left(\Omega, L^2\left(0,T;\cW^{k+1,2}(\T^2)^3\right)\right)$. 

 For the weak solution $(k=0)$ the stopping we have $\displaystyle\lim_{R\rightarrow\infty}\tau_M^{R,1} = \displaystyle\lim_{R\rightarrow\infty}\tau_M^{R,1} = \infty$ and the "weight" $Z$ is given by
\begin{equation*}
    \|Z\| := \left( \|X^1\|_{2}^2 + \|X^2\|_{2}^2 \right)^2
\end{equation*}
therefore it is bounded in $L^2(\Omega, L^2(0,T))$.

\section{Proof of Theorem \ref{thmlu2} - Existence of a global weak solution}\label{sect:globalweak}
For $\alpha=\beta = -\frac{1}{2}$ the main system \eqref{gencl} can be rewritten as
\begin{equation*}
    dX_t + \tilde{\mathcal{A}}(X_t)dt + \mathcal{G}(X_t)dB_t = \gamma\Delta X_t
\end{equation*}
In this case the truncation function $f_R\equiv 1$ and then the approximating system can be rewritten as\footnote{We will use the same notation $\tilde{\mathcal{A}^N}$ and $\mathcal{G}^N$, but taking into account that $f_R\equiv 1$, we can discard the truncation function when writing the approximating system.}
\begin{equation*}
    dX_t^N + \tilde{\mathcal{A}}^N(X_t^N)dt + \mathcal{G}^N(X_t^N)dB_t^N = \gamma\Delta X_t^N
\end{equation*}
where\footnote{Note that $\tilde{\mathcal{A}}$ is a particular form of $\mathcal{A}$, for $\alpha=\beta = -\frac{1}{2}$.}
\begin{equation*}
\tilde{\mathcal{A}}^N(X^N) :=\left( 
\begin{array}{c}
 fk\times \mathcal{J}_Nu^N+ \mathcal{J}_N(\mathcal{\widetilde{L}}_{\mathcal{J}_Nu^N} \mathcal{J}_Nu^N) + \mathcal{J}_N(\mathcal{D}_{\mathcal{J}_Nu^N} \mathcal{J}_Nu^N) + \mathcal{J}_Ng\nabla (\mathcal{J}_Nh^N )-\frac{1}{2}\mathcal{J}_{N}\left(\mathcal{L}_{\sigma}^2\mathcal{J}_{N}u^{N}\right)  \\ 
\mathcal{J}_N(\mathcal{\widetilde{L}}_{\mathcal{J}_Nu^N} \mathcal{J}_Nh^N) + \mathcal{J}_N(\mathcal{D}_{\mathcal{J}_Nu^N}\mathcal{J}_Nh^N )- \frac{1}{2} \mathcal{J}_{N}\left(\mathcal{L}_{\sigma}^2\mathcal{J}_{N}h^{N}\right)
\end{array}
\right)
\end{equation*}
and 
\begin{equation*}
\mathcal{G}^N(X^N) :=\left( 
\begin{array}{c}
\mathcal{J}_N(\cL_{\sigma} \mathcal{J}_Nu^N) \\ 
\mathcal{J}_N(\cL_{\sigma} \mathcal{J}_Nh^N)%
\end{array}%
\right). 
\end{equation*}
\begin{remark}
Note that when writing the approximation system above we could drop the projection operator $\mathcal{J}_N$ as $X^N$ is already a solution of a system which lives in the projected space (see the Appendix for details). However, we prefer to keep this notation in order better highlight the properties of the approximating sequence.
\end{remark}
\begin{proof}[\textbf{Proof of Theorem \ref{thmlu2}}.]
From Theorem \ref{globalapprox1} and Proposition \ref{prop:tightness} we deduce that we can choose a weakly (i.e. in distribution) convergent subsequence $(X^{N_j})_j$. By the Skorokhod representation theorem, there exists a probability space $\left( \hat{\Omega}, \hat{\mathcal{F}}, \hat{\mathbb{P}}\right)$ and a sequence of processes $(\hat{X}^{N_j})_j$ which has the same distribution as $(X^{N_j})_j$ and converges $\hat{\mathbb{P}}$-almost surely in the Skorokhod topology of the paths space. That is, there exists $\hat{X}$ such that $\lim_{j\rightarrow \infty}\hat{X}^{N_j} = \hat{X}$ $\hat{\mathbb{P}}-a.s.$ in $C([0,T]; \mathcal{W}^{-1,2}(\mathbb{T}^2)^3)$. Since $\hat{X}^{N_j}$ and $X^N$ have the same distribution they also satisfy the same SPDE (and the same a priori estimates) and for any test function $\varphi \in C^{\infty}(\mathbb{T}^2)$ we can write (we write $\hat{X}^N$ instead of $\hat{X}^{N_j}$)
\begin{equation*}
\begin{aligned}
     \langle \hat{u}_t^N,\varphi \rangle = \langle \hat{u}_0^N, \varphi\rangle &-\displaystyle\int_0^t\left(\langle \hat{u}^N_s, \mathcal{\widetilde{L}}_{\hat{u}^N_s}^{*}\varphi \rangle + \langle \hat{u}^N_s, \mathcal{D}_{\hat{u}^N_s}^{\star}\varphi\rangle + \langle fk\times \hat{u}^N_s, \varphi\rangle + g\langle \hat{h}^N_s, \nabla\varphi \rangle - \nu \langle \hat{u}^N_s, \Delta \varphi \rangle \right) ds \\
     & + \frac{1}{2}\displaystyle\int_0^t\langle \hat{u}^N_s, (\mathcal{L}_{\sigma}^2)^{\star}\varphi \rangle ds   - \displaystyle\int_0^t \langle \hat{u}^N_s, (\mathcal{L}_{\sigma})^{\star}\varphi \rangle dB_s^N  
\end{aligned}
\end{equation*}
\begin{equation*}
\begin{aligned}
     \langle \hat{h}^N_t, \varphi \rangle = \langle \hat{h}_0^N, \varphi\rangle & -  \displaystyle\int_0^t\left(\langle \hat{h}^N_s, (\mathcal{\widetilde{L}}_{\hat{u}^N_s})^{\star}\varphi\rangle + \langle \hat{h}^N_s, \mathcal{D}_{\hat{u}^N_s}^{\star}\varphi \rangle - \eta \langle  \hat{h}^N_s, \Delta\varphi  \rangle - \frac{1}{2}\langle \hat{h}^N_s, (\mathcal{L}_{\sigma}^2)^{\star}\varphi \rangle\right)ds \\
     & - \displaystyle\int_0^t \langle \hat{h}^N_s, (\mathcal{L}_{\sigma})^{\star}\varphi \rangle dB_s. 
\end{aligned}    
\end{equation*}
We can easily pass to the limit in the linear terms. For the nonlinear term: 
\begin{equation*}
    \begin{aligned}
   & \displaystyle\int_0^t | \langle \hat{u}_s^N, \tilde{\mathcal{L}}_{\hat{u}_s^N}^{\star}\varphi\rangle - \langle \hat{u}_s, \tilde{\mathcal{L}}_{\hat{u}_s}^{\star}\varphi\rangle|ds  \leq \displaystyle\int_0^t |\langle \hat{u}_s^N-\hat{u}_s,\tilde{\mathcal{L}}_{\hat{u}_s^N}^{\star}\varphi \rangle|ds + \displaystyle\int_0^t |\langle \hat{u}, \tilde{\mathcal{L}}_{\hat{u}_s^N}^{\star}\varphi - \tilde{\mathcal{L}}_{\hat{u}_s}^{\star}\varphi\rangle| ds\\
    & \leq \displaystyle\int_0^t\|\hat{u}_s^N-\hat{u}_s\|_2\|\tilde{\mathcal{L}}_{\hat{u}_s^N}^{\star}\varphi\|_2ds +  \displaystyle\int_0^t |\langle \hat{u} \cdot \nabla\varphi, \hat{u}_s^N-\hat{u}_s\rangle|ds\\
  & \leq \left(\displaystyle\int_0^t  \|\hat{u}_s^N-\hat{u}_s\|_{2}^{2}\|\tilde{\mathcal{L}}_{\hat{u}_s^N}^{\star}\varphi\|_2^2ds\right)^{1/2} + \left( \displaystyle\int_0^t \|\hat{u}_s \cdot \nabla \varphi\|_2^2\|\hat{u}_s^N-\hat{u}\|_2^2ds\right)^{1/2}\\
  & \leq \left( \displaystyle\sup_{s\in [0,T]}\|\tilde{\mathcal{L}}_{\hat{u}_s^N}^{\star}\varphi\|_2^2\right)^{1/2}\left(\displaystyle\int_0^T\|\hat{u}_s^N-\hat{u}_s\|_{2}^{2}ds\right)^{1/2} + \left(\displaystyle\int_0^t\|\hat{u}_s\cdot\nabla\varphi\|_2^2\|\hat{u}_s^N-\hat{u}\|_2^2ds \right)^{1/2}\\
  & \leq C\left( \displaystyle\int_0^t\|\hat{u}_s^N-\hat{u}_s\|_{-1,2}^2\|\hat{u}_s^N-\hat{u}_s\|_{1,2}^2ds\right)^{1/4} + C(\|\varphi\|_{2,2})\left( \displaystyle\sup_{s\in[0,T]}\|\hat{u}_s^N-\hat{u}_s\|_{-1,2}^2\displaystyle\int_0^T\|\hat{u}_s\|_{1,2}^2(\|\hat{u}_s\|_{1,2}^2 + \|\hat{u}_s^N\|_{1,2}^2)ds\right)^{1/2}\\
  & \leq C\left( \displaystyle\sup_{s\in[0,T]}\|\hat{u}_s^N-\hat{u}_s\|_{-1,2}^2\left(\displaystyle\int_0^T \|\hat{u}_s^N\|_{1,2}^2ds+ \displaystyle\int_0^T\|\hat{u}_s\|_{1,2}^2ds\right)\right)^{1/4} \\
 & +  C(\|\varphi\|_{2,2})\left( \displaystyle\sup_{s\in[0,T]}\|\hat{u}_s^N-\hat{u}_s\|_{-1,2}^2\displaystyle\int_0^T\|\hat{u}_s\|_{1,2}^2(\|\hat{u}_s\|_{1,2}^2 + \|\hat{u}_s^N\|_{1,2}^2)ds\right)^{1/2}\\
      & \xrightarrow[N\rightarrow \infty]{} 0 
    \end{aligned}
\end{equation*}
The convergence of the stochastic integrals holds true as a direct application of \cite{Jacubowski95}: see similar calculations for similar stochastic integrals with transport noise in \cite{CL3} Proposition 3.12. 
We obtain a martingale solution and combining this with the pathwise uniqueness property, we conclude that the solution is pathwise. The term in the equation for $h$ converge with similar arguments. Then the limit $\tilde{X}$ satisfies the following system, for any test functions $\varphi \in \mathcal{W}^{1,2}(\mathbb{T}^2)$: \\
\begin{equation*}
\begin{aligned}
     \langle \hat{u}_t,\varphi \rangle = \langle \hat{u}_0, \varphi\rangle &-\displaystyle\int_0^t\left(\langle \hat{u}_s, \mathcal{\widetilde{L}}_{\hat{u}_s}^{*}\varphi \rangle + \langle \hat{u}_s, \mathcal{D}_{\hat{u}_s}^{\star}\varphi\rangle + \langle fk\times \hat{u}_s, \varphi\rangle + g\langle \hat{h}_s, \nabla\varphi \rangle - \nu \langle \nabla\hat{u}_s, \nabla \varphi \rangle\right)ds\\
     & - \frac{1}{2}\displaystyle\int_0^t\langle \hat{u}_s, (\mathcal{L}_{\sigma}^2)^{\star}\varphi \rangle ds \\
     & - \displaystyle\int_0^t \langle \hat{u}_s, (\mathcal{L}_{\sigma})^{\star}\varphi \rangle dB_s  
\end{aligned}
\end{equation*}

\begin{equation*}
\begin{aligned}
     \langle \hat{h}_t, \varphi \rangle = \langle \hat{h}_0, \varphi\rangle & -  \displaystyle\int_0^t\left(\langle \hat{h}_s, (\mathcal{\widetilde{L}}_{\hat{u}_s})^{\star}\varphi\rangle + \langle \hat{h}_s, \mathcal{D}_{u_s}^{\star}\varphi \rangle - \eta \langle  \nabla\hat{h}_s, \nabla\varphi  \rangle - \frac{1}{2}\langle \hat{h}_s, (\mathcal{L}_{\sigma}^2)^{\star}\varphi \rangle\right)ds \\
     & - \displaystyle\int_0^t \langle \hat{h}_s, (\mathcal{L}_{\sigma})^{\star}\varphi \rangle dB_s. 
\end{aligned}    
\end{equation*}
We have so far that $X\in C_w\left( [0,T], L^2(\mathbb{T}^2)^3\right)$, but due to Proposition \ref{prop:normcont} it is actually in $ C\left( [0,T], \mathcal{W}^{k,2}(\mathbb{T}^2)^3\right)$ for any $k \geq 0$. 
Since the solution is continuous, the control in \eqref{originalestim} can be obtained from the control in \eqref{estimapprox1} using Fatou's lemma:
\begin{equation*}
    \begin{aligned}
      \displaystyle\sup_{s\leq T}\|X_s\|_2^2 + \displaystyle\int_0^T \|X_s\|_{1,2}^2 ds & \leq \displaystyle\liminf_{N\rightarrow\infty}\left(  \displaystyle\sup_{s\leq T}\|X_s^N\|_2^2 + \displaystyle\int_0^T \|X_s^N\|_{1,2}^2 ds\right) < \infty.
    \end{aligned}
\end{equation*}

Note that for the time integral we can apply Fatou directly (we actually used this in the convergence of the nonlinear term above), but we need continuity to ensure that we can pass to the limit also in the supremum norm. 
\end{proof}
\begin{theorem}\label{globalapprox1}
For each $N$, there exists a unique $L^2$-valued \underline{continuous} \textbf{untruncated} solution $X_t^N$ and a constant $C$ independent of $N$ such that 
\begin{equation}\label{estimapprox1}
    \displaystyle\sup_N  \left(\displaystyle\sup_{s\leq T} \|X_s^N\|_2^2 + \displaystyle\int_0^T\|X_s^N\|_{1,2}^2ds \right)\leq C \|X_0\|_2^2.
\end{equation}
\end{theorem}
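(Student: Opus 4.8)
The plan is to treat the approximating equation for $X^N$ as a finite-dimensional It\^o SDE living on the Littlewood--Paley space $\mathcal{J}_N L^2(\mathbb{T}^2)^3$ and to derive the uniform bound from a pathwise energy identity. First I would record that, since every term of $\tilde{\mathcal{A}}^N$ and $\mathcal{G}^N$ is post-composed with $\mathcal{J}_N$ and $X^N=\mathcal{J}_N X^N$, the coefficients map $\mathcal{J}_N L^2$ into itself; the drift is polynomial (hence locally Lipschitz) in the finitely many active Fourier coefficients, while the diffusion $\mathcal{G}^N(X^N)=\mathcal{J}_N(\sigma\cdot\nabla X^N)$ is linear, with finite Hilbert--Schmidt norm guaranteed by $\|\sigma\|_{k+1,\infty}<\infty$. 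A unique local solution with continuous paths then exists by the cited extension of Skorokhod's theorem \cite{Skorokhod}; global existence, uniqueness and the estimate \eqref{estimapprox1} will all be consequences of an a priori bound that precludes explosion.

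The core step is to apply It\^o's formula to $\|X^N_t\|_2^2=\|u^N_t\|_2^2+\|h^N_t\|_2^2$ and to check that every non-dissipative contribution either cancels or has a favourable sign. The stochastic integral vanishes identically: for each noise mode $\Phi_n$, using $\mathcal{J}_N X^N=X^N$ and the self-adjointness of $\mathcal{J}_N$,
\[
\langle X^N, \mathcal{J}_N(\Phi_n\cdot\nabla X^N)\rangle=\langle X^N,\Phi_n\cdot\nabla X^N\rangle=\tfrac12\int_{\mathbb{T}^2}\Phi_n\cdot\nabla|X^N|^2\,dx=-\tfrac12\int_{\mathbb{T}^2}(\nabla\cdot\Phi_n)|X^N|^2\,dx=0,
\]
since $\nabla\cdot\sigma\,dB=0$; hence the resulting bound will be pathwise, with no expectation needed. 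The Coriolis term drops out because $\langle u^N, fk\times u^N\rangle=0$. The transport and compressibility terms cancel by \eqref{cancel1} (this is exactly where $\alpha=\beta=-\tfrac12$ is used), and the projection is transparent there since $\langle X^N,\mathcal{J}_N(u^N\cdot\nabla X^N+\tfrac12(\nabla\cdot u^N)X^N)\rangle=\langle X^N,u^N\cdot\nabla X^N+\tfrac12(\nabla\cdot u^N)X^N\rangle=0$. Finally, the It\^o--Stokes drift, the diffusion $\tfrac12\mathcal{L}_\sigma^2$ and the It\^o correction $\langle\mathcal{G}^N(X^N),\mathcal{G}^N(X^N)\rangle$ combine as in \eqref{cancel2}: the drift part vanishes because $\nabla\cdot u^S=0$, and, once the projections are accounted for, the diffusion over-compensates the quadratic variation, leaving the favourable remainder $-\sum_n\|(I-\mathcal{J}_N)(\Phi_n\cdot\nabla X^N)\|_2^2\le 0$.

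What survives is the viscous dissipation $-2\gamma\|\nabla X^N\|_2^2$ together with the gravity coupling $2g\langle h^N,\nabla\cdot u^N\rangle$, the one term that does not cancel in the $L^2$ (as opposed to physical) energy. I would absorb it by Young's inequality, $2g|\langle h^N,\nabla\cdot u^N\rangle|\le \gamma\|\nabla u^N\|_2^2+(g^2/\gamma)\|h^N\|_2^2$, so that, using $\|\nabla X^N\|_2^2\ge\|\nabla u^N\|_2^2$,
\[
d\|X^N_t\|_2^2+\gamma\|\nabla X^N_t\|_2^2\,dt\le \frac{g^2}{\gamma}\|X^N_t\|_2^2\,dt
\]
holds pathwise. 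Gronwall's lemma then gives $\sup_{s\le T}\|X^N_s\|_2^2\le e^{(g^2/\gamma)T}\|X_0\|_2^2$, and integrating the differential inequality controls $\int_0^T\|\nabla X^N_s\|_2^2\,ds$; adding $\int_0^T\|X^N_s\|_2^2\,ds\le T\sup_{s\le T}\|X^N_s\|_2^2$ produces \eqref{estimapprox1} with $C=C(T,g,\gamma)$ independent of $N$. Since the $L^2$-norm cannot blow up on $[0,T]$, the local solution extends globally, and pathwise uniqueness follows from the (local) Lipschitz character of the finite-dimensional coefficients.

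The main obstacle is bookkeeping rather than hard analysis: one must verify that the exact cancellations \eqref{cancel1}--\eqref{cancel2}, which hold for the continuous system, survive the Littlewood--Paley truncation. This works precisely because $\mathcal{J}_N$ is a self-adjoint Fourier multiplier that commutes with $\nabla$ and fixes $X^N$, so the projected advection and It\^o--Stokes terms remain divergence-free and energy-neutral, while the projected noise only adds dissipation. The second point worth flagging is that the surviving gravity coupling obstructs a clean conservation law and is exactly what forces the Gronwall factor $e^{(g^2/\gamma)T}$; because this factor is $N$-independent, it does not spoil the uniformity of \eqref{estimapprox1} required to pass to the limit in Theorem \ref{thmlu2}.
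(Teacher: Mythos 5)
Your proof is correct and follows essentially the same route as the paper: It\^o's formula for $\|X_t^N\|_2^2$, the cancellations \eqref{cancel1}--\eqref{cancel2} (which survive the truncation because $\mathcal{J}_N$ is a self-adjoint Fourier multiplier fixing $X^N$), the pathwise vanishing of the stochastic integral due to $\nabla\cdot\sigma=0$, and absorption of the gravity coupling $g\langle u^N,\nabla h^N\rangle$ into the viscous dissipation. If anything, your write-up is slightly sharper in two spots: you correctly note that after projection the diffusion/It\^o-correction balance yields the favourable remainder $-\sum_n\|(I-\mathcal{J}_N)(\Phi_n\cdot\nabla X^N)\|_2^2\le 0$ rather than the exact cancellation the paper asserts, and you make explicit the Young--Gronwall step (with the $N$-independent factor $e^{(g^2/\gamma)T}$) that the paper compresses into ``can be controlled by $\gamma\|X^N\|_{1,2}^2$''.
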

\begin{proof}[\textbf{Proof of Theorem \ref{globalapprox1}.}]
As described above, the (untruncated) approximating system can be written as
\begin{equation*}
dX_t^N + \mathcal{A}^N(X_t^N)dt + \mathcal{G}^N(X_t^N)dB_t^N = \frac{1}{2} \mathcal{J}_N (\nabla \cdot (a \nabla \mathcal{J}_N X_t^N)) + \nu \Delta X_t^N dt
\end{equation*}
that is 
\begin{equation*}\label{originalumlfduntruncated}
      \begin{aligned}
    \hspace{-6mm}  du^{N} & +  fk \times  \mathcal{J}_Nu^{N} dt +  \mathcal{J}_{N}(\mathcal{\widetilde{L}}_{\mathcal{J}_{N}u^{N}} \mathcal{J}_{N}u^{N}) dt + \mathcal{J}_{N}\left(\mathcal{D}_{\mathcal{J}_{N}u^{N}}\mathcal{J}_{N}u^{N}\right)dt \\
      & + g\mathcal{J}_{N}(\nabla \mathcal{J}_{N}h^{N}) dt + \mathcal{J}_{N}\left(\mathcal{L}_{\sigma}\mathcal{J}_{N}u^{N}\right)dB_t^N \\
      & = \nu \Delta u^N dt + \frac{1}{2}\mathcal{J}_{N}\left(\mathcal{L}_{\sigma}^2\mathcal{J}_{N}u^{N}\right)dt 
      \end{aligned}  
\end{equation*}
\vspace{2mm}
\begin{equation*}   
\begin{aligned}
 \hspace{-20mm}  dh^{N} +  & \mathcal{J}_{N} (\mathcal{\widetilde{L}}_{\mathcal{J}_{N} u^{N}} \mathcal{J}_{N}h^{N})dt + \mathcal{J}_{N} (\mathcal{D}_{\mathcal{J}_{N}u^{N}}\mathcal{J}_{N}h^{N} )dt \\
    & + \mathcal{J}_{N}\left(\mathcal{L}_{\sigma}\mathcal{J}_{N}h^{N}\right)dB_t^N = \eta \Delta h^N dt + \frac{1}{2}\mathcal{J}_{N}\left(\mathcal{L}_{\sigma}^2\mathcal{J}_{N}h^{N}\right)dt.
   \end{aligned}
\end{equation*}
By the It\^{o} formula 
\begin{equation*}
    \begin{aligned}
        d\|X_t^N\|_2^2 + 2\gamma\|X_t^N\|_{1,2}^2dt &= -2\langle X_t^N,\mathcal{A}^N(X_t^N) \rangle dt +  \langle X_t^N, \mathcal{J}_N(\mathcal{L}_{\sigma}^2(\mathcal{J}_NX_t^N))\rangle dt \\
        & -2 \langle X_t^N, \mathcal{G}^N(X_t^N) \rangle dB_t^N + \langle \mathcal{G}^N(X_t^N), \mathcal{G}^N(X_t^N)\rangle dt.
    \end{aligned}
\end{equation*}
Note that the nonlinear terms cancel out in the first scalar product, more precisely
\begin{equation*}
\langle \mathcal{J}_NX_t^N, \mathcal{J}_Nu_t^N \cdot \nabla(\mathcal{J}_NX_t^N)\rangle + \frac{1}{2}\langle \mathcal{J}_NX_t^N, (\nabla \cdot \mathcal{J_N}u_t^N)\mathcal{J_N}X_t^N\rangle = 0
\end{equation*}
 by integration by parts.
Similarly, using that $u^S = \frac{1}{2}\nabla \cdot a$ and $a = \sigma\sigma^T$,
\begin{equation*}
   \langle \mathcal{J}_N X_t^N, \mathcal{J}_N u_t^{S,N} \cdot \nabla(\mathcal{J}_NX_t^N \rangle + \langle X_t^N, \mathcal{J}_N(\mathcal{L}_{\sigma}^2(\mathcal{J}_NX_t^N))\rangle + \langle \mathcal{G}^N(X_t^N), \mathcal{G}^N(X_t^N)\rangle = 0.
\end{equation*}
This is the reason why we do not need a truncation in the weak solution case.
The only term which does not vanish on the right hand side is $g\langle u^N, \nabla h^N \rangle$. However, this can be controlled by $\gamma\|X^N\|_{1,2}^2$ and therefore we have
\begin{equation*}
    \|X_t^N\|_2^2 + \gamma\displaystyle\int_0^t\|X_s^N\|_{1,2}^2ds \leq C\|X_0\|_2^2
\end{equation*}
which gives the required bound. Note that since $\nabla \cdot \sigma dB_t = 0$, the control holds pathwise. 
\end{proof}
\subsection{Tightness}
\begin{proposition}\label{prop:tightness}
The approximating sequence $(X^N)_N$ is relatively compact in the space $C([0,T];\mathcal{W}^{-1,2}(\mathbb{T}^2)^3)$. 
\end{proposition}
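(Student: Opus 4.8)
The plan is to apply a stochastic Arzelà–Ascoli (Kolmogorov-type) tightness criterion in $C([0,T];\mathcal{W}^{-1,2}(\mathbb{T}^2)^3)$. Two ingredients are needed: (i) a uniform-in-$N$ bound in a space compactly embedded into $\mathcal{W}^{-1,2}$, which controls the law of $X^N_t$ at each fixed time; and (ii) a uniform Hölder-type estimate on the increments $X^N_t-X^N_s$ in $\mathcal{W}^{-1,2}$, which controls the modulus of continuity. For (i) I would use the pathwise energy bound of Theorem \ref{globalapprox1}, namely $\sup_N\sup_{s\le T}\|X^N_s\|_2^2\le C\|X_0\|_2^2$; since $\mathbb{T}^2$ is compact, the embedding $L^2(\mathbb{T}^2)\hookrightarrow\mathcal{W}^{-1,2}(\mathbb{T}^2)$ is compact by Rellich, so for each $t$ the laws of $X^N_t$ concentrate, up to arbitrarily small probability, on a compact ball $\{\|\cdot\|_2\le R\}$ of $\mathcal{W}^{-1,2}$.

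For (ii) I would write the integral form $X^N_t=X^N_0+\int_0^t\Phi^N_s\,ds+\int_0^t\mathcal{G}^N(X^N_s)\,dB^N_s$, where $\Phi^N$ gathers the advection, compressibility, Coriolis, pressure and diffusion terms, and estimate the drift and the martingale separately. For the drift the key point is that each constituent of $\Phi^N$ lies in $\mathcal{W}^{-1,2}$ with an $L^2(0,T)$-in-time bound uniform in $N$: the linear and diffusion terms satisfy $\|\gamma\Delta X^N\|_{-1,2}+\|\tfrac12\mathcal{J}_N(\nabla\cdot(a\nabla\mathcal{J}_N X^N))\|_{-1,2}\le C\|X^N\|_{1,2}$, while the nonlinear term is handled by moving one derivative onto the test function and using the two-dimensional Ladyzhenskaya inequality, $\|\nabla\cdot(u\otimes u)\|_{-1,2}\le\|u\otimes u\|_2=\|u\|_{L^4}^2\le C\|u\|_2\|u\|_{1,2}$ (and analogously for the $\mathcal{D}_u$ and $h$-transport terms). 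Integrating in time and invoking the pathwise energy bound yields $\int_0^T\|\Phi^N_s\|_{-1,2}^2\,ds\le C$ uniformly in $N$, so the drift integral is bounded in $\mathcal{W}^{1,2}(0,T;\mathcal{W}^{-1,2})\hookrightarrow C^{1/2}([0,T];\mathcal{W}^{-1,2})$, giving $\|\int_s^t\Phi^N_r\,dr\|_{-1,2}\le C|t-s|^{1/2}$ with a deterministic constant.

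For the martingale part I would apply the Burkholder–Davis–Gundy inequality in the Hilbert space $\mathcal{W}^{-1,2}$: for $p\ge2$,
\begin{equation*}
\mathbb{E}\Big\|\int_s^t\mathcal{G}^N(X^N_r)\,dB^N_r\Big\|_{-1,2}^p\le C_p\,\mathbb{E}\Big(\int_s^t\sum_i\|\mathcal{J}_N(\sigma_i\cdot\nabla\mathcal{J}_N X^N_r)\|_{-1,2}^2\,dr\Big)^{p/2}.
\end{equation*}
Because $\mathcal{G}^N$ loses only one derivative, the assumption $\|\sigma\|_{k+1,\infty}<\infty$ gives $\sum_i\|\mathcal{J}_N(\sigma_i\cdot\nabla\mathcal{J}_N X^N_r)\|_{-1,2}^2\le C\|X^N_r\|_2^2\le C$ pathwise, whence $\mathbb{E}\|\int_s^t\mathcal{G}^N(X^N_r)\,dB^N_r\|_{-1,2}^p\le C_p|t-s|^{p/2}$. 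Combining with the drift estimate I obtain $\mathbb{E}\|X^N_t-X^N_s\|_{-1,2}^p\le C|t-s|^{p/2}$ uniformly in $N$, and choosing $p>2$ makes the exponent $p/2>1$. Together with the fixed-time compactness from (i), a standard tightness criterion (Kolmogorov continuity for equicontinuity plus Chebyshev for pointwise compactness) yields relative compactness of $(X^N)_N$ in $C([0,T];\mathcal{W}^{-1,2}(\mathbb{T}^2)^3)$.

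I would flag as the main obstacle the uniform control of the compressible nonlinearity $\nabla\cdot(u\otimes u)$ and the divergence term $\mathcal{D}_u$ in the negative-order norm: unlike the incompressible case, no cancellation is available at this stage, so one must rely squarely on the 2D Ladyzhenskaya interpolation together with the pathwise $L^\infty_t L^2\cap L^2_t\mathcal{W}^{1,2}$ bound to close the $L^2(0,T;\mathcal{W}^{-1,2})$ estimate; dimension two is used essentially here.
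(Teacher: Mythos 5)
Your proposal is correct, and its analytical core coincides with the paper's: both arguments rest on the pathwise energy bound of Theorem \ref{globalapprox1}, on the negative-norm estimates $\|\tilde{\mathcal{A}}^N(X^N_r)\|_{-1,2}\le C\|X^N_r\|_2\|\nabla X^N_r\|_2$ and $\|\mathcal{G}^N(X^N_r)\|_{-1,2}\le C(\sigma)\|X^N_r\|_2$ (obtained exactly as you do, by moving one derivative onto the test function, using the 2D Ladyzhenskaya inequality for the quadratic terms and $\nabla\cdot\sigma=0$ for the noise), and on Cauchy--Schwarz plus Burkholder--Davis--Gundy to reach the increment bound $\mathbb{E}\|X^N_t-X^N_s\|_{-1,2}^p\le C|t-s|^{p/2}$ uniformly in $N$ --- this is precisely the content of the paper's Lemma \ref{fracttightness} and Lemma \ref{negativesobnorm}. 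The only genuine difference is the compactness packaging at the end. The paper integrates the increment bound against $|t-s|^{-1-\beta p}$ to obtain $\mathbb{E}\|X^N\|^p_{\mathcal{W}^{\beta,p}(0,T;\mathcal{W}^{-1,2})}\le C$ for $\beta<1/2$, $\beta p\ge 1$, and then invokes the compact embedding $L^\infty(0,T;L^2)\cap\mathcal{W}^{\beta,p}(0,T;\mathcal{W}^{-1,2})\hookrightarrow C([0,T];\mathcal{W}^{-1,2})$ together with a single Chebyshev estimate on the two norms. You instead feed the same increment bound (with $p>2$, so the exponent $p/2$ exceeds $1$) into the Kolmogorov--Chentsov tightness criterion and supply pointwise tightness via the Rellich embedding of $L^2(\mathbb{T}^2)$ into $\mathcal{W}^{-1,2}(\mathbb{T}^2)$. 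The two routes are interchangeable here: the fractional-Sobolev/Simon route has the advantage that a fixed ball $B^K$ in the intersection space is already relatively compact, so tightness reduces to one Chebyshev bound, while your route makes the equicontinuity mechanism explicit at the price of handling fixed-time tightness separately (which is free from the energy bound anyway). In both arguments, as you correctly flag, the two-dimensional Ladyzhenskaya interpolation is the step that absorbs the compressible nonlinearities $\nabla\cdot(u\otimes u)$ and $\mathcal{D}_u$ in the $\mathcal{W}^{-1,2}$ norm.
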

\begin{proof}[\textbf{Proof}]By a standard Arzela-Ascoli theorem the following embedding is true: $$L^{\infty}(0,T; L^2(\mathbb{T}^2)) \bigcap \mathcal{W}^{\beta,p}(0,T;\mathcal{W}^{-1,2}(\mathbb{T}^2)) \hookrightarrow C([0,T];\mathcal{W}^{-1,2}(\mathbb{T}^2)).$$ 
Let $B^K:= B^1(0,K) \cap B^2(0,K)$ with $B^1(0,K)\in L^{\infty}(0,T;L^2(\mathbb{T}^2)^3)$ and 
$B^2(0,K)\in \mathcal{W}^{\beta,p}(0,T;\mathcal{W}^{-1,2}(\mathbb{T}^2)^3)$. We want $L:=\displaystyle\lim_{K\rightarrow\infty}\displaystyle\sup_N\mathbb{P}\left(X^N \notin B^K \right)=0$. 
By Lemma \ref{fracttightness} and Lemma \ref{negativesobnorm} we have  $$L\leq \displaystyle\lim_{K\rightarrow\infty}\displaystyle\sup_N\left(\frac{\mathbb{E}\left[\displaystyle\sup_{s\in[0,T]}\|X^N\|_2^2\right]}{K^2}+\frac{\mathbb{E}\left[\|X^N\|_{\mathcal{W}^{\beta,p}(0,T;\mathcal{W}^{-1,2}(\mathbb{T}^2))}^p\right]}{K^p}\right)=0,$$ 
which proves the assumption. 
\end{proof}
\begin{lemma}\label{fracttightness}
Let $X^N$ be the solutions of the approximating sequence. Then 
\begin{equation*}
	X^N \in L^p\left(\Omega; \mathcal{W}^{\beta,p}\left(0,T;\mathcal{W}^{-1,2}(\mathbb{T}^2)^3\right)\right)	
\end{equation*}
with $p\in(2,\infty)$ and $\beta \in [0,\frac{1}{2})$ such that $p\beta \geq 1$, and there exists a constant $C=C(\beta,p,T,R)$ independent of $N$ such that 
\begin{equation*}
\mathbb{E}\left[ \|X^N\|_{\mathcal{W}^{\beta,p}(0,T;\mathcal{W}^{-1,2}(\mathbb{T}^2)^3)}^p\right] \leq C.	
\end{equation*}
\end{lemma}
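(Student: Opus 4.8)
The plan is to estimate the two pieces of the Gagliardo--Slobodeckij norm separately, working from the integral form of the approximating equation together with the pathwise a priori bound from Theorem \ref{globalapprox1}. After using Tonelli to exchange $\mathbb{E}$ with the time integrals, it suffices to bound, uniformly in $N$,
$$
\int_0^T \mathbb{E}\|X_t^N\|_{-1,2}^p\, dt \qquad\text{and}\qquad \int_0^T\int_0^T \frac{\mathbb{E}\|X_t^N - X_s^N\|_{-1,2}^p}{|t-s|^{1+\beta p}}\, dt\, ds .
$$
The first is immediate: since $\|\cdot\|_{-1,2}\le C\|\cdot\|_2$ and Theorem \ref{globalapprox1} gives the \emph{pathwise} bound $\sup_{t\le T}\|X_t^N\|_2^2\le C\|X_0\|_2^2$, every moment of $\sup_t\|X_t^N\|_2$ is controlled and the first integral is $\le CT\|X_0\|_2^p$. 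This is the key structural advantage of the weak-solution setting: because the energy bound is pathwise rather than merely in expectation, all $p$-th moments come for free, with no separate higher-moment estimate needed.

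For the increment I would split $X_t^N - X_s^N = (Y_t^N - Y_s^N) + (M_t^N - M_s^N)$, where $Y^N$ collects all the drift (Bochner) integrals and $M^N=\int_0^{\cdot}\mathcal{G}^N(X_r^N)\,dB_r^N$ is the stochastic integral, and estimate each in $\mathcal{W}^{-1,2}$. For the drift, every term of $\mathcal{A}^N(X^N)$ together with the two diffusion terms loses at most one derivative, which is recovered for free in the $\mathcal{W}^{-1,2}$ norm by integration by parts; using the Ladyzhenskaya bound $\|u\otimes u\|_2\le C\|u\|_2\|u\|_{1,2}$ for the quadratic term and the pathwise control $\sup_t\|X_t^N\|_2\le C$, one obtains $\|F_r^N\|_{-1,2}\le C(1+\|X_r^N\|_{1,2})$. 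Hence $F^N\in L^2(0,T;\mathcal{W}^{-1,2})$ pathwise, with a uniform bound inherited from $\int_0^T\|X_r^N\|_{1,2}^2\,dr\le C$. A single application of Hölder in time then gives $\|Y_t^N - Y_s^N\|_{-1,2}\le |t-s|^{1/2}\,\|F^N\|_{L^2(0,T;\mathcal{W}^{-1,2})}$, so the drift integral is pathwise $1/2$-Hölder and $\mathbb{E}\|Y_t^N-Y_s^N\|_{-1,2}^p\le C|t-s|^{p/2}$.

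For the martingale part I would apply the Burkholder--Davis--Gundy inequality, $\mathbb{E}\|M_t^N-M_s^N\|_{-1,2}^p\le C_p\,\mathbb{E}\big(\int_s^t\|\mathcal{G}^N(X_r^N)\|_{L_2}^2\,dr\big)^{p/2}$, where $\|\cdot\|_{L_2}$ is the Hilbert--Schmidt norm of the diffusion coefficient viewed as an operator into $\mathcal{W}^{-1,2}$. Summing over the noise modes and again gaining one derivative by passing to $\mathcal{W}^{-1,2}$, the assumption $\|\sigma\|_{k+1,\infty}<\infty$ together with $\sum_n\Lambda_n<\infty$ yields $\|\mathcal{G}^N(X_r^N)\|_{L_2}^2\le C\|X_r^N\|_2^2\le C$ pathwise. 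Therefore $\big(\int_s^t\|\mathcal{G}^N(X_r^N)\|_{L_2}^2\,dr\big)^{p/2}\le C|t-s|^{p/2}$ and $\mathbb{E}\|M_t^N-M_s^N\|_{-1,2}^p\le C|t-s|^{p/2}$ as well.

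Combining the two contributions gives $\mathbb{E}\|X_t^N-X_s^N\|_{-1,2}^p\le C|t-s|^{p/2}$ with $C$ independent of $N$, so that
$$
\int_0^T\int_0^T \frac{\mathbb{E}\|X_t^N - X_s^N\|_{-1,2}^p}{|t-s|^{1+\beta p}}\,dt\,ds \;\le\; C\int_0^T\int_0^T |t-s|^{\,p/2 - 1 - \beta p}\,dt\,ds ,
$$
and the last integral converges precisely because $\beta<\tfrac12$ makes the exponent $p/2-1-\beta p>-1$. This yields the stated uniform bound. The main obstacle, and the reason the exponent range is exactly this one, is that the drift is only $L^2$-integrable in time (the factor $\|X^N\|_{1,2}$ is controlled only in $L^2(0,T)$, never in $L^p$), so one cannot expect better than $1/2$-Hölder regularity of the drift integral; it is this $1/2$ that forces $\beta<\tfrac12$ and that makes both contributions integrable against the singular kernel $|t-s|^{-1-\beta p}$. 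The remaining hypothesis $p\beta\ge 1$ plays no role in this lemma: it is needed only downstream, to secure the embedding into $C([0,T];\mathcal{W}^{-1,2})$ invoked in the tightness argument.
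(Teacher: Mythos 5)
Your proof is correct and takes essentially the same route as the paper's: the same split of the fractional Sobolev norm, the same Ladyzhenskaya-based bound $\|\tilde{\mathcal{A}}^N(X_r^N)\|_{-1,2}\leq C\|X_r^N\|_2\|X_r^N\|_{1,2}$ (the paper's Lemma \ref{negativesobnorm}), H\"older in time for the drift integral, Burkholder--Davis--Gundy for the martingale part, and the same increment bound $\mathbb{E}\left[\|X_t^N-X_s^N\|_{-1,2}^p\right]\leq C|t-s|^{p/2}$ combined with the kernel-integrability condition $\beta<\tfrac{1}{2}$. Your explicit exploitation of the pathwise character of the bound in Theorem \ref{globalapprox1} (which the paper reaches instead via Cauchy--Schwarz in expectation, ``provided we can control $\sup$ up to any power $p$''), and your remark that $p\beta\geq 1$ is only needed for the downstream embedding into $C([0,T];\mathcal{W}^{-1,2}(\mathbb{T}^2)^3)$, are accurate refinements rather than a different argument.
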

\begin{proof}[\textbf{Proof.}]
Recall that
\begin{equation}\label{eq:wpnorm}
\|X_t^N\|_{\mathcal{W}^{\beta, p}(0, T ; \mathcal{W}^{-1,2}(\mathbb{T}^2))}^p:=\int_{0}^{T}\left\|X_{t}^N\right\|_{\mathcal{W}^{-1,2}(\mathbb{T}^2)}^{p} d t+\int_{0}^{T} \int_{0}^{T} \frac{\left\|X_{t}^N-X_{s}^N\right\|_{\mathcal{W}^{-1,2}(\mathbb{T}^2)}^{p}}{|t-s|^{1+\beta p}} d t d s.
\end{equation} 
We have to show that there exists a constant $C=C\left(
T\right) $ independent of $N$ such that
\begin{equation}\label{eq:negsobnorm}
\mathbb{E}\left[ \left\|X_{t}^{N}-X_{s}^{N}\right\| _{\mathcal{W}^{-1,2}(\mathbb{T}^2)}^{p}%
\right] \leq C |t-s|^{p/2}.
\end{equation}
Using Lemma \ref{negativesobnorm} below we have that for any test function $\varphi \in \mathcal{W}^{1,2}(\mathbb{T}^2)$, using Ladyzhenskaya inequality (for 2D)
\begin{equation*}
    \begin{aligned}
    | \langle \varphi,\tilde{\mathcal{A}}^N(X_r^N) \rangle |
    & \leq C \displaystyle \|\varphi\|_{1,2}\|X_r^N\|_4^2 \\
    & \leq C   \|\varphi\|_{1,2}\|X_r^N\|_2\|\nabla X_r^N\|_2 \\
    \end{aligned} 
\end{equation*}
that is
\begin{equation*}
    \|\tilde{\mathcal{A}}^N(X_r^N)\|_{-1,2} \leq C \|X_r^N\|_2\|\nabla X_r^N\|_2 
\end{equation*}
and therefore using Hölder and Cauchy-Schwartz inequalities
\begin{equation*}
    \begin{aligned}
     \mathbb{E}\left[\left | \displaystyle\int_s^t \|\tilde{\mathcal{A}}^N(X_r^N)\|_{-1,2}dr\right |^p \right]   & \leq C\mathbb{E}\left[ \left | \displaystyle\int_s^t \|X_r^N\|_2\|\nabla X_r^N\|_2 dr\right |^{p}\right] \\
     & \leq C\mathbb{E}\left[ \left( \displaystyle\int_s^t \|X_r^N\|_2^2 dr\right)^{p/2}\left( \displaystyle\int_s^t \|\nabla X_r^N\|_2^2 dr\right)^{p/2} \right]\\
     & \leq  C\mathbb{E}\left[(t-s)^{p/2}\displaystyle\sup_{r\in[s,t]} \|X_r^N\|_2^p\left(\displaystyle\int_0^T \|\nabla X_r^N\|_2^2dr\right)^{p/2} \right] \\
     & \leq C(t-s)^{p/2}\sqrt{\mathbb{E}\left[\displaystyle\sup_{r\in[s,t]} \|X_r^N\|_2^{2p}\right]\mathbb{E}\left[\left(\displaystyle\int_0^T \|\nabla X_r^N\|_2^2dr\right)^{p} \right]} \\
     & \leq \tilde{C}(t-s)^{p/2} 
    \end{aligned}
\end{equation*}
provided we can control $sup$ up to any power $p$. The same argument is used to control the term which contains the Laplacian. 
Finally, for the stochastic integral we use that
\begin{equation*}
    \begin{aligned}
     |\langle \varphi,\mathcal{G}^N(X_r^N) \rangle|
    & \leq C(\|\sigma\|_{1,\infty}) \displaystyle \|\varphi\|_{2}\|X_r^N\|_2 \\
    \end{aligned} 
\end{equation*}
and
\begin{equation*}
    \|\mathcal{G}^N(X_r^N)\|_{-1,2} \leq C(\sigma)\|X_r^N\|_2 
\end{equation*}
Then using the Burkholder-Davis-Gundy inequality we have
\begin{equation*}
    \begin{aligned}
    \mathbb{E}\left[\left | \displaystyle\int_s^t \| \mathcal{G}^N(X_r^N) \|_{-1,2} dB_r\right | ^p \right] & \leq C(p) \mathbb{E}\left[\left( \displaystyle\int_s^t \| \mathcal{G}^N(X_r^N) \|_{-1,2}^2 dr\right )^{p/2} \right]\\
    & \leq C(p) \mathbb{E}\left[\left( \displaystyle\int_s^t C(\sigma)\|X_r^N\|_{2}^2 dr\right )^{p/2} \right] \\
    & \leq C(p,\sigma)(t-s)^{p/2}\mathbb{E}\left[ \displaystyle\sup_{r\in[s,t]} \|X_r^N\|_2^p\right]\\
     & \leq \tilde{C}(t-s)^{p/2}.
    \end{aligned} 
\end{equation*}
Note that the first integral in \eqref{eq:wpnorm} is controlled similarly due to the same Lemma \ref{negativesobnorm}. 
Therefore \eqref{eq:negsobnorm} holds. 
\end{proof}
\begin{remark}
Note that we can do the same for test functions $\varphi \in \mathcal{W}^{k,2}(\mathbb{T}^2)$ and this gives us a control in $W^{-k,2}(\mathbb{T}^2)$. Therefore the solution is in $C([0,T]; \mathcal{W}^{-k,2}(\mathbb{T}^2)^3)$ with $k>1$.
\end{remark}

\subsection{Continuity of the map $t \rightarrow \|X_t\|_{k,2}$ with $k\geq 0$ }\label{normcontinuity}

The following results proves the continuity of the norm processes both for the week and for the strong solution. In both cases, the arguments are the same, the only difference is the use of different sets of estimates of the approximating
sequence

\begin{proposition}\label{prop:normcont}
The map $t \rightarrow \|X_t\|_{k,2}$ is strongly continuous for any $k\geq 0$. 
\end{proposition}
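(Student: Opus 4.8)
The plan is to upgrade the already-established weak continuity $X \in C_w([0,T];\mathcal{W}^{k,2})$ to strong continuity by proving that the \emph{scalar} map $t \mapsto \|X_t\|_{k,2}^2$ is continuous: in a Hilbert space, weak continuity together with convergence of norms forces strong convergence, so this is all that remains. The main tool will be an It\^o energy identity for the limiting process $X$ whose right-hand side is manifestly continuous in $t$, and the key technical point is to obtain this identity as an \emph{equality}.

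First I would justify applying It\^o's formula to $\|\mathcal{J}_m X_t\|_{k,2}^2$ for the spatially truncated process $\mathcal{J}_m X$, where $\mathcal{J}_m$ is the Littlewood-Paley projector. Since $\mathcal{J}_m X$ takes values in a finite-dimensional space of trigonometric polynomials and $X$ solves the limiting equation as an identity in $L^2$ -- with $X \in L^2(0,T;\mathcal{W}^{k+1,2})$ $\mathbb{P}$-a.s. by the Fatou bound inherited from \eqref{approxslnestim} when $k\geq 1$, respectively \eqref{estimapprox1} when $k=0$ -- applying the bounded operator $\mathcal{J}_m$ yields a genuine finite-dimensional It\^o process, to which It\^o's formula applies and gives
\begin{equation*}
\|\mathcal{J}_m X_t\|_{k,2}^2 + 2\gamma\int_0^t \|\mathcal{J}_m X_s\|_{k+1,2}^2\, ds = \|\mathcal{J}_m X_0\|_{k,2}^2 + \int_0^t \mathcal{I}_s^m\, ds + M_t^m,
\end{equation*}
where $\mathcal{I}^m$ collects the (mollified) advective, compressibility, Coriolis, pressure, It\^o-Stokes and $\mathcal{L}_\sigma^2$ drift contributions and $M^m$ is a continuous martingale.

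Next I would let $m \to \infty$. For fixed $t$ the boundary terms converge because $\mathcal{J}_m X_t \to X_t$ strongly in $\mathcal{W}^{k,2}$ (recall $X_t \in \mathcal{W}^{k,2}$ for \emph{every} $t$ by weak continuity). The dissipation term is the delicate but favourable one: since $\|\mathcal{J}_m X_s\|_{k+1,2}^2$ increases to $\|X_s\|_{k+1,2}^2$ and is dominated by $\|X_s\|_{k+1,2}^2 \in L^1(0,T)$, monotone convergence yields \emph{equality} in the limit. This is exactly why mollifying the limit $X$ -- rather than merely passing to the limit in the approximating identity, which would give only a weak-lower-semicontinuity inequality -- is essential. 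The martingale term converges by the Burkholder-Davis-Gundy inequality together with $\mathcal{J}_m\mathcal{G}(X_s) \to \mathcal{G}(X_s)$. The hard part is the drift $\int_0^t \mathcal{I}_s^m\, ds$: one must show the commutators between $\mathcal{J}_m$ and the nonlinear operators vanish and that the limiting integrand lies in $L^1(0,T)$. Here I would exploit the cancellation identities \eqref{cancel1}-\eqref{cancel2}, so that no genuine top-order derivative survives in the energy balance, together with the two-dimensional Ladyzhenskaya inequality $\|f\|_{L^4}^2 \leq C\|f\|_{L^2}\|\nabla f\|_{L^2}$ to bound the advective pairing by a multiple of $\|X_s\|_{1,2}^2$ and deduce time-integrability from $X \in L^\infty(0,T;\mathcal{W}^{k,2}) \cap L^2(0,T;\mathcal{W}^{k+1,2})$. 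This is the main obstacle, and it is precisely where the restriction to two spatial dimensions enters.

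The outcome is the energy equality
\begin{equation*}
\|X_t\|_{k,2}^2 = \|X_0\|_{k,2}^2 - 2\gamma\int_0^t\|X_s\|_{k+1,2}^2\, ds + \int_0^t \mathcal{I}_s\, ds + M_t,
\end{equation*}
whose right-hand side is continuous in $t$, being a sum of Lebesgue integrals of $L^1(0,T)$ integrands and a continuous martingale. Hence $t \mapsto \|X_t\|_{k,2}$ is $\mathbb{P}$-a.s.\ continuous. Combining this with $X \in C_w([0,T];\mathcal{W}^{k,2})$, for any $t_n \to t$ we have $X_{t_n} \rightharpoonup X_t$ and $\|X_{t_n}\|_{k,2} \to \|X_t\|_{k,2}$, which in the Hilbert space $\mathcal{W}^{k,2}(\mathbb{T}^2)^3$ forces $X_{t_n} \to X_t$ strongly; therefore $X \in C([0,T];\mathcal{W}^{k,2})$, as claimed. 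The same argument applies verbatim to the weak solution ($k=0$), using \eqref{estimapprox1} in place of \eqref{approxslnestim}.
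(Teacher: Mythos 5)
Your plan has the right endgame (norm continuity plus weak continuity forces strong continuity in the Hilbert space $\mathcal{W}^{k,2}$, which is exactly how the paper concludes), but the step you yourself identify as ``the main obstacle'' fails as stated for the case this proposition mostly exists for. You propose to control the drift $\int_0^t\mathcal{I}^m_s\,ds$ by invoking the cancellations \eqref{cancel1}--\eqref{cancel2} ``so that no genuine top-order derivative survives in the energy balance'', and you close by saying the argument applies ``verbatim'' to $k=0$. This is inverted: \eqref{cancel1}--\eqref{cancel2} \emph{are} the $k=0$ ($L^2$-pairing) identities, and \eqref{cancel1} kills the advective-plus-compressibility pairing only when $\alpha=\beta=-\tfrac12$ (pairing the momentum drift with $u$ leaves $(\alpha+\tfrac12)\langle \nabla\cdot u,\,|u|^2\rangle$). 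The paper states explicitly that these cancellations ``do not hold in higher order Sobolev spaces $\mathcal{W}^{k,2}(\mathbb{T}^2)$ with $k\geq 1$'', and for $k\ge 1$ Proposition \ref{prop:normcont} concerns the truncated solution of the general system \eqref{gencl} with arbitrary $\alpha,\beta$. So in the strong-solution case your stated mechanism is unavailable, and the bound ``by a multiple of $\|X_s\|_{1,2}^2$'' is not the right shape either. The repair is to estimate rather than cancel: bound the pairings $\langle\partial^{\theta}\mathcal{J}_m X_s,\partial^{\theta}\mathcal{J}_m(\cdot)\rangle$ exactly as in the terms $T_1$--$T_8$ of Lemma \ref{le:approxslnestim} (Moser/commutator estimates for the transport and divergence terms, and $\nabla\cdot\sigma=0$ together with $\|\sigma\|_{k+1,\infty}<\infty$ for the noise/It\^o-correction pair), giving an integrand dominated by $C(R)\|X_s\|_{k,2}\|X_s\|_{k+1,2}+C\|X_s\|_{k,2}^2$, which is a.s.\ in $L^1(0,T)$ by Cauchy--Schwarz since $X\in L^\infty(0,T;\mathcal{W}^{k,2})\cap L^2(0,T;\mathcal{W}^{k+1,2})$ pathwise. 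With that substitution, and uniform-in-$m$ domination for the commutator remainders, your scheme does close.

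Even once repaired, your route is genuinely different from the paper's. You derive an exact It\^o energy \emph{identity} for the limit process by mollifying the limiting equation with $\mathcal{J}_m$, applying It\^o's formula in the finite-dimensional projected space, and passing $m\to\infty$ (monotone convergence for the dissipation, BDG for the martingale part); continuity of $t\mapsto\|X_t\|_{k,2}^2$ is then read off the continuous right-hand side. The paper never applies It\^o's formula to the limit at all: it proves the fourth-moment increment bound $\mathbb{E}\bigl[(\|X_t\|_{k,2}^2-\|X_s\|_{k,2}^2)^4\bigr]\le C|t-s|^2$ by applying It\^o's formula to the \emph{approximating} processes $\check{X}^N$ (where regularity is free), using the a priori estimates of Lemma \ref{le:approxslnestim} for $k\ge2$ and Lemma \ref{negativesobnorm} for $k=0$ to make the bound uniform in $N$, transferring it to $X$ via a Fatou/liminf step, and concluding by the Kolmogorov continuity criterion. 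Your approach yields more (an energy equality, not just continuity) and avoids fourth moments, but it requires redoing all drift and commutator estimates at the level of the limit; the paper's approach recycles estimates it has already proved and sidesteps the lack of regularity of $X$ --- precisely the difficulty it flags elsewhere (``we do not have enough regularity to apply the It\^o formula directly'') --- at the cost of the liminf transfer of the increment bound from $\check{X}^N$ to $X$.
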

\begin{proof}[\textbf{Proof of Proposition \ref{prop:normcont}.}]
$\left.\right.$\\
Case $k\geq 2$ (strong solution). We use here the estimates in Lemma \ref{le:approxslnestim} in the Appendix. With arguments similar to the ones from the proof of Lemma \ref{fracttightness} we show that there exists $C$ such that
\begin{equation*}\label{eq:normcont}
\mathbb{E}\left[\left( \|X_t\|_{k,2}^2 - \|X_s\|_{k,2}^2\right)^4 \right] \leq C|t-s|^2
\end{equation*}
More precisely, for the approximating sequence $(\ckX^N)_N$ we have 
\begin{equation*}
    \begin{aligned}
       & \mathbb{E}\left[\left( \|\ckX_t^N\|_{k,2}^2 - \|\ckX_s^N\|_{k,2}^2\right)^4\right] \\
       & \leq C \displaystyle\sum_{|\theta|\leq k}\mathbb{E}\left[\left(\displaystyle\int_s^t \left(-2\langle \partial^{\theta} \ckX_r^N, \partial^{\theta} \check{\mathcal{A}}^N(\ckX_r^N)\rangle + \langle \partial^{\theta}\ckX_r^N, \partial^{\theta}(\gamma\Delta \ckX_r^N)\rangle + \langle \partial^{\theta}\check{\mathcal{G}}^N(\ckX_r^N),\partial^{\theta}\check{\mathcal{G}}^N(\ckX_r^N)\rangle \right)dr \right)^4\right]\\
& + C \displaystyle\sum_{|\theta|\leq k}\mathbb{E}\left[\left( -2 \displaystyle\int_s^t\langle\partial^{\theta}\ckX_r^N,\partial^{\theta}\check{\mathcal{G}}^N(\ckX_r^N)\rangle dB_r  \right)^4\right] \\
& \leq C_1\mathbb{E}\left[\left(\displaystyle\int_s^t\|\ckX_r^N\|_{k,2}^2 + \epsilon_1(\gamma) \|\ckX_r^N\|_{k+1,2}^2 + \epsilon_2(\gamma)\|\ckX_r^N\|_{k+1,2}^2 \right)^2\right]+ C_2\mathbb{E}\left[\left(\displaystyle\int_s^t|\langle\partial^{\theta}\ckX_r^N,\partial^{\theta}\check{\mathcal{G}}^N(\ckX_r^N)\rangle|^2 dr \right)^2\right] \\
& \leq \tilde{C}_1(t-s)^2\mathbb{E}\left[\displaystyle\sup_{r\in[s,t]}\|\ckX_r^N\|_{k,2}^4\right] + \tilde{C}_2(t-s)^2\mathbb{E}\left[\displaystyle\sup_{r\in[s,t]}\|\ckX_r^N\|_{k,2}^4\right] \\
       &\leq C(t-s)^2
    \end{aligned}
\end{equation*}
Then
\begin{equation*}
    \begin{aligned}
       & \mathbb{E}\left[\left( \|\ckX_t\|_{k,2}^2 - \|\ckX_s\|_{k,2}^2\right)^4\right] \leq \displaystyle\liminf_{N\rightarrow\infty}\mathbb{E}\left[\left( \|\ckX_t^N\|_{k,2}^2 - \|\ckX_s^N\|_{k,2}^2\right)^4\right] 
       \leq C(t-s)^2.
    \end{aligned}
\end{equation*}
Case $k=0$ (weak solution). Similar calculations hold due to Lemma \ref{negativesobnorm} below. 
\end{proof}
\begin{remark}
For the estimates in the proof of Proposition \ref{prop:normcont} we actually use a control in the norm $\|\cdot\|_{T,k,2}$ but given the fact that we know that the solution $X$ exists and its $\mathcal{W}^{k+1,2}$ norm is $L^2$-integrable we can choose $epsilon$ in Lemma \eqref{approxslnestim}. in the Appendix so that all terms which involve $\displaystyle\int_s^t \|X\|_{k+1,2}^2dr$ disappear, and therefore the control is then true in the norm $\|\cdot\|_{k,2}$ as required. 
\end{remark}
\begin{lemma}\label{negativesobnorm}
There exists two constants $C_1$ and $C_2$ such that 
\begin{subequations}
\begin{alignat}{2}
   & \displaystyle\int_0^T\|\tilde{\mathcal{A}}^N(X_s^N)\|_{-1,2}^2 ds \leq C_1 \\
   & \displaystyle\int_0^T\|\mathcal{G}^N(X_s^N)\|_{-1,2}^2 ds \leq C_2.
\end{alignat}
\end{subequations}
\end{lemma}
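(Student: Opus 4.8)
The plan is to estimate both $\mathcal{W}^{-1,2}$-norms pointwise in time through the duality characterisation $\|Y\|_{-1,2}=\sup_{\|\varphi\|_{1,2}\le 1}\langle \varphi,Y\rangle$, integrating by parts so as to move every spatial derivative off $X^N_s$ and onto the test function $\varphi$, and then to integrate in $s$ and invoke the uniform a priori bound \eqref{estimapprox1}. Since $\mathcal{J}_N$ is a self-adjoint contraction on every $\mathcal{W}^{m,2}(\mathbb{T}^2)$, all the constants produced below are independent of $N$, and since \eqref{estimapprox1} holds $\mathbb{P}$-a.s. the resulting bounds hold $\mathbb{P}$-a.s. with deterministic constants.

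First I would handle the noise term. Writing $\langle\varphi,\mathcal{J}_N(\sigma\cdot\nabla\mathcal{J}_N X^N_s)\rangle=\langle \mathcal{J}_N\varphi,\sigma\cdot\nabla\mathcal{J}_N X^N_s\rangle$, a single integration by parts together with the divergence-free condition $\nabla\cdot\sigma=0$ throws the derivative onto $\varphi$, giving $|\langle\varphi,\mathcal{G}^N(X^N_s)\rangle|\le \|\sigma\|_{\infty}\|\nabla\varphi\|_2\|X^N_s\|_2$, hence $\|\mathcal{G}^N(X^N_s)\|_{-1,2}\le C(\sigma)\|X^N_s\|_2$. Integrating in time and using $\sup_{s\le T}\|X^N_s\|_2^2\le C\|X_0\|_2^2$ from \eqref{estimapprox1} yields the second bound with $C_2=C(\sigma)\,T\,\|X_0\|_2^2$.

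For $\tilde{\mathcal{A}}^N$ I would split into linear, diffusive and nonlinear parts. The Coriolis term and the pressure term (after sending the gradient in $g\nabla h$ onto $\varphi$) each contribute $\|\cdot\|_{-1,2}\le C\|X^N_s\|_2$; the It\^o--Stokes drift term $u^S\cdot\nabla(\cdot)$, with $u^S=\tfrac12\nabla\cdot a$ smooth and bounded, also gives $\le C\|X^N_s\|_2$; and the diffusion $\mathcal{L}_{\sigma}^2=\nabla\cdot(a\nabla\,\cdot)$ yields, after one integration by parts, $|\langle\varphi,\mathcal{L}_{\sigma}^2 X^N_s\rangle|\le \|a\|_{\infty}\|\nabla\varphi\|_2\|\nabla X^N_s\|_2$, i.e.\ $\|\cdot\|_{-1,2}\le C\|\nabla X^N_s\|_2$. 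The crucial term is the nonlinear one: here I would use that it is in conservative (divergence) form, $\nabla\cdot(u^N_s\otimes u^N_s)$ and $\nabla\cdot(h^N_s u^N_s)$, so that integration by parts places the single derivative entirely on $\varphi$ and leaves a purely quadratic expression paired with $\nabla\varphi$; Cauchy--Schwarz and the two-dimensional Ladyzhenskaya inequality $\|f\|_4^2\le C\|f\|_2\|\nabla f\|_2$ then give $\|(\text{nonlinear})\|_{-1,2}\le C\|X^N_s\|_2\|\nabla X^N_s\|_2$, exactly as in the proof of Lemma \ref{fracttightness}. Collecting, $\|\tilde{\mathcal{A}}^N(X^N_s)\|_{-1,2}\le C\big(\|X^N_s\|_2+\|\nabla X^N_s\|_2+\|X^N_s\|_2\|\nabla X^N_s\|_2\big)$.

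Squaring and integrating in $s$ produces the three pieces $\int_0^T\|X^N_s\|_2^2\,ds$, $\int_0^T\|\nabla X^N_s\|_2^2\,ds$ and $\int_0^T\|X^N_s\|_2^2\|\nabla X^N_s\|_2^2\,ds$; the first is $\le T\sup_s\|X^N_s\|_2^2$, the second is controlled directly, and the last is bounded by $(\sup_{s\le T}\|X^N_s\|_2^2)\int_0^T\|\nabla X^N_s\|_2^2\,ds$. Every factor is controlled, uniformly in $N$, by \eqref{estimapprox1}, which gives the first bound with $C_1$ depending only on $T,\sigma,a,f,g$ and $\|X_0\|_2$. I expect the only delicate point to be the nonlinear term: its $\mathcal{W}^{-1,2}$-norm lands at the level $\|X^N_s\|_2\|\nabla X^N_s\|_2$ precisely because (i) the advective and compressibility terms are in divergence form, so that no derivative survives on $X^N_s$ after integration by parts, and (ii) Ladyzhenskaya holds in this sharp form only in two dimensions, so that the square of this norm is exactly integrable against the available $L^\infty_t L^2_x\cap L^2_t\dot H^1_x$ control --- the borderline balance that fails in three dimensions, consistent with the remarks preceding the statement.
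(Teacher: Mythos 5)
Your proposal is correct and follows essentially the same route as the paper: rewrite the nonlinear terms in divergence form, move the derivative onto the test function, apply the two-dimensional Ladyzhenskaya inequality to get $\|\cdot\|_{-1,2}\le C\|X^N_s\|_2\|\nabla X^N_s\|_2$, handle $\mathcal{G}^N$ via $\nabla\cdot\sigma=0$ so that $\sigma\cdot\nabla X=\nabla\cdot(\sigma X)$, and close with the pathwise a priori bound \eqref{estimapprox1}. The only difference is that you spell out the linear, Coriolis, It\^o--Stokes and diffusion contributions explicitly, which the paper dispatches with ``the same type of arguments''; this is a matter of detail, not of method.
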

\begin{proof}[\textbf{Proof.}]
We proceed below with the analysis for the nonlinear terms, as the one for the linear term follows directly with the same type of arguments. 
Using the fact that $(\mathcal{L}_u+\mathcal{D}_u)u = \nabla \cdot \left(u\otimes u\right)$ and Ladyzhenskaya's inequality we write
\begin{equation*}
    \begin{aligned}
        |\langle \varphi, \nabla \cdot (u\otimes u)\rangle| & = |\langle \nabla\varphi,  u\otimes u \rangle| \\
        & \leq C \|\varphi\|_{1,2}\|u\|_4^2\\
        & \leq C\|\varphi\|_{1,2}\|u\|_2\|\nabla u\|_2.
    \end{aligned}
\end{equation*}
Therefore 
\begin{equation*}
    \|\nabla \cdot (u \otimes u)\|_{-1,2}^2 \leq C\|u\|_2\|\nabla u\|_2
\end{equation*}
and
\begin{equation*}
    \begin{aligned}
        \displaystyle\int_0^T\|\nabla \cdot (u \otimes u)\|_{-1,2}^2 ds  & \leq C\displaystyle\sup_{s\leq T}\|u\|_2^2\displaystyle\int_0^T\|\nabla u\|_2^2ds \leq \|X_0\|_2^2.
    \end{aligned}
\end{equation*}
The nonlinear part in the equation for $h$ is done similarly, using the fact that $u \cdot \nabla h + (\nabla \cdot u)h = \nabla \cdot (hu)$. The control for $\displaystyle\int_0^T\|\tilde{\mathcal{A}}^N(X^N)\|_{-1,2}ds$ holds as requested. For the control of $\displaystyle\int_0^T\|\mathcal{G}^N(X^N)\|_{-1,2}ds$ we proceed in the same manner, using the fact that $\nabla \cdot \sigma = 0$ and therefore we can write $\sigma \cdot \nabla X = \nabla \cdot (\sigma X).$
\end{proof}

\section{Appendix A: Littlewood-Paley approximations}
In this section we introduce the Littlewood-Paley decomposition which is used to construct the approximating sequence of solutions. For more details on this topic in the periodic domain see \cite{Dai}, \cite{Triebelbook}. 

Consider the orthonormal basis $(e_{\lambda})_{\lambda \in \mathbb{Z}^2}$ of $L^2(\mathbb{T}^2)$ with
  $  e_{\lambda}(x) := e^{i\lambda \cdot x} \ \ \ \hbox{for any} \ \ x \in \mathbb{T}^2. $
Any function $f \in L^2(\mathbb{T}^2)$ can be written as 
\begin{equation}\label{fourierseries}
    f(x) = \displaystyle\sum_{\lambda \in \mathbb{Z}^2} \hat{f}(\lambda) e^{i \lambda \cdot x} \ \ \ \hbox{for any} \ \ x \in \mathbb{T}^2,
\end{equation}
where $\hat{f}(\lambda)$ are the Fourier coefficients (or \textit{Fourier modes}) of $f$ given by \footnote{Note that $\hat{f}:\mathbb{Z}^2\rightarrow\mathbb{R}$ is the Fourier transform of $f$. In \eqref{fourierseries} we have the Fourier series of $f$ in $ x \in \mathbb{T}^2$.}
\begin{equation}\label{fouriermodes}
    \hat{f}(\lambda) = \frac{1}{(2\pi)^2}\displaystyle\int_{\mathbb{T}^2} f(y)e^{-i\lambda \cdot y} dy.
\end{equation}
Consider the sequence of subsets $(B_j)_j \subset \mathbb{Z}^2$ given by
\begin{equation*}
B_{j}:= \{ \lambda=(\lambda_1,\lambda_2) \in \mathbb{Z}^2 | |\lambda_m| \leq 2^j, m=1,2 \}.	
\end{equation*}
For any dyadic integer $N=2^j$ one can define $\mathcal{J}_Nf : \mathbb{T}^2 \rightarrow \mathbb{C}$ such that
\begin{equation}\label{dyadicsum}
\begin{aligned}
    \mathcal{J}_Nf(x)&:= \displaystyle\sum_{j=1}^{N}\left(\displaystyle\sum_{\lambda\in B_{j} \smallsetminus B_{j-1}}\hat{f}(\lambda)e^{i\lambda \cdot x}\right) +  \displaystyle\sum_{\lambda\in B_{0} }\hat{f}(\lambda)e^{i\lambda \cdot x}.\\
 \end{aligned}   
\end{equation}
Then $J_jf : \mathbb{T}^2 \rightarrow \mathbb{C}$ given by 
\begin{subequations}\label{dyadicblock0}
\begin{alignat}{2}
& J_0f(x) := \displaystyle\sum_{\lambda\in B_{0} }\hat{f}(\lambda)e^{i\lambda \cdot x} \\
& J_jf(x) := \displaystyle\sum_{\lambda\in B_{j} \smallsetminus B_{j-1}}\hat{f}(\lambda)e^{i\lambda \cdot x} \ \ \  \hbox{for} \ \ j\geq 1
\end{alignat}
\end{subequations}
are the \textit{Littlewood-Paley blocks}
and we can write 
\begin{equation}\label{sumN}
    \mathcal{J}_Nf(x) = \displaystyle\sum_{j=0}^N J_jf(x).
\end{equation}
The \textit{Littlewood-Paley decomposition} of $f \in L^2(\mathbb{T}^2)$ is given by
\begin{equation}\label{LPdecomposition}
    f(x) = \displaystyle\sum_{j=0}^{\infty} J_jf(x).
\end{equation}
The Littlewood-Paley decomposition \eqref{sumN} is based on a decomposition of the function $f$ into a countable sum of functions for which the corresponding Fourier transforms are supported on 'blocks' of size $2^j$. $\mathcal{J}_N$ corresponds to a localisation of the function $f$ to frequencies of order $N$. These blocks provide a decomposition of the frequency space associated with $f$. From \eqref{dyadicsum} we can see that
\begin{equation}
    \mathcal{J}_Nf(x) = \displaystyle\sum_{\lambda \in B_N} e^{i\lambda\cdot x}\hat{f}(\lambda)
\end{equation}
and the corresponding decomposition can also be written as a convolution:
\begin{equation}
    \mathcal{J}_Nf(x) = \left( \displaystyle\sum_{\lambda \in B_N} e^{i\lambda\cdot x} \right) \star f (x).
\end{equation}
An important feature which is more explicitly described using the LP decomposition is the regularity of the functions: this can be characterised in terms of the decay properties of its corresponding dyadic blocks with respect to the summation index. Such a representation allows one to obtain a better control on the derivatives, since the dyadic blocks are orthogonal to one another. More precisely, by Plancherel's identity\footnote{See e.g. Lemma 2.11 in \cite{Dai}}, the Sobolev norm of $f$ can be expressed as 
\begin{equation*}
    \|f\|_{\mathcal{W}^{k,2}(\mathbb{T}^2)} \equiv \|2^{jk}\|J_j f\|_{L^2(\mathbb{T}^2)}\|_{\ell^2(\mathbb{Z}^2)}.
\end{equation*}
By generalising this for blocks from $L^p(\mathbb{T}^2)$ one can introduce the Besov spaces see e.g. \cite{Dai} Definition 2.9. 
Let $\Lambda^s = (-\Delta)^{s/2}, s\geq0$, be the fractional laplacian operator on $\mathbb{T}^2$. 
We introduce below the main properties of the Littlewood-Paley blocks, based on Lemma 2.7 and Proposition 2.8 in \cite{Dai}.\footnote{For complete proofs see \cite{Dai}.} The \textit{Bernstein type inequalities} can be proven using the fact that $\widehat{\Lambda^sf}(\lambda) = |\lambda|^s\hat{f}(\lambda),$ with $\lambda\in\mathbb{Z}^2$. 
\begin{lemma}
Let $f\in\mathcal{W}^{k,2}(\mathbb{T}^2), k\geq 0$ and $J_jf$, $\mathcal{J}_Nf$ as above, with $j\geq 0, N>0$. Then 
\begin{itemize}
    \item [i.] There exist some constants $C_1, C_2$ such that 
    \begin{equation}
        \|J_jf\|_{L^2(\mathbb{T}^2)} \leq C_1\|f\|_{L^2(\mathbb{T}^2)}
    \end{equation}
    \begin{equation}
        \|\mathcal{J}_Nf\|_{L^2(\mathbb{T}^2)} \leq C_2\|f\|_{L^2(\mathbb{T}^2)}
    \end{equation}
\item [ii.] For any $j_1\neq j_2$ one has that
\begin{equation*}
    J_{j_1}J_{j_2}f = 0.
\end{equation*}
\item [iii.] \textit{(Bernstein inequality 1)} There exist some constants $C, \tilde{C}>0$ such that 
\begin{equation}
    \|J_j\Lambda^sf\|_{L^2(\mathbb{T}^2)} \leq C2^{sj}\|J_jf\|_{L^2(\mathbb{T}^2)} \leq \tilde{C}2^{sj}\|f\|_{L^2({\mathbb{T}^2})}
\end{equation}
\item [iv.] \textit{(Bernstein inequality 2)} There exist some constants $0<C_1<C_2$ such that 
\begin{equation}
    C_1 2^{sj}\|J_jf\|_{L^2(\mathbb{T}^2)} \leq \|J_j\Lambda^sf\|_{L^2(\mathbb{T}^2)} \leq C_22^{sj}\|J_jf\|_{L^2(\mathbb{T}^2)}. 
\end{equation}
\end{itemize}
\end{lemma}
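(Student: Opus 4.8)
The plan is to push every assertion to the Fourier side, where each block $J_j$ and each partial sum $\mathcal{J}_N$ acts as multiplication by the indicator of a dyadic index set, and then to read everything off from Parseval's identity for the orthonormal family $(e_\lambda)_{\lambda\in\mathbb{Z}^2}$. Writing $g=\sum_\lambda c_\lambda e_\lambda$, I would use $\|g\|_{L^2(\mathbb{T}^2)}^2=c\sum_\lambda|c_\lambda|^2$ (with $c$ the fixed normalisation constant of the basis), so that all four claims become elementary pointwise bounds on the coefficients $|\hat f(\lambda)|$. The only analytic input beyond Parseval is the relation $\widehat{\Lambda^s f}(\lambda)=|\lambda|^s\hat f(\lambda)$ recorded just above the statement.

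For (i) and (ii) there is nothing further to do. Since $J_j$ restricts the expansion to $B_j\setminus B_{j-1}$ and $\mathcal{J}_N$ to $B_N$, I would simply note that
$$\|J_j f\|_{L^2}^2=c\!\!\sum_{\lambda\in B_j\setminus B_{j-1}}\!\!|\hat f(\lambda)|^2\le c\sum_{\lambda\in\mathbb{Z}^2}|\hat f(\lambda)|^2=\|f\|_{L^2}^2,$$
with the identical bound for $\mathcal{J}_N$, so $C_1=C_2=1$ work. For (ii) I would observe that $B_0$ together with the annuli $\{B_j\setminus B_{j-1}\}_{j\ge1}$ partition $\mathbb{Z}^2$; hence for $j_1\neq j_2$ the frequency supports of $J_{j_1}$ and $J_{j_2}$ are disjoint, and applying $J_{j_1}$ after $J_{j_2}$ projects onto an empty set of modes, giving $0$.

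For the Bernstein inequalities (iii)--(iv) the one thing that needs care is the comparison between the Euclidean modulus $|\lambda|$ carrying $\Lambda^s$ and the $\ell^\infty$-cubes $B_j$. For $\lambda\in B_j\setminus B_{j-1}$ I would use $\max_m|\lambda_m|\le 2^j$ and $\max_m|\lambda_m|>2^{j-1}$, which in two dimensions give $2^{j-1}<|\lambda|\le\sqrt2\,2^j$. Inserting $|\lambda|^{2s}$ into the Parseval sum for $\|J_j\Lambda^s f\|_{L^2}^2$ and bounding it above by $2^{s}2^{2sj}$ yields (iii) with $C=2^{s/2}$, after which the $\tilde C$-bound is immediate from (i). The two-sided estimate (iv) comes out by bounding $|\lambda|^{2s}$ below by $(2^{j-1})^{2s}$ and above by $(\sqrt2\,2^j)^{2s}$, giving $C_1=2^{-s}$ and $C_2=2^{s/2}$.

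I do not expect a genuine obstacle here; the proof is bookkeeping. The two points to watch are the cube-versus-ball geometry — it is the $d=2$ equivalence $\|\lambda\|_\infty\le|\lambda|\le\sqrt2\,\|\lambda\|_\infty$ that pins the numerical constants — and the low-frequency block. For $j=0$ the set $B_0$ contains $\lambda=0$, where $|\lambda|^s=0$ for $s>0$, so the lower bound in (iv) cannot hold; I would therefore read (iv) for the genuine annular pieces $j\ge1$, the $j=0$ piece being covered only by the one-sided bound (iii).
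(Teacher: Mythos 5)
Your proof is correct. The paper itself does not actually prove this lemma: it imports it from Lemma 2.7 and Proposition 2.8 of the cited reference of Dai et al.\ (with a footnote deferring the complete proofs there), noting only that the Bernstein-type inequalities follow from the multiplier identity $\widehat{\Lambda^s f}(\lambda)=|\lambda|^s\hat f(\lambda)$; your Parseval computation, together with the cube-versus-ball comparison $\|\lambda\|_\infty\le|\lambda|\le\sqrt2\,\|\lambda\|_\infty$ on the dyadic sets $B_j\setminus B_{j-1}$, is precisely the standard argument behind those results, so your route coincides with the one the paper points to. Your remark about the low-frequency block is moreover a genuine and correct refinement of the statement as printed: since $B_0$ contains $\lambda=0$, taking $f$ constant gives $\|J_0\Lambda^s f\|_{L^2}=0$ while $\|J_0 f\|_{L^2}>0$ for $s>0$, so the lower bound in (iv) can only hold for the annular blocks $j\ge 1$, exactly as you say.
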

\begin{remark} \label{importantLPremark}
Let $\alpha$ be a multi-index of length $|\alpha|=k$.
For $s = 2\alpha $  and $N=2^j$ we have 
\begin{equation*}
\|J_j(\partial^{\alpha}f)\|_{L^2(\mathbb{T}^2)} \leq CN^{2\alpha}\|J_jf\|_{L^2(\mathbb{T}^2)} \leq \tilde{C}N^{2\alpha}\|f\|_{L^2(\mathbb{T}^2)}.
\end{equation*}
Using \eqref{sumN} and \eqref{LPdecomposition} we can write 
\begin{equation*}
\|\partial^{\alpha}f\|_{L^2{\mathbb{T}^2}}^2 \leq 
\displaystyle\sum_{j\geq 1}\|J_j(\partial^{\alpha}f)\|_2^2.
\end{equation*}
Then 
\begin{equation*}
\|f\|_{\mathcal{W}^{k,2}(\mathbb{T}^2)}^2 \leq 
\displaystyle\sum_{j\geq 1}CN ^{2k} \|J_jf\|_{L^2(\mathbb{T}^2)}^2	
\end{equation*}
and
\begin{equation*}
\begin{aligned}
\|f-\mathcal{J}_{N}f\|_{\mathcal{W}^{k,2}(\mathbb{T}^2)}^2 & \leq \displaystyle\sum_{j\geq N+1}\|J_jf\|_{\mathcal{W}^{k,2}(\mathbb{T}^2)}^2 \leq \frac{1}{N}\|f\|_{\mathcal{W}^{k+1,2}(\mathbb{T}^2)}^2.
\end{aligned}	
\end{equation*}
\end{remark}
\section{Appendix B: A priori estimates and Cauchy properties}
In the next lemma we give the proof of \eqref{approxslnestim}.
\begin{lemma}\label{le:approxslnestim}
For any $p>0$ and $R>0$ there exists a constant $C$ independent of $N$ such that 
\begin{equation}\label{approxslnestimappendix}
    \displaystyle\sup_{N \geq 1}\mathbb{E}\left[ \displaystyle\sup_{t\in[0,T]}\|\ckX_t^N\|_{k,2}^p + \left(\displaystyle\int_0^T \|\ckX_t^N\|_{k+1,2}^2dt\right)^{p/2}\right ] \leq C(R).
\end{equation}
\end{lemma}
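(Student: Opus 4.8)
The plan is to derive an energy inequality for $\check X^N$ in $\mathcal{W}^{k,2}$ via the Itô formula and then close it with a Gronwall argument, tracking that every constant is independent of $N$. Since each $\check X^N$ is supported on finitely many Fourier modes (it lies in the range of $\mathcal{J}_N$), the approximating system \eqref{compactapproxstrong} is a finite-dimensional SDE whose coefficients are globally bounded thanks to the truncation $f_R$; hence the classical Itô formula applies rigorously to $\|\partial^\theta\check X_t^N\|_2^2$ for each multi-index $\theta$ with $|\theta|\le k$. Summing over such $\theta$ and using that $\mathcal{J}_N$ is a self-adjoint Fourier multiplier which commutes with $\partial^\theta$ and is bounded on every $\mathcal{W}^{k,2}$ \emph{uniformly} in $N$ (with $\mathcal{J}_N\check X^N=\check X^N$), I obtain
\begin{equation*}
d\|\check X_t^N\|_{k,2}^2 + 2\gamma\|\check X_t^N\|_{k+1,2}^2\,dt = -2\langle \check X_t^N, \check{\mathcal{A}}^N(\check X_t^N)\rangle_{k,2}\,dt + \|\check{\mathcal{G}}^N(\check X_t^N)\|_{k,2}^2\,dt - 2\langle \check X_t^N, \check{\mathcal{G}}^N(\check X_t^N)\rangle_{k,2}\,dB_t,
\end{equation*}
where the dissipative operator $\gamma\Delta$ (with $\gamma=(\nu,\eta)$ acting componentwise) supplies the crucial good term $2\gamma\|\check X_t^N\|_{k+1,2}^2$.

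Next I would control the deterministic drift. The nonlinear advective and compressibility contributions $f_R(\check X^N)\big(\mathcal{L}_{\mathcal{J}_N\check u^N}+(1+\delta)\mathcal{D}_{\mathcal{J}_N\check u^N}\big)\mathcal{J}_N\check X^N$ are estimated by standard Kato--Ponce product and commutator inequalities; because $f_R$ forces $\|\check X^N\|_{k,2}\le R+1$ on its support, each is bounded by $C(R)\|\check X^N\|_{k,2}^2+\epsilon\|\check X^N\|_{k+1,2}^2$. The Itô--Stokes transport term and the gravity term $g\nabla h^N$ (linear, first order) yield $\epsilon\|\check X^N\|_{k+1,2}^2+C\|\check X^N\|_{k,2}^2$, while the Coriolis pairing $\langle\partial^\theta\check u^N,\partial^\theta(fk\times\check u^N)\rangle$ is skew (it vanishes for constant $f$) and in general contributes only $\le C\|\check X^N\|_{k,2}^2$.

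The genuinely delicate step, and the one I expect to be the main obstacle, is the interaction of the transport noise with the Itô--Stratonovich corrector. The factor $-\tfrac12\mathcal{L}_\sigma^2$ inside $\check{\mathcal{A}}^N$ produces $+\langle\check X^N,\mathcal{L}_\sigma^2\check X^N\rangle_{k,2}$, which must be combined with the quadratic variation $\|\check{\mathcal{G}}^N(\check X^N)\|_{k,2}^2=\|\mathcal{L}_\sigma\check X^N\|_{k,2}^2$. At $k=0$ these cancel exactly, together with the Itô--Stokes drift term (this is precisely \eqref{cancel2}); but for $k\ge 1$ the cancellation only annihilates the top-order, $(k{+}1)$-derivative contribution. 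What remains are commutators $[\partial^\theta,\sigma\cdot\nabla]$ of order at most $k$ in $\check X^N$. The careful part is expanding $\partial^\theta(\sigma\cdot\nabla\check X^N)$, integrating by parts with $\nabla\cdot\sigma=0$, and verifying that, after adding the $\mathcal{L}_\sigma^2$ contribution, all genuinely top-order pieces cancel, leaving only lower-order commutators; using $\|\sigma\|_{k+1,\infty}<\infty$ these are bounded by $C(\sigma,k)\|\check X^N\|_{k,2}^2+\epsilon\|\check X^N\|_{k+1,2}^2$, with constants uniform in $N$.

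Finally, choosing $\epsilon$ small enough that all $\epsilon\|\check X^N\|_{k+1,2}^2$ terms are absorbed into $2\gamma\|\check X^N\|_{k+1,2}^2$ on the left, I reach
\begin{equation*}
d\|\check X_t^N\|_{k,2}^2 + \gamma\|\check X_t^N\|_{k+1,2}^2\,dt \le C(R)\|\check X_t^N\|_{k,2}^2\,dt + dM_t^N,
\end{equation*}
with $M^N$ the martingale above. For $p\ge 2$, I would raise this to the power $p/2$, take $\sup_{t\le T}$, and apply the Burkholder--Davis--Gundy inequality to $M^N$, whose bracket is controlled by $\int_0^T\|\check X_s^N\|_{k,2}^2\|\mathcal{L}_\sigma\check X_s^N\|_{k,2}^2\,ds\le C(\sigma)\int_0^T\|\check X_s^N\|_{k,2}^4\,ds$; the resulting $\epsilon\,\mathbb{E}[\sup_t\|\check X^N\|_{k,2}^p]$ term is absorbed back into the left-hand side. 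A Gronwall argument in integral form (after taking expectations) then delivers the bound with a constant $C(R)$ depending only on $R,\sigma,\gamma,k,p,T$ but not on $N$. The case $0<p<2$ follows from the case $p=2$ by Jensen's inequality.
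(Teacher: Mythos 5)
Your proposal follows essentially the same route as the paper's proof: It\^o's formula applied to $\|\partial^\theta\check X^N_t\|_2^2$ in the band-limited (finite-dimensional) setting, drift estimates in which the truncation $f_R$ converts cubic terms into $C(R)$ times quadratic ones, cancellation of the top-order contributions between the It\^o--Stratonovich corrector $\tfrac12\mathcal{L}_\sigma^2$ and the quadratic variation of the noise, absorption of the $\epsilon\|\cdot\|_{k+1,2}^2$ remainders into the viscous dissipation, and finally Burkholder--Davis--Gundy plus Gronwall, with Jensen's inequality handling small $p$ and the power-$p$ It\^o argument handling $p>2$ (as in the paper's Remark~\ref{premark}).

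One displayed inequality in your argument is false as written, although the conclusion it serves is correct. You bound the bracket of $M^N$ by Cauchy--Schwarz as $\int_0^T\|\check X^N_s\|_{k,2}^2\|\mathcal{L}_\sigma\check X^N_s\|_{k,2}^2\,ds$ and then claim this is $\le C(\sigma)\int_0^T\|\check X^N_s\|_{k,2}^4\,ds$; but $\|\mathcal{L}_\sigma\check X^N\|_{k,2}$ loses a derivative --- it is controlled by $C\|\check X^N\|_{k+1,2}$, not by $C\|\check X^N\|_{k,2}$ uniformly in $N$ --- so the second step fails. The repair is to estimate the pairing itself rather than the operator norm: writing $\partial^\theta(\sigma\cdot\nabla\check X^N)=\sigma\cdot\nabla\partial^\theta\check X^N+[\partial^\theta,\sigma\cdot\nabla]\check X^N$, the top-order term pairs to zero against $\partial^\theta\check X^N$ since $\nabla\cdot\sigma=0$, and the commutator involves at most $k$ derivatives of $\check X^N$, giving $|\langle\partial^\theta\check X^N,\partial^\theta\mathcal{L}_\sigma\check X^N\rangle|\le C(\sigma)\|\check X^N\|_{k,2}^2$. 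This is precisely the bound the paper uses before applying Burkholder--Davis--Gundy, and it is the same integration-by-parts argument you already invoke for the corrector term, so the slip is cosmetic rather than structural.
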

\begin{proof}
Note that by Jensen's inequality we have for $p \leq 2 $ \footnote{For $p > 2$ see Remark \ref{premark}} we have
\begin{equation*}
\mathbb{E}\left[ \displaystyle\sup_{t\in[0,T]}\|\ckX_t^N\|_{k,2}^p + \left( \displaystyle\int_0^T\|\ckX_t^N\|_{k+1,2}^2dt\right)^{p/2}\right] \leq \left(\mathbb{E} \left[ \displaystyle\sup_{t\in[0,T]}\|\ckX_t^N\|_{k,2}^2 + \displaystyle\int_0^T\|\ckX_t^N\|_{k+1,2}^2dt\right]\right)^{p/2}
\end{equation*}
i.e.
\begin{equation*}
    \mathbb{E}\left[\|\ckX^N\|_{\mathcal{S}_{T,p,(k,2)}}\right] \leq \left(\mathbb{E}\left[\|\ckX^N\|_{\mathcal{S}_{T,2,(k,2)}}\right]\right)^{p/2}.
\end{equation*}

Let $\theta$ be a multi-index of length $k$. By the It\^{o} formula 
\begin{equation*}
    \begin{aligned}
        d\|\partial^{\theta}\ckX_t^N\|_2^2 + 2\gamma\|\partial^{\theta+1 }\ckX_t^N\|_{2}^2dt &= -2\langle \partial^{\theta}\ckX_t^N,\partial^{\theta}\check{\mathcal{A}}^N(\ckX_t^N) \rangle dt\\  
        & -2 \langle \partial^{\theta}\ckX_t^N, \partial^{\theta}\check{\mathcal{G}}^N(\ckX_t^N) \rangle dB_t + \langle \partial^{\theta}\check{\mathcal{G}}^N(\ckX_t^N), \partial^{\theta}\check{\mathcal{G}}^N(\ckX_t^N)\rangle dt.
    \end{aligned}
\end{equation*}
After taking the absolute value, integrating in time and taking the supremum over $s\in[0,t]$ with $0\leq s\leq t\leq T$, and then taking the expectation with respect to $\mathbb{P}$, we have that
\begin{equation*}
    \begin{aligned}
        \mathbb{E}\left[\displaystyle\sup_{s\in[0,t]}\|\ckX_s^N\|_{k,2}^2 + 2\gamma\displaystyle\int_0^t \|\ckX_s^N\|_{k+1,2}^2ds\right] & \leq C \mathbb{E}\left[\|\ckX_0\|_{k,2}^2\right]  + C \mathbb{E}\left[ \displaystyle\int_0^t|\langle \partial^{\theta}\ckX_s^N,\partial^{\theta}\check{\mathcal{A}}^N(\ckX_s^N) \rangle|ds\right]\\
        & + C \mathbb{E}\left[ \left|\displaystyle\int_0^t \langle \partial^{\theta}\ckX_s^N, \partial^{\theta}\check{\mathcal{G}}^N(\ckX_s^N) \rangle  dB_s\right|\right] \\
        & + C\mathbb{E}\left[\displaystyle\int_0^t |\langle \partial^{\theta}\check{\mathcal{G}}^N(\ckX_s^N), \partial^{\theta}\check{\mathcal{G}}^N(\ckX_s^N)\rangle|ds \right]
    \end{aligned}
\end{equation*}
We control each of these terms separately using the results below and then apply the Gronwall lemma to obtain a bound for $\mathbb{E}\left[\|\ckX^N\|_{\mathcal{S}_{T,2,(k,2)}}\right]$. Given the definition of $\check{\mathcal{A}}, \check{\mathcal{G}}, \mathcal{L}, \mathcal{D}$, we need to estimate the following terms\footnote{The constants $C, C_i, i=1,2$ differ from line to line.}: 
The rotation term: 
\begin{equation*}
\begin{aligned}
    T_1 & := |\langle \partial^{\theta}\cku^N,\partial^{\theta}\mathcal{J}_N(fk\times\mathcal{J}_Nu^N) \rangle|  = |\langle \partial^{\theta+1}\mathcal{J}_N\cku^N,\partial^{\theta-1}(fk\times\mathcal{J}_N\cku^N) \rangle| \\
    & \leq C_1(R,\nu)\|\mathcal{J}_N\cku^N\|_{k+1,2}^2 + C_2(R,\nu)\|\mathcal{J}_N\cku^N\|_{k,2}^2  \\
    & \leq C_1(\nu)\frac{R}{2}\|\mathcal{J}_N\cku^N\|_{k+1,2}^2 + C_2(\nu)\frac{R}{2}\|\mathcal{J}_N\cku^N\|_{k,2}^2
\end{aligned}
\end{equation*}
The advection term, which contains also the It\^{o}-Stokes drift: 
\begin{equation*}
\begin{aligned}
    T_2 &:= |\langle \partial^{\theta}\cku^N, f_R(\cku^N)\partial^{\theta}\mathcal{J}_N({\mathcal{L}}_{\mathcal{J}_N\cku^N}\mathcal{J}_N\cku^N) + \partial^{\theta}\mathcal{J}_N({\mathcal{L}}_{\mathcal{J}_N\cku^{S,N}}\mathcal{J}_N\cku^N)  \rangle| 
\end{aligned}    
\end{equation*}
    Taking into account the fact that $\tilde{\mathcal{L}}_uu = (u-u^S)\cdot \nabla u$ we need to estimate two quite similar terms:
\begin{equation*}
    \begin{aligned}
    T_{21} &: = |\langle \partial^{\theta}\cku^N, f_R(\cku^N)\partial^{\theta}\mathcal{J}_N(\mathcal{L}_{\mathcal{J}_N\cku^N}\mathcal{J}_N\cku^N) \rangle| \\
    & = |\langle \partial^{\theta+1}\cku^N, f_R(\cku^N)\partial^{\theta - 1}\mathcal{J}_N(\mathcal{L}_{\mathcal{J}_N\cku^N}\mathcal{J}_N\cku^N) \rangle| \\
    & = Cf_R(\cku^N)\left |\displaystyle\sum_{\beta\leq\theta-1}\langle \partial^{\theta+1}\cku^N, f_R(\cku^N) \partial^{\beta}(\mathcal{J}_N\cku^N)\partial^{\theta-1-\beta}(\nabla\mathcal{J}_N\cku^N)\rangle \right | \\
    & \leq Cf_R(\cku^N)\|\mathcal{J}_N\cku^N\|_{k+1,2}\displaystyle\sum_{\beta\leq\theta-1}\|\partial^{\beta}\mathcal{J}_N\cku^N\|_4 \|\partial^{\theta-1-\beta}(\nabla\mathcal{J}_N\cku^N)\|_4 \\
    & \leq C(R)\|\mathcal{J}_N\cku^N\|_{k+1,2}\|\mathcal{J}_N\cku^N\|_{k,2} \\
    & \leq C_1(R,\nu)\|\mathcal{J}_N\cku^N\|_{k+1,2}^2 + C_2(R,\nu)\|\mathcal{J}_N\cku^N\|_{k,2}^2 \\
    & \leq C_1(\nu)\frac{R}{2}\|\mathcal{J}_N\cku^N\|_{k+1,2}^2 + C_2(\nu)\frac{R}{2}\|\mathcal{J}_N\cku^N\|_{k,2}^2.
    \end{aligned}
\end{equation*}
With exactly the same arguments but without moving one derivative on the left term we have 
\begin{equation*}
    \begin{aligned}
    T_{22} &: = |-\langle \partial^{\theta}\cku^N, \partial^{\theta}\mathcal{J}_N(\mathcal{L}_{\mathcal{J}_N\cku^{S,N}}\mathcal{J}_N\cku^N) \rangle| \\    
    & \leq C\|\mathcal{J}_N\cku^N\|_{k,2}\|\mathcal{J}_N\cku^{S,N}\|_{k,2}\|\mathcal{J}_N\cku^N\|_{k,2} \\
    & \leq C(\|a\|_{k+1,2})\|\mathcal{J}_N\cku^N\|_{k,2}^2
    \end{aligned}
\end{equation*}

So we have
\begin{equation*}
    T_2 \leq C_1(\nu)\frac{R}{2}\|\mathcal{J}_N\cku^N\|_{k+1,2}^2 + C_2(\nu)\frac{R}{2}\|\mathcal{J}_N\cku^N\|_{k,2}^2 + C(a)\|\mathcal{J}_N\cku^N\|_{k,2}^2 
\end{equation*}
For the third term we have 

\begin{equation*}
\begin{aligned}
    T_3 &:= (1+\alpha) |\langle \partial^{\theta}\cku^N, f_R(\cku^N)\partial^{\theta}\mathcal{J}_N(\mathcal{D}_{\mathcal{J}_N\cku^N}\mathcal{J}_N\cku^N)\rangle| \\
    & = (1+\alpha)|\langle \partial^{\theta+1}\cku^N, f_R(\cku^N)\partial^{\theta -1}\mathcal{J}_N(\mathcal{D}_{\mathcal{J}_N\cku^N}\mathcal{J}_N\cku^N)\rangle|\\ 
    & = (1+\alpha)Cf_R(\cku^N)\left |\displaystyle\sum_{\beta\leq\theta-1}\langle \partial^{\theta+1}\cku^N, f_R(\cku^N) \partial^{\beta}(\nabla \cdot \mathcal{J}_N\cku^N)\partial^{\theta-1-\beta}(\mathcal{J}_N\cku^N)\rangle \right | \\
    & \leq (1+\alpha)Cf_R(\cku^N)\|\mathcal{J}_N\cku^N\|_{k+1,2}\displaystyle\sum_{\beta\leq\theta-1}\|\partial^{\beta}(\nabla \cdot \mathcal{J}_N\cku^N)\|_4 \|\partial^{\theta-1-\beta}(\mathcal{J}_N\cku^N)\|_4 \\
    & \leq (1+\alpha)C(R)\|\mathcal{J}_N\cku^N\|_{k+1,2}\|\mathcal{J}_N\cku^N\|_{k,2} \\
    & \leq (1+\alpha)\left(C_1(R,\nu)\|\mathcal{J}_N\cku^N\|_{k+1,2}^2 + C_2(R,\nu)\|\mathcal{J}_N\cku^N\|_{k,2}^2\right) \\
    & \leq (1+\alpha)\left(C_1(\nu)\frac{R}{2}\|\mathcal{J}_N\cku^N\|_{k+1,2}^2 + C_2(\nu)\frac{R}{2}\|\mathcal{J}_N\cku^N\|_{k,2}^2\right).
\end{aligned}
\end{equation*}
For the fourth term we have 
\begin{equation*}
\begin{aligned}
    T_4 &:= |\langle \partial^{\theta}\cku^N, \partial^{\theta}\mathcal{J}_N(\nabla (\mathcal{J}_N \ckh^N))\rangle| \\
    & \leq C \|\mathcal{J}_N\cku^N\|_{k,2}\|\mathcal{J}_N\ckh^N\|_{k+1,2}\\
    & \leq C_1(\eta)\|\mathcal{J}_N\cku^N\|_{k,2}^2 +  C_2(\eta)\|\mathcal{J}_N\ckh^N\|_{k+1,2}^2 
\end{aligned}
\end{equation*}
We combine $T_5$ 
\begin{equation*}
\begin{aligned}
T_5 &:= |\langle \partial^{\theta}\cku^N, \partial^{\theta}\mathcal{J}_N\left(\mathcal{L}_{\sigma}^2\mathcal{J}_N\cku^N\right)\rangle|
\end{aligned}
\end{equation*}
and $T_8$
\begin{equation*}
\begin{aligned}
T_8 &:= |\langle \partial^{\theta}\ckh^N, \partial^{\theta}\mathcal{J}_N\left(\mathcal{L}_{\sigma}^2\mathcal{J}_N\ckh^N\right)\rangle|
\end{aligned}
\end{equation*}
with the quadratic variation term to obtain
\begin{equation*}
    \begin{aligned}
       \left| \langle \partial^{\theta}\ckX^N, \partial^{\theta}\mathcal{J}_N\left(\mathcal{L}_{\sigma}^2\mathcal{J}_N\ckX^N\right)\rangle + \langle \partial^{\theta}\mathcal{G}^N(\ckX_s^N), \partial^{\theta}\mathcal{G}^N(\ckX_s^N)\rangle\right| \leq C\|\ckX^N\|_{k,2}^2.
    \end{aligned}
\end{equation*}
The control for $T_6$ is similar to the one for $T_2$ and we have 
\begin{equation*}
\begin{aligned}
    T_6 &:= |\langle \partial^{\theta}\ckh^N, f_R(\ckh^N)\partial^{\theta}\mathcal{J}_N({\mathcal{L}}_{\mathcal{J}_N\cku^N}\mathcal{J}_N\ckh^N) + \partial^{\theta}\mathcal{J}_N({\mathcal{L}}_{\mathcal{J}_N\cku^{S,N}}\mathcal{J}_N\ckh^N)  \rangle| \\
    & \leq C_1(R,\eta)\|\mathcal{J}_N\ckh^N\|_{k+1,2}^2 + C_2(R,\eta)\|\mathcal{J}_N\cku^N\|_{k,2}^2 + C(a)\|\mathcal{J}_N\ckh^N\|_{k,2}^2 \\
    & \leq C_1(\eta)\frac{R}{2}\|\mathcal{J}_N\ckh^N\|_{k+1,2}^2 + C_2(\eta)\frac{R}{2}\|\mathcal{J}_N\cku^N\|_{k,2}^2 + C(a)\|\mathcal{J}_N\ckh^N\|_{k,2}^2.
\end{aligned}
\end{equation*}
The control for $T_7$ is similar to the one for $T_3$ and we have
\begin{equation*}
\begin{aligned}
    T_7 &:= (1+\beta)|\langle \partial^{\theta}\ckh^N, f_R(\ckh^N)\partial^{\theta}\mathcal{J}_N(\mathcal{D}_{\mathcal{J}_N\cku^N}\mathcal{J}_N\ckh^N)\rangle| \\
    & \leq (1+\beta)C(R)\|\mathcal{J}_N\ckh^N\|_{k+1,2}\|\mathcal{J}_N\cku^N\|_{k,2} \\
    & \leq (1+\beta)\left(C_1(R,\eta)\|\mathcal{J}_N\ckh^N\|_{k+1,2}^2 + C_2(R,\eta)\|\mathcal{J}_N\cku^N\|_{k,2}^2 \right)\\
    & \leq (1+\beta)\left(C_1(\eta)\frac{R}{2}\|\mathcal{J}_N\ckh^N\|_{k+1,2}^2 + C_2(\eta)\frac{R}{2}\|\mathcal{J}_N\cku^N\|_{k,2}^2\right).
\end{aligned}
\end{equation*}
For the stochastic integrals we use the fact that
\begin{equation*}
    \begin{aligned}
        |\langle \partial^{\theta}\ckX_r^N, \partial^{\theta}\check{\mathcal{G}}^N(\ckX_r^N)\rangle| 
        & \leq C\|\ckX_r^N\|_{k,2}^2.
    \end{aligned}
\end{equation*}
This is true since $\nabla \cdot \sigma = 0$ and there exists $C$ such that $\|\sigma\|_{k,2} \leq C$, by the initial assumptions. 
Then we apply the Burkholder-Davis-Gundy inequality to obtain
\begin{equation*}
    \begin{aligned}
        \mathbb{E}\left[ \displaystyle\sup_{s\in[0,t]}\left|\displaystyle\int_0^s \langle \partial^{\theta}\ckX_r^N, \partial^{\theta}\check{\mathcal{G}}^N(\ckX_r^N) \rangle dB_r\right|\right] & \leq C\mathbb{E}\left[ \displaystyle\int_0^s |\langle \partial^{\theta}\ckX_r^N, \partial^{\theta}\check{\mathcal{G}}^N(\ckX_r^N) \rangle|^2ds\right]^{1/2} \\
        & \leq C\mathbb{E}\left[ \displaystyle\sup_{r\in[0,s]}\|\ckX_r^N\|_{k,2}^2 \right] + C(T). 
    \end{aligned}
\end{equation*}
Note that all these bounds are independent of $N$. Summing up we have
\begin{equation*}
    \begin{aligned}
     \mathbb{E} & \left[  \displaystyle\sup_{s\in[0,t]} \|\ckX_s^N\|_{k,2}^2  + 2\gamma\displaystyle\int_0^t\|\ckX_s^N\|_{k+1,2}^2ds \right]  \leq \mathbb{E}\left[\|\ckX_0\|_{k,2}^2\right] \\
        & + C_1(R,\nu)\mathbb{E}\left[\displaystyle\int_0^t  \|\mathcal{J}_N\cku_s^N\|_{k+1,2}^2 ds\right]+ C_2(R,\nu)\mathbb{E}\left[\displaystyle\sup_{s\in[0,t]}\|\mathcal{J}_N\cku_s^N\|_{k,2}^2 \right]+ C(a)\mathbb{E}\left[\displaystyle\sup_{s\in[0,t]}\|\mathcal{J}_N\cku_s^N\|_{k,2}^2 ds \right]\\
        & + C_1(R,\nu)\mathbb{E}\left[\displaystyle\int_0^t \|\mathcal{J}_N\cku_s^N\|_{k+1,2}^2 ds\right]+ C_2(R,\nu)\mathbb{E}\left[\displaystyle\sup_{s\in[0,t]}\|\mathcal{J}_N\cku_s^N\|_{k,2}^2 \right]\\
        & + C_1(\eta)\mathbb{E}\left[ \displaystyle\sup_{s\in[0,t]} \|\mathcal{J}_N\cku_s^N\|_{k,2}^2 \right]+  C_2(\eta)\mathbb{E}\left[\displaystyle\int_0^t\|\mathcal{J}_N\ckh_s^N\|_{k+1,2}^2 ds \right]\\
        & + C_1(\|a\|_{k+1,2},\nu)\mathbb{E}\left[ \displaystyle\int_0^t \|\mathcal{J}_N\cku_s^N\|_{k+1,2}^2ds\right] + C_2(\|a\|_{k+1,2},\nu)\mathbb{E}\left[\displaystyle\sup_{s\in[0,t]}\|\mathcal{J}_N\cku_s^N\|_{k,2}^2 \right]\\
        & + C_1(R,\eta)\mathbb{E}\left[\displaystyle\int_0^t \|\mathcal{J}_N\ckh_s^N\|_{k+1,2}^2 ds\right]+ C_2(R,\eta)\mathbb{E}\left[\displaystyle\sup_{s\in[0,t]}\|\mathcal{J}_N\cku_s^N\|_{k,2}^2 \right]+ C(a)\mathbb{E}\left[\displaystyle\sup_{s\in[0,t]}\|\mathcal{J}_N\ckh_s^N\|_{k,2}^2 ds\right] \\
        & + C_1(R,\eta)\mathbb{E}\left[\displaystyle\int_0^t \|\mathcal{J}_N\ckh_s^N\|_{k+1,2}^2 ds\right]+ C_2(R,\eta)\mathbb{E}\left[\displaystyle\sup_{s\in[0,t]}\|\mathcal{J}_N\cku_s^N\|_{k,2}^2 ds \right]\\
        & + C_1(\|a\|_{k+1,2},\eta)\mathbb{E}\left[\displaystyle\int_0^t \|\mathcal{J}_N\ckh_s^N\|_{k+1,2}^2ds \right]+ C_2(\|a\|_{k+1,2},\eta)\mathbb{E}\left[\displaystyle\sup_{s\in[0,t]}\|\mathcal{J}_N\ckh_s^N\|_{k,2}^2  \right] \\
        & + C_1(\gamma,\delta)\mathbb{E}\left[ \displaystyle\sup_{r\in[0,s]}\|\ckX_r^N\|_{k,2}^2\right] + C_2(\gamma,\delta)\mathbb{E}\left[\displaystyle\int_0^s\|\ckX_r^N\|_{k+1,2}^2dr \right] \\
        & + C(\gamma)\mathbb{E}\left[ \displaystyle\int_0^t \|\ckX_s^N\|_{k+1,2}^2 ds\right] + C(T)
    \end{aligned}
\end{equation*}
with $\delta = (\alpha,\beta)$.
By choosing the constants to depend appropriately on $\nu$ and $\eta$ we have
\begin{equation*}
    \begin{aligned}
        \mathbb{E}  & \left[  \displaystyle\sup_{s\in[0,t]} \|\ckX_s^N\|_{k,2}^2  + 2\gamma\displaystyle\int_0^t\|\ckX_s^N\|_{k+1,2}^2ds \right]   \leq \|\ckX_0\|_{k,2}^2 \\
        & + \tilde{C}_1(R,\nu)\mathbb{E}\left[ \displaystyle\int_0^t \|\mathcal{J}_N\cku^N\|_{k+1,2}^2ds\right] + \tilde{C}_1(R,\eta)\mathbb{E}\left[ \displaystyle\int_0^t \|\mathcal{J}_N\ckh^N\|_{k+1,2}^2ds\right] \\
        & + \tilde{C}_2(R,\nu)\mathbb{E}\left[ \displaystyle\sup_{s\in[0,t]}\|\mathcal{J}_N\cku^N\|_{k,2}^2\right] + \tilde{C}_2(R,\eta)\mathbb{E}\left[ \displaystyle\sup_{s\in[0,t]}\|\mathcal{J}_N\ckh^N\|_{k,2}^2\right] \\
        & + \tilde{C}_1(\gamma)\mathbb{E}\left[ \displaystyle\int_0^t \|\mathcal{J}_N\ckX^N\|_{k+1,2}^2ds\right] + \tilde{C}_2(\gamma)\mathbb{E}\left[ \displaystyle\sup_{s\in[0,t]}\|\mathcal{J}_N\ckX^N\|_{k,2}^2\right] + C(T)\\
        & \leq \|\ckX_0\|_{k,2}^2+ C(T) +  \dbtilde{C}_1(R,\gamma)\mathbb{E}\left[ \displaystyle\int_0^t\|\mathcal{J}_N\ckX^N\|_{k+1,2}^2ds\right] + \dbtilde{C}_2(R,\gamma)\mathbb{E}\left[ \displaystyle\sup_{s\in[0,t]}\|\mathcal{J}_N\ckX^N\|_{k,2}^2\right].
    \end{aligned}
\end{equation*}
By Gronwall lemma and using the fact that $\mathcal{J}_N\ckX^N = \ckX^N$ (since $\ckX^N$ is already in the \textit{projected} space) we deduce that there exists a constant $C=C(R)$ such that 
\begin{equation*}
    \mathbb{E}\left[\|\ckX^N\|_{\mathcal{S}_{T,2,(k,2)}}\right] \leq C(R)
\end{equation*}
and therefore
\eqref{approxslnestimappendix} holds. 
\end{proof}

\begin{remark}\label{premark}
We provided above the control of all terms in $L^2({\mathbb{T}^2})$. 
When $p\geq 2$ one needs to apply the It\^{o} formula for the map $x\rightarrow x^p$. The same type of control is then needed in $L^2({\mathbb{T}^2})$ but an extra term $\|X^N\|_{k,2}^{p-2}$ needs to be carefully taken into account when applying the Gronwall lemma. However, no other technical differences appear, and therefore we do not redo the calculations for this case. An exact application of the It\^{o} formula  for $x^p$ when $p \geq 2$ can be found for instance in \cite{Krylov}.
\end{remark}
\begin{proposition}(\textit{Cauchy property})\label{lemma:cauchy}
The approximating system $(\check{X}^N)_N$ is Cauchy in 
$$L^2(\Omega, C([0,T], L^2(\T^2)^3) \cap L^2([0,T], \mathcal{W}^{1,2}(\T^2)^3))  $$ and 
 \begin{equation*}\label{eq:cauchyrelation}
   \displaystyle\lim_{N\rightarrow \infty}\displaystyle\sup_{M\geq N}\mathbb{E}\left[\displaystyle\sup_{t\in [0, T]}\|\check{X}_t^N - \check{X}_t^M\|_2^p\right] +\mathbb{E}\left[\left(\int_{0}^{T}\|\check{X}_s^N - \check{X}_s^M\|_{1,2}^2ds\right)^{p/2}\right]= 0
   \end{equation*}
where $\check{X}_t^N = (\check{u}_t^N, \check{h}_t^N)$ is the truncated approximating solution.
\end{proposition}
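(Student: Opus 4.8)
The plan is to fix $M \geq N$, set $Y := \ckX^N - \ckX^M$, and run an energy estimate in $L^2(\T^2)^3$ for $Y$. Crucially, $\ckX^N$ and $\ckX^M$ solve \eqref{compactapproxstrong} \emph{driven by the same Brownian motions}, so the noise enters the equation for $Y$ only through the difference of the coefficients. First I would apply the It\^o formula to $t \mapsto \|Y_t\|_2^2$; integrating the Laplacian by parts transfers the dissipation to the left, giving
\begin{equation*}
d\|Y_t\|_2^2 + 2\gamma\|Y_t\|_{1,2}^2\,dt = -2\langle Y_t, \check{\mathcal{A}}^N(\ckX_t^N) - \check{\mathcal{A}}^M(\ckX_t^M)\rangle\,dt + \|\check{\mathcal{G}}^N(\ckX_t^N) - \check{\mathcal{G}}^M(\ckX_t^M)\|_2^2\,dt + dD_t,
\end{equation*}
where $dD_t := -2\langle Y_t, \check{\mathcal{G}}^N(\ckX_t^N) - \check{\mathcal{G}}^M(\ckX_t^M)\rangle\,dB_t$ is a local martingale and the harmless zero-order term $2\gamma\|Y_t\|_2^2\,dt$ coming from $\|Y\|_{1,2}^2 = \|Y\|_2^2 + \|\nabla Y\|_2^2$ has been moved to the right. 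The proof then reduces to estimating the drift and quadratic-variation terms.

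Next I would split each coefficient difference, by telescoping through $\check{\mathcal{A}}^M(\ckX^N)$, into a \emph{projection difference} (operators with $\mathcal{J}_N$ versus $\mathcal{J}_M$, same argument), an \emph{argument difference} (same operator $\mathcal{J}_M$, arguments $\ckX^N$ versus $\ckX^M$), and a \emph{truncation difference} from $f_R(\ckX^N) - f_R(\ckX^M)$. For the argument part the diagonal transport and compressibility terms cancel exactly as in \eqref{cancel1} applied to $Y$, namely $\langle Y, \cku^M \bcdot \nabla Y\rangle + \tfrac12\langle Y, (\nabla\bcdot\cku^M)Y\rangle = 0$, while the leftover cross term $\langle Y, (\cku^N - \cku^M)\bcdot\nabla\ckX^N\rangle$ is handled by Ladyzhenskaya's and H\"older's inequalities, using the bound $\|\ckX^N\|_{k,2} \le R+1$ enforced by $f_R$ and the regularity $k\geq 2$, producing a contribution bounded by $C(R)\|Y\|_2^2 + \epsilon\|Y\|_{1,2}^2$, the last term absorbed into $2\gamma\|Y\|_{1,2}^2$. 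Likewise, the It\^o--Stokes drift, the $\mathcal{L}_\sigma^2$ term and the quadratic variation combine via the cancellation \eqref{cancel2} for $Y$ (using $\nabla\bcdot\sigma = 0$), leaving only lower-order remainders of the same controllable type.

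The heart of the argument, and the step I expect to be the main obstacle, is the projection- and truncation-difference pieces, which do not vanish pointwise but only become small as $N \to \infty$. For the projection part I would invoke the Littlewood--Paley decay estimate of Remark \ref{importantLPremark}, $\|(\mathrm{Id} - \mathcal{J}_N)f\|_2^2 \le \tfrac1N\|f\|_{1,2}^2$ and its higher-order analogues, paired with the uniform bound \eqref{approxslnestim} of Lemma \ref{le:approxslnestim} in $\cW^{k+1,2}$; since $\mathcal{J}_M - \mathcal{J}_N$ is dominated by $\mathrm{Id}-\mathcal{J}_N$ for $M\geq N$, this controls every projection remainder by $C(R)/N$ times an integrable quantity, hence by a null sequence that is \emph{uniform in} $M\geq N$. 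The truncation difference is estimated through the Lipschitz continuity of $f_R$ in the $\cW^{k,2}$-norm together with the interpolation inequality $\|Y\|_{k,2} \le C\|Y\|_2^{\theta}\|Y\|_{k+1,2}^{1-\theta}$, so that the factor $\|Y\|_{k+1,2}$ is absorbed by the a priori bound while a genuine positive power of $\|Y\|_2$ survives; this is precisely where $k\geq 2$ is used so that the truncation ``converges properly''.

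Finally I would take the supremum over $t\in[0,T]$, control the martingale $D_t$ by the Burkholder--Davis--Gundy inequality (its bracket is dominated by $\int_0^T\|Y_s\|_2^2\,ds$ thanks to the shared Brownian motions and $\nabla\bcdot\sigma = 0$), and close with Gronwall's lemma, whose constant is deterministic and finite because $f_R$ keeps $C(R)$ bounded. This yields, for $p=2$,
\begin{equation*}
\mathbb{E}\left[\sup_{t\in[0,T]}\|Y_t\|_2^2 + \int_0^T\|Y_s\|_{1,2}^2\,ds\right] \le \frac{C(R,T)}{N}
\end{equation*}
uniformly in $M \geq N$, which gives the stated limit. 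The general $p>0$ case follows by applying the It\^o formula to $x \mapsto x^{p/2}$ as in Remark \ref{premark}, the additional factor $\|Y\|_2^{p-2}$ being absorbed in the Gronwall step without new structural difficulty.
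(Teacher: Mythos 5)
Your overall architecture --- It\^{o} formula for $\|Y_t\|_2^2$ with $Y := \check{X}^N - \check{X}^M$, telescoping into projection, argument and truncation differences, Littlewood--Paley decay plus the uniform bound \eqref{approxslnestim} for the projection pieces, Ladyzhenskaya/Young absorbed into the dissipation for the argument pieces, then Burkholder--Davis--Gundy and Gronwall --- is exactly the paper's (Proposition \ref{lemma:cauchy} together with Lemmas \ref{lemma:estimgeneral} and \ref{lemma:estimdiv}). However, your treatment of the truncation difference has a genuine gap, and it sits precisely at the step you yourself flagged as the main obstacle. The Lipschitz bound $|f_R(\check{X}^N) - f_R(\check{X}^M)| \le C\|Y\|_{k,2}$ lives in the \emph{high} norm, while the Gronwall quantity is $\mathbb{E}\left[\|Y\|_2^2\right]$. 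Interpolation gives $\|Y\|_{k,2} \le C\|Y\|_2^{1/(k+1)}\|Y\|_{k+1,2}^{k/(k+1)}$, so after Cauchy--Schwarz the truncation term is of order $C(R)\,\|Y\|_2^{1+1/(k+1)}\|Y\|_{k+1,2}^{k/(k+1)}$. Whatever you do next fails to close: Young's inequality leaves $\delta\,\mathbb{E}\left[\|Y\|_{k+1,2}^2\right]$, which by \eqref{approxslnestim} is only a \emph{constant} $\delta C(R)$, not a null sequence in $N$; alternatively, H\"{o}lder in $\omega$ leaves $\left(\mathbb{E}\left[\|Y\|_2^2\right]\right)^{\beta}$ with $\beta = \frac{k+2}{2(k+1)} < 1$, and a differential inequality $\phi' \le C\phi + C\phi^{\beta} + C/N$ with a sublinear term does not force $\phi \to 0$: the ODE $\phi' = C\phi^{\beta}$ with $\phi(0)=0$ admits the nonzero solution $\phi(t) = \left(C(1-\beta)t\right)^{1/(1-\beta)}$. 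So ``a genuine positive power of $\|Y\|_2$ survives'' is not enough; you need either the exact quadratic power or an explicit prefactor that is small in $N$ uniformly in $M$.

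The paper supplies that missing smallness by two devices your outline omits. First, the expectation of the truncation term is split over the events $\{\check{X}^N, \check{X}^M \in B(0,R+1)\}$ and its complements (the $E^1$--$E^4$ analysis in Lemma \ref{lemma:estimdiv}): off the good event at least one truncation factor vanishes and the term either disappears or is dominated using the fact that the norms of $\check{X}^N$ and $\check{X}^M$ are then separated by $1$; on the good event \emph{both} $\|\check{X}^N\|_{k,2}$ and $\|\check{X}^M\|_{k,2}$ are bounded by $R+1$ pathwise. This event splitting is also what rescues your argument-difference estimates: your diagonal and cross terms involve norms of $\check{u}^M$, while the only truncation factor sitting in that bracket is $f_R(\check{X}^N)$, which gives no pathwise control of the $M$-indexed solution. (Relatedly, the exact cancellation \eqref{cancel1} you invoke holds only when the compressibility weight equals $\tfrac12$, i.e. $\alpha=-\tfrac12$; for general $\alpha$ the weight is $1+\alpha$ and a term $(\tfrac12+\alpha)\langle(\nabla \cdot \check{u}^M)\mathcal{J}_M Y, \mathcal{J}_M Y\rangle$ survives, again requiring a bound on $\check{u}^M$.) Second, on the good event the paper interpolates \emph{both} $Y$-factors so that the product is exactly $\|Y\|_2\|Y\|_{k+1,2}$, and, crucially, extracts a factor $\tfrac{1}{M\wedge N}$ from the $\check{X}^M$-factors (the Bernstein-type step in the $\Xi^2$ chain) \emph{before} applying Young, so that what remains is $C(R)\mathbb{E}\left[\|Y\|_2^2\right] + \tfrac{1}{(M\wedge N)^2}\mathbb{E}\left[\|Y\|_{k+1,2}^2\right] \le C(R)\mathbb{E}\left[\|Y\|_2^2\right] + \tfrac{C(R)}{(M\wedge N)^2}$, which Gronwall can absorb into the claimed $O\left(\tfrac{1}{M\wedge N}\right)$ bound. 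Without an analogue of these two ingredients your argument cannot produce the $N$-uniform vanishing stated in the proposition.
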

\begin{proof}[\textbf{Proof of Proposition \ref{lemma:cauchy}}.]
We first show that
\begin{equation}
   \displaystyle\lim_{N\rightarrow \infty}\displaystyle\sup_{M\geq N}\mathbb{E}\left[\displaystyle\sup_{t\in [0, T]}\|\check{X}_t^N - \check{X}_t^M\|_2^2\right] +\mathbb{E}\left[ \int_{0}^{t}\|\check{X}_s^N - \check{X}_s^M\|_{1,2}^2ds \right] = 0
   \end{equation}
and then by Jensen's inequality \eqref{eq:cauchyrelation} holds. 
\noindent To further shorten the formulae we can write
  \begin{equation}
     d\check{X}_t^N = \left(F_t^{N}(\check{X}_t^N)- K_t^{N}(\check{X}_t^N)-L_t^{N}(\check{X}_t^N)\right)dt-W_t^{N}(\check{X}_t^N)dB_t
 \end{equation}
 i.e. 
\begin{subequations}
 \begin{alignat}{2}
& d\check{u}_t^N = (F_t^{N}(\check{u}_t^N)- K_t^{N}(\check{u}_t^N)+L_t^{N}(\check{u}_t^N))dt-W_t^{N}(\check{u}_t^N)dB_t\\
& d\check{h}_t^N = (F_t^{N}(\check{h}_t^N)-K_t^{N}(\check{h}_t^N)+L_t^{N}(\check{h}_t^N))dt-W_t^{N}(\check{h}_t^N)dB_t
\end{alignat}
\end{subequations}
where
\begin{equation*}
    F_t^{N}(\check{X}_t^N):= \gamma\Delta \check{X}_t^N
\end{equation*}
i.e. 
\begin{equation*}
\begin{aligned}
& F_t^{N}(\check{u}^N):= \nu\Delta \check{u}_t^N, \ \ \   F_t^{N}(\check{h}_t^N):=\eta \Delta \check{h}_t^N
\end{aligned}
\end{equation*}
and $\gamma = (\nu, \eta).$
Also
\begin{equation*}
    K_t^{N}(\check{X}_t^N):= K_{t,1}^{N}(\check{X}_t^N)- K_{t,2}^{N}(\check{X}_t^N) 
\end{equation*}
where\footnote{When not necessary, we will omit the subscript $t$ in what follows.}:
\begin{equation*}
\begin{aligned}
    K_1^N(\check{X}^N) & = f_R(\check{X}^N)\left( \mathcal{J}_{N}\left(\mathcal{{L}}_{\mathcal{J}_{N}\check{u}^{N}}+(1+\delta)\cD_{\cJ_N \check{u}^N}\right) \mathcal{J}_{N}\check{X}^{N}\right) \\
    & = f_R(\check{X}^N)\mathcal{J}_N \nabla \cdot (\mathcal{J}_N\check{u}^N\mathcal{J}_N\check{X}^N) + \delta  f_R(\check{X}^N) \cD_{\cJ_N \check{u}^N} \mathcal{J}_{N}\check{X}^{N}
\end{aligned}
\end{equation*}
and $\delta =(\alpha, \beta).$

More precisely: 
\begin{equation*}
\begin{aligned}
 K_1^{N}(\check{u}^N) & :=f_R(\check{u}^N)\left( \mathcal{J}_{N}\left(\mathcal{{L}}_{\mathcal{J}_{N}\check{u}^{N}}+(1+\alpha)\cD_{\cJ_N \check{u}^N}\right) \mathcal{J}_{N}\check{u}^{N}\right) \\
 & = f_R(\check{X}^N)\mathcal{J}_N \nabla \cdot (\mathcal{J}_N\check{u}^N\mathcal{J}_N(\check{u}^N)^{T})  + \alpha f_R(\check{X}^N)\cD_{\cJ_N \check{u}^N} \mathcal{J}_{N}\check{u}^{N}\\
 &= f_R(\check{X}^N)\mathcal{J}_N \nabla \cdot (\mathcal{J}_N\check{u}^N \otimes  \mathcal{J}_N\check{u}^N ) + \alpha f_R(\check{X}^N)\cD_{\cJ_N \check{u}^N} \mathcal{J}_{N}\check{u}^{N}\\
 & =: K_{11}^{N}(\check{u}^N) + \alpha K_{12}^{N}(\check{u}^N)
 \end{aligned}
 \end{equation*}
 \begin{equation*}
 \begin{aligned}
 K_1^{N}(\check{h}) &:=f_R(\check{h}^N)\left(\mathcal{J}_{N}\left(\mathcal{{L}}_{\mathcal{J}_{N}\check{u}^{N}} + (1+\beta)\cD_{\cJ_N \check{u}^N}\right) \mathcal{J}_{N}\check{h}^{N}\right) \\
 & = f_R(\check{X}^N)\mathcal{J}_N \nabla \cdot (\mathcal{J}_N\check{u}^N\mathcal{J}_N\check{h}^N) + \beta f_R(\check{X}^N) \cD_{\cJ_N \check{u}^N} \mathcal{J}_{N}\check{h}^{N} \\
 & =: K_{11}^{N}(\check{h}^N) + \beta K_{12}^{N}(\check{h}^N)
\end{aligned}
\end{equation*}

\begin{equation*}
\begin{aligned}
& K_2^{N}(\check{u}^N):= \mathcal{J}_{N}(\mathcal{{L}}_{\mathcal{J}_{N}u^{S,N}} \mathcal{J}_{N}\check{u}^{N}), \ \ \  K_2^{N}(\check{h}^N):= \mathcal{J}_{N}(\mathcal{{L}}_{\mathcal{J}_{N}u^{S,N}} \mathcal{J}_{N}\check{h}^{N})\ \ \ 
\end{aligned}
\end{equation*}
Finally 
\begin{equation*}
    W^{N}(\check{X}^N):= \mathcal{J}_{N}\left(\mathcal{L}_{\sigma}\mathcal{J}_{N}\check{X}^{N}\right)
\end{equation*}
\begin{equation*}
\begin{aligned}
& W^{N}(\check{u}^N):= \mathcal{J}_{N}\left(\mathcal{L}_{\sigma}\mathcal{J}_{N}\check{u}^{N}\right), \ \ \   W^{N}(\check{h}^N):=\mathcal{J}_{N}\left(\mathcal{L}_{\sigma}\mathcal{J}_{N}\check{h}^{N}\right)
\end{aligned}
\end{equation*}

\begin{equation*}
 L^{N}(\check{X}^N):= - \frac{1}{2}\cJ_N(\nabla \cdot (a\nabla(\cJ_N \check{X}^N)))   + \left( \begin{array}{c}
\mathcal{J}_Ng\nabla (\mathcal{J}_Nh^N ) + fk\times \mathcal{J}_Nu^N \\ 
0
\end{array}%
\right)
\end{equation*}
that is 
\begin{equation*}
\begin{aligned}
& L^{N}(\check{u}^N):= - \frac{1}{2}\cJ_N(\nabla \cdot (a\nabla(\cJ_N \check{u}^N))) + \mathcal{J}_Ng\nabla (\mathcal{J}_Nh^N ) + fk\times \mathcal{J}_Nu^N\\  
& L^{N}(\check{h}^N):= - \frac{1}{2}\cJ_N(\nabla \cdot (a\nabla(\cJ_N \check{h}^N)))
\end{aligned}
\end{equation*}

We need to control the difference between $\check{X}_t^N - \check{X}_t^M$. More precisely, we have:
\begin{equation}\label{basicinequality}
    \begin{aligned}
    d\|\check{X}_t^N - \check{X}_t^M\|_2^2 + 2\gamma\|\check{X}_t^N - \check{X}_t^M\|_{1,2}^2dt  
    &\leq 2|\langle \check{X}_t^N - \check{X}_t^M,K_t^{N}(\check{X}_t^N)- K_t^{M}(\check{X}_t^M) \rangle| dt\\
    & +2|\langle \check{X}_t^N - \check{X}_t^M,L_t^{N}(\check{X}_t^N)- L_t^{M}(\check{X}_t^M) \rangle |dt\\
    & +2|\langle \check{X}_t^N - \check{X}_t^M, W_t^{N}(\check{X}_t^N)- W_t^{M}(\check{X}_t^M) \rangle |dB_t \\
    & + |\langle W_t^{N}(\check{X}_t^N)- W_t^{M}(\check{X}_t^M), W_t^{N}(\check{X}_t^N)- W_t^{M}(\check{X}_t^M)\rangle |dt \\
   &  =: \Gamma^1 + \Gamma^2 + \Gamma^3 + \Gamma^4 
    \end{aligned}
\end{equation}
where $\Gamma^i, i\in \{1,2,3,4\}$, correspond, respectively, to the absolute values of the terms above. Note that $\mathbb{E}[\Gamma^3] = 0$. Based on the lemmas below we have
\begin{equation*}
    \begin{aligned}
        \mathbb{E}\left[ \Gamma^1 \right] & \leq \tilde{C}_1(\epsilon)\mathbb{E}\left[\|\check{X}_t^N - \check{X}_t^M\|_2^2\right] + \tilde{C}_2(\epsilon)\mathbb{E}\left[\|\check{X}_t^N - \check{X}_t^M\|_{k+1,2}^2\right]\\
        & +  \delta \left (C(\epsilon)\mathbb{E}\left[\|\check{X}_t^N - \check{X}_t^M\|_2^2\right] +\epsilon \mathbb{E}\left[\bar{T}_{\ckX^N}^{\cD}\right] \right) \\
        & + C(\epsilon) \mathbb{E}\left[\|\check{X}_t^N - \check{X}_t^M\|_2^2\right] + \frac{C(a, \epsilon)}{M\wedge N}\mathbb{E}\left[\|\ckX^M\|_{k,2}^2 \right] \\
        & \leq  \tilde{C}_1(\epsilon)\mathbb{E}\left[\|\check{X}_t^N - \check{X}_t^M\|_2^2\right] + \tilde{C}_2(\epsilon)\left(\mathbb{E}\left[\|\check{X}_t^N\|_{k+1,2}^2\right] - \mathbb{E}\left[\|\check{X}_t^M\|_{k+1,2}^2\right]\right)\\
        & + C_1(\delta,\epsilon)\mathbb{E}\left[\|\check{X}_t^N - \check{X}_t^M\|_2^2\right] + C_2(\delta, \epsilon)\mathbb{E}\left[\bar{T}_{\ckX^N}^{\cD,1} + \bar{T}_{\ckX^N}^{\cD,2} \right] \\
        & + C(\epsilon) \mathbb{E}\left[\|\check{X}_t^N - \check{X}_t^M\|_2^2\right] + \frac{C(a, \epsilon)}{M\wedge N}\mathbb{E}\left[\|\ckX^M\|_{k,2}^2 \right] \\
        & \leq C(\delta,\epsilon)\mathbb{E}\left[\|\check{X}_t^N - \check{X}_t^M\|_2^2\right] + \frac{C(a,\epsilon)}{M\wedge N}\mathbb{E}\left[\|\ckX^M\|_{k,2}^2\right] \\
        & \leq C(\delta,\epsilon)\mathbb{E}\left[\|\check{X}_t^N - \check{X}_t^M\|_2^2\right] + \frac{C^{\Gamma^1}}{M\wedge N}
    \end{aligned}
\end{equation*}
We used above that, in absolute value,
\begin{equation*}
    K \leq K_{11} + \delta K_{12} + K_2
\end{equation*}
more precisely
\begin{equation*}
    \begin{aligned}
    |K^N(\ckX^N) - K^M(\ckX^M)| \leq |K_{11}^N(\ckX^N) - K_{11}^M(\ckX^M)| + \delta |K_{12}^N(\ckX^N) - K_{12}^M(\ckX^M)| + |K_2^M(\ckX^M) - K_2^N(\ckX^N)|
    \end{aligned}
\end{equation*}
and that $\bar{T}_{\ckX^N}^{\cD} = \bar{T}_{\ckX^N}^{\cD,1} + \bar{T}_{\ckX^N}^{\cD,2}$ is estimated as follows
\begin{equation*}
   \begin{aligned}
    \mathbb{E}\left[\bar{T}_{\ckX^N}^{\cD,1} \right] & \leq C_1(\epsilon)\mathbb{E}\left[ \|\check{X}_t^N - \check{X}_t^M\|_2^2\right] + C_2(\epsilon)\mathbb{E}\left[\|\ckX_t^N - \ckX_t^M\|_{k+1,2}^2\right] \\
    &\leq  C_1(\epsilon)\mathbb{E}\left[ \|\check{X}_t^N - \check{X}_t^M\|_2^2\right] + C_2(\epsilon)\left( \mathbb{E}\left[\|\ckX^N\|_{k+1,2}^2\right] + \mathbb{E}\left[\|\ckX^M\|_{k+1,2}^2\right] \right) \\
    & \leq C_1(\epsilon)\mathbb{E}\left[ \|\check{X}_t^N - \check{X}_t^M\|_2^2\right] + \tilde{C}_2(\epsilon)
    \end{aligned}
\end{equation*}
and 
\begin{equation*}
    \mathbb{E}\left[\bar{T}_{\ckX^N}^{\cD,2} \right] \leq \frac{C}{M\wedge N}\mathbb{E}[\|\ckX^M\|_{k,2}^2] \leq \frac{\tilde{C}}{M\wedge N}
\end{equation*}
as per Lemma \ref{lemma:estimgeneral} below. 
Next we have
\begin{equation*}
    \begin{aligned}
        \mathbb{E}\left[ \Gamma^2 \right] &\leq \frac{\tilde{C}_1(a)}{M\wedge N}\mathbb{E}\left[ \|\ckX^N - \ckX^M\|_2^2 \right] + \frac{\tilde{C}_2(a)}{N\wedge M}\mathbb{E}\left[\|\ckX^N\|_{k,2}^2\right]\\
        & \leq \frac{\tilde{C}(a)}{M\wedge N} \left( \mathbb{E}\left[ \|\ckX^N\|_{k,2}^2\right] + \mathbb{E}\left[ \|\ckX^N\|_{k,2}^2\right] \right)\\ 
        & \leq \frac{C^{\Gamma^2}}{M\wedge N}
    \end{aligned}
\end{equation*}
and 
\begin{equation*}
    \begin{aligned}
        \mathbb{E}\left[ \Gamma^4 \right] & \leq \frac{C(a)}{M\wedge N}\left( \mathbb{E}\left[ \|\ckX^N\|_{k,2}^2\right]  + \mathbb{E}\left[ \|\ckX^M\|_{k,2}^2\right]\right) \\
        & \leq \frac{C^{\Gamma^4}}{M\wedge N}
    \end{aligned}
\end{equation*}

Then 
\begin{equation*}
    \begin{aligned}
     \mathbb{E}\left[\|\check{X}_t^N - \check{X}_t^M\|_2^2\right]  + 2\gamma\mathbb{E}\left[\displaystyle\int_0^t\|\check{X}_s^N - \check{X}_s^M\|_{1,2}^2\right]ds & \leq \|\ckX_0^N - \ckX_0^M\|_2^2+  C_1(\delta,\epsilon)\displaystyle\int_0^t\mathbb{E}\left[\|\check{X}_s^N - \check{X}_s^M\|_2^2\right]ds  + \frac{C^{\Gamma}T}{M\wedge N}
    \end{aligned}
\end{equation*}
where $\Gamma = \sum_{i=1}^4 \Gamma^i$. By Gronwall lemma
\begin{equation}\label{simplecontrol}
    \begin{aligned}
     \mathbb{E}\left[\|\check{X}_t^N - \check{X}_t^M\|_2^2\right] & \leq \left(\|\ckX_0^N - \ckX_0^M\|_2^2  + \frac{C^{\Gamma}T}{M\wedge N}\right) e^{C_1(\delta,\epsilon)T}.
    \end{aligned}
\end{equation}
 By taking the supremum for $t\in [0,T]$ in \ref{basicinequality}, using the Burkholder-Davis-Gundy inequality to control the stochastic integrals, we deduce from (\ref{simplecontrol}) that
\begin{equation*}
    \begin{aligned}
     \displaystyle\lim_{N\rightarrow\infty}\displaystyle\sup_{M\geq N}\mathbb{E}\left[\displaystyle\sup_{t\in[0,T]}\|\check{X}_t^N - \check{X}_t^M\|_2^2\right] & \leq \displaystyle\lim_{N\rightarrow\infty}\displaystyle\sup_{M\geq N}\left(\|\ckX_0^N - \ckX_0^M\|_2^2  + \frac{C^{\Gamma}T}{M\wedge N}\right) e^{C_1(\delta,\epsilon)T} \\
     &=0
    \end{aligned}
\end{equation*}
where we assumed that the initial sequence is convergent. 
   The analysis of each $\Gamma^i$ is quite tedious and we will take care of each of the terms separately. 
   We first look at the nonlinear terms and then at the more simple linear ones. This is all done in the two Lemmata below. \\
\end{proof}

\begin{lemma}\label{lemma:estimgeneral}
With $F^{N,M}, K^{N,M}, L^{N,M}, W^{N,M}$ and $\Gamma^i, i \in\{1,2,3,4\}$, defined as above, the following hold: \\
A. There exist some constants $C(\epsilon, R, \delta),C(\epsilon,\gamma,\delta),C(k,R,a,\epsilon,\delta)$ such that 
\begin{equation*}
    \begin{aligned}
    \mathbb{E}\left[|\langle \ckX_t^N-\ckX_t^M, K_t^N(\ckX_t^N)- K_t^M(\ckX_t^M)\rangle|\right] &\leq C(\epsilon,R,\delta) \mathbb{E}\left[\|\ckX^N-\ckX^M\|_2^2\right] \\
    & + C(\epsilon,\gamma,\delta)\mathbb{E}\left[\|\ckX^N-\ckX^M\|_{1,2}^2 \right] + \frac{C(k,R,a,\epsilon,\delta)}{M \wedge N}.
    \end{aligned}
\end{equation*}
B. There exists a constant $C(a, R)$ such that 
\begin{equation*}
    \begin{aligned}
    \mathbb{E}\left[|\langle \ckX_t^N-\ckX_t^M, L_t^N(\ckX_t^N)- L_t^M(\ckX_t^M)\rangle|\right] \leq \frac{C(a,R)}{M\wedge N}. 
    \end{aligned}
\end{equation*}
C. There exists a constant $C(a, R)$ such that 
\begin{equation*}
    \begin{aligned}
    \mathbb{E}\left[|\langle \ckX_t^N-\ckX_t^M, W_t^N(\ckX_t^N)- W_t^M(\ckX_t^M)\rangle|\right] \leq \frac{C(a,R)}{M\wedge N}. 
    \end{aligned}
\end{equation*}
\end{lemma}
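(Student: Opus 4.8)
The plan is to prove the three bounds A, B, C by one common mechanism: for $M\ge N$, write each operator difference $\mathcal{O}^N(\ckX^N)-\mathcal{O}^M(\ckX^M)$ (with $\mathcal{O}\in\{K,L,W\}$) as a telescoping sum of three kinds of terms — a \emph{genuine solution difference} carrying $\ckX^N-\ckX^M$, a \emph{projection tail} carrying $(\mathcal{J}_N-\mathcal{J}_M)$, and, for $K$, a \emph{truncation difference} carrying $f_R(\ckX^N)-f_R(\ckX^M)$ — and estimate each piece after pairing against $\ckX^N-\ckX^M$ in $L^2$. Two structural facts drive everything. First, the Littlewood--Paley approximation estimate of Remark \ref{importantLPremark}, $\|(\mathrm{Id}-\mathcal{J}_N)g\|_{k,2}^2\le N^{-1}\|g\|_{k+1,2}^2$, converts every projection tail into the factor $1/(M\wedge N)$ once the surviving high norm is absorbed by the uniform a priori bound \eqref{approxslnestim}. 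Second, the scalar cut-off $f_R$ is Lipschitz in $\|\cdot\|_{k,2}$ and its support forces $\|\mathcal{J}_N\ckX^N\|_{k,2}\le R+1$, so all high-order factors produced by the nonlinearity are bounded constants depending only on $R$. All products are measured through the $2$D Ladyzhenskaya inequality $\|g\|_4^2\le C\|g\|_2\|\nabla g\|_2$, so that the single derivative in the transport and divergence terms always lands on a factor controlled either by the absorbable $\|\cdot\|_{1,2}^2$-term or by \eqref{approxslnestim}.

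For part A I would first split $K=K_{11}+\delta K_{12}-K_2$ into the convective part $K_{11}=f_R(\ckX)\mathcal{J}_N\nabla\cdot(\mathcal{J}_N\cku\otimes\mathcal{J}_N\ckX)$, the compressibility part $K_{12}=f_R(\ckX)\mathcal{D}_{\mathcal{J}_N\cku}\mathcal{J}_N\ckX$, and the It\^o--Stokes part $K_2=\mathcal{J}_N(\mathcal{L}_{\mathcal{J}_N u^{S}}\mathcal{J}_N\ckX)$. For $K_{11}$ and $K_{12}$ I would insert, one at a time, the intermediate quantities obtained by replacing $\mathcal{J}_M$ by $\mathcal{J}_N$, $\cku^M$ by $\cku^N$, and $f_R(\ckX^M)$ by $f_R(\ckX^N)$. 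Pairing each against $\ckX^N-\ckX^M$ in $L^2$, integrating by parts to move the single derivative off the difference, and applying Ladyzhenskaya yields the $\|\cdot\|_2^2$- and $\|\cdot\|_{1,2}^2$-terms of the claim; the replacement $\mathcal{J}_N\leftrightarrow\mathcal{J}_M$ contributes, through Remark \ref{importantLPremark} and the a priori $\mathcal{W}^{k+1,2}$-control, the tail $C(k,R,a,\epsilon,\delta)/(M\wedge N)$. The compressibility term is handled identically, producing the auxiliary quantities $\bar T^{\mathcal{D}}_{\ckX^N}$ that again split into an absorbable part and an $O(1/(M\wedge N))$ part, and $K_2$ is estimated the same way using $u^S=\tfrac12\nabla\cdot a$ and $\|a\|_{k+1,\infty}<\infty$.

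For part B, the Coriolis term drops out completely, since $\langle v,k\times v\rangle=0$ pointwise gives $\langle \cku^N-\cku^M,\,fk\times(\cku^N-\cku^M)\rangle=0$ (recall $\cku^N=\mathcal{J}_N u^N$). The symmetric diffusion operator $-\tfrac12\mathcal{J}_N(\nabla\cdot(a\nabla\mathcal{J}_N\,\cdot))$ has its difference written as a projection tail on $\ckX^N$ plus a genuine difference passing through $\mathcal{J}_M$; with $\|a\|_{k+1,\infty}<\infty$, Remark \ref{importantLPremark} and \eqref{approxslnestim} bound this by $C(a,R)/(M\wedge N)$, while the pressure coupling $g\langle\cku^N-\cku^M,\nabla(\ckh^N-\ckh^M)\rangle$ is absorbed into the left-hand diffusion $2\gamma\|\ckX^N-\ckX^M\|_{1,2}^2$ and the Gronwall term. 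Part C is the shortest: the constraint $\nabla\cdot\sigma=0$ lets me write $\mathcal{L}_\sigma g=\sigma\cdot\nabla g=\nabla\cdot(\sigma g)$, so after splitting $W^N(\ckX^N)-W^M(\ckX^M)$ into a projection tail and a solution difference and integrating by parts, the bound $\|\sigma\|_{k+1,\infty}<\infty$ and Remark \ref{importantLPremark} yield the stated $C(a,R)/(M\wedge N)$ with no surviving norm of the difference.

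The main obstacle is the truncation-difference contribution to part A. The Lipschitz estimate $|f_R(\ckX^N)-f_R(\ckX^M)|\le C\|\ckX^N-\ckX^M\|_{k,2}$ naturally lives in the \emph{high} norm $\|\cdot\|_{k,2}$, whereas the Cauchy relation \eqref{cauchyrel} only furnishes control in $\|\cdot\|_2$ and $\|\cdot\|_{1,2}$. Bridging this gap is precisely where the hypothesis $k\ge 2$ is essential: one interpolates $\|\ckX^N-\ckX^M\|_{k,2}$ between $\|\cdot\|_2$ and $\|\cdot\|_{k+1,2}$, the latter being uniformly bounded in expectation by \eqref{approxslnestim}, and uses that $\mathcal{W}^{k,2}$ is then a Banach algebra embedding in $C^0$, so the bounded nonlinear factor can be kept in $L^\infty$ and the whole term collapsed onto the genuine difference after a time integration and H\"older's inequality, producing a Gronwall-admissible multiple of $\mathbb{E}[\|\ckX^N-\ckX^M\|_2^2]$ together with a vanishing tail. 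I expect the delicate bookkeeping of this interpolation — and of which replacement generates an absorbable $\|\cdot\|_{1,2}^2$-term versus an $O(1/(M\wedge N))$ tail — to require the most care, the remaining estimates being routine once the telescoping decomposition is fixed.
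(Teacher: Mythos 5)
Your overall skeleton coincides with the paper's: each operator difference is telescoped into a genuine solution difference, a projection tail controlled by Remark \ref{importantLPremark} together with the a priori bound \eqref{approxslnestim}, and, for $K$, a truncation difference controlled by the Lipschitz property of $f_R$; products are measured through Ladyzhenskaya-type embeddings, part B exploits the good sign of the $a$-diffusion term, and part C the antisymmetry of $\mathcal{L}_\sigma$ coming from $\nabla\cdot\sigma=0$. The genuine gap sits exactly at the term you yourself flag as the main obstacle: the truncation difference in part A, where your interpolation argument does not deliver the bound claimed in the lemma.

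Here is why it fails. After $|f_R(\ckX^N)-f_R(\ckX^M)|\le \|\bar X\|_{k,2}$ (with $\bar X:=\ckX^N-\ckX^M$) and interpolation of both $\|\bar X\|_{k,2}$ and $\|\nabla(\ckh^N-\ckh^M)\|_2$ between $L^2$ and $\mathcal{W}^{k+1,2}$, the term is bounded by $C(R)\,\mathbb{E}\left[\|\bar X\|_2\|\bar X\|_{k+1,2}\right]$. Young's inequality turns this into $C(\epsilon)\,\mathbb{E}\left[\|\bar X\|_2^2\right]+\epsilon\,\mathbb{E}\left[\|\bar X\|_{k+1,2}^2\right]$; the second term cannot be absorbed by the left-hand side of \eqref{basicinequality}, which only carries $2\gamma\|\bar X\|_{1,2}^2$, and via \eqref{approxslnestim} it is only $\le\epsilon C$ --- a constant independent of $M\wedge N$, not the required tail $C/(M\wedge N)$. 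Nor can you send $\epsilon\to 0$ after Gronwall: the Gronwall factor is $e^{C(\epsilon)T}$ with $C(\epsilon)\sim 1/\epsilon$, so $\epsilon\, e^{C(\epsilon)T}$ stays bounded away from zero and the Cauchy property \eqref{cauchyrel} does not follow. Using H\"older instead of Young gives $C\left(\mathbb{E}\left[\|\bar X\|_2^2\right]\right)^{1/2}$, and a square-root Gronwall bound likewise fails to vanish with the initial data. The step your sketch is missing, and which the paper performs in the estimate of $\Xi^2$ in Lemma \ref{lemma:estimdiv}, is to gain an explicit factor $1/(M\wedge N)$ \emph{before} interpolating: one derivative of the bounded coefficient $\cJ_M\cku^M\cJ_M\ckh^M$ is traded for $1/(M\wedge N)$, using that $\ckX^M$ lies in the projected space together with the truncation indicator bounding $\|\ckX^M\|_{k,2}^2$ by $C(R)$; only then are interpolation and Young applied, so the non-absorbable term enters as $\frac{C}{(M\wedge N)^2}\mathbb{E}\left[\|\bar X\|_{k+1,2}^2\right]\le \frac{C}{M\wedge N}$, a genuinely vanishing tail. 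Two smaller omissions: the bound $\|\cJ_N\ckX^N\|_{k,2}\le R+1$ you invoke holds only on the event where the truncation is active, so the paper's splitting over the events $E^1,\dots,E^4$ (both solutions inside $B(0,R+1)$, both outside, mixed cases where either the difference vanishes or $\|\bar X\|_2\ge 1$ trivializes the bound) is needed and absent from your sketch; and in part B your absorption of the pressure coupling $g\langle \bar u,\nabla \bar h\rangle$ into the $\|\cdot\|_2^2$ and $\|\cdot\|_{1,2}^2$ terms is sensible but proves a weaker statement than the literal $C(a,R)/(M\wedge N)$ claim of part B --- a discrepancy that lies in the lemma's statement (the paper's own proof of B treats only the $a$-diffusion part of $L$) rather than in your argument.
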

\begin{proof}[\textbf{Proof of Lemma \ref{lemma:estimgeneral}:}] 
 \noindent\textbf{A. We use the following notation:  $\bar{K}^{N,M}(\ckX^N,\ckX^M):=K^N(\ckX^N) - K^M(\ckX^M)$.}\\
 Note that 
 \begin{equation*}
    \begin{aligned}
    |K^N(\ckX^N) - K^M(\ckX^M)|& \leq |K_1^N(\ckX_t^N) - K_1^M(\ckX_t^M)| + |K_2^M(\ckX_t^M) - K_2^N(\ckX_t^N)| \\
    & \leq |K_{11}^N(\ckX^N) - K_{11}^M(\ckX^M)| + \delta |K_{12}^N(\ckX^N) - K_{12}^M(\ckX^M)| + |K_2^M(\ckX^M) - K_2^N(\ckX^N)|
    \end{aligned}
\end{equation*}
then 
 \begin{itemize}
     \item [a.] \textbf{In the equation for $\ckh^N-\ckh^M$ we have $\bar{K}_1^{N,M}(\ckh^N,\ckh^M) := \bar{K}_{11}^{N,M}(\check{h}^N,\ckh^M) + \beta \bar{K}_{12}^{N,M}(\check{h}^N,\ckh^M)$:}
     \begin{itemize}
     \item [a1.] \textbf{For the first term, which contains $\bar{K}_{11}^{N,M}(\ckh^N,\ckh^M)$, we provide the detailed analysis in Lemma \ref{lemma:estimdiv} below. Summarising, we have:}
\begin{equation*}
\begin{aligned}
&\mathbb{E}\left[|\langle \ckh^N-\ckh^M, \bar{K}_{11}^{N,M}(\ckh^N,\ckh^M)\rangle|\right]\\
& =\mathbb{E}\left[|\langle \ckh^N-\ckh^M, \cJ_Nf_R(\check{X}^N) \nabla \cdot (\cJ_N\check{u}^N\cJ_N\check{h}^N)) - \cJ_Mf_R(\check{X}^M)\nabla\cdot(\cJ_M\check{u}^M\cJ_M\check{h}^M) \rangle|\right] \\
    & \leq C(\epsilon,R) \mathbb{E}\left[\|\ckX^N-\ckX^M\|_2^2\right] + C(\epsilon,\gamma)\mathbb{E}\left[\|\ckX^N-\ckX^M\|_{1,2}^2 \right] + \frac{C(k,R)}{M \wedge N}.
\end{aligned}
\end{equation*}
         \item [a2.] \textbf{The second term, which contains $\beta \bar{K}_{12}^{N,M}(\ckh^N,\ckh^M)$, is handled as follows:}
         \begin{equation*}
             \begin{aligned}
          & |\langle \ckh^N-\ckh^M, \bar{K}_{12}^{N,M}(\ckh^N,\ckh^M) \rangle|\\
          & = | \langle \ckh^N - \ckh^M,  f_R(\ckX^N)\cJ_N(\cJ_N\ckh^N \nabla \cdot \cJ_N\cku^N) - f_R(\ckX^M)\cJ_M(\cJ_M\ckh^M \nabla \cdot \cJ_M\cku^M) \rangle | \\
           & \leq C(\epsilon)\|\ckh^N-\ckh^M\|_2^2 + \epsilon \| f_R(\ckX^N)\cJ_N(\cJ_N\ckh^N \nabla \cdot \cJ_N\cku^N) - f_R(\ckX^M)\cJ_M(\cJ_M\ckh^M \nabla \cdot \cJ_M\cku^M)\|_2^2 \\
           & =: C(\epsilon) \|\ckh^N-\ckh^M\|_2^2  + \epsilon \bar{T}_{\ckh^N}^{\cD} \\
           & \leq C(\epsilon) \|\ckX^N-\ckX^M\|_2^2  + \epsilon \bar{T}_{\ckh^N}^{\cD}
             \end{aligned}
         \end{equation*}
         and we want to keep the second term small. We can write
         \begin{equation*}
             \begin{aligned}
             \bar{T}_{\ckh^N}^{\cD} & \leq \| \cJ_N \left(f_R(\ckX^N)\cJ_N\ckh^N \nabla \cdot \cJ_N\cku^N  - f_R(\ckX^M)\cJ_M\ckh^M \nabla \cdot \cJ_M\cku^M \right)\|_2^2\\
             & + \|(\cJ_N-\cJ_M)f_R(\ckX^M)\cJ_M\ckh^M \nabla \cdot \cJ_M\cku^M \|_2^2  \\
             & \leq \bar{T}_{\ckh^N}^{\cD, 1} +  \bar{T}_{\ckh^N}^{\cD, 2}.
             \end{aligned}
         \end{equation*}
         Then we have
         \begin{equation*}
             \begin{aligned}
             \bar{T}_{\ckh^N}^{\cD, 2} &\leq \frac{1}{M\wedge N}\|f_R(\ckX^M)\cJ_M\ckh^M \nabla \cdot \cJ_M\cku^M \|_{1,2}^2 \\
             & \leq \frac{2}{M\wedge N}f_R(\ckX^M)\|\ckh^M\|_{k,2}^2\|\cku^M\|_{k,2}^2 \\
             & \leq \frac{C(R)}{M \wedge N}.
             \end{aligned}
         \end{equation*}
         For $\bar{T}_{\ckh^N}^{\cD, 1}$ we can do the same type of calculations as those from Lemma \ref{lemma:estimdiv} below to obtain\footnote{Note that here and everywhere else, the values of the constants $C$ differ from line to line and we keep track of their dependence on the parameters $R,\epsilon,\gamma,k,\beta$.}
         \begin{equation*}
             \begin{aligned}
                \mathbb{E}\left[ \bar{T}_{\ckh^N}^{\cD, 1}\right] & \leq C(\epsilon,R) \mathbb{E}\left[\|\ckX^N-\ckX^M\|_2^2\right] + C(\epsilon,\gamma)\mathbb{E}\left[\|\ckX^N-\ckX^M\|_{1,2}^2 \right] + \frac{C(k,R)}{M \wedge N}.
             \end{aligned}
         \end{equation*}
         Then
         \begin{equation*}
             \begin{aligned}
                \mathbb{E}\left[ \bar{T}_{\ckh^N}^{\cD}\right] & \leq \mathbb{E}\left[\bar{T}_{\ckh^N}^{\cD, 1}\right] + \mathbb{E}\left[\bar{T}_{\ckh^N}^{\cD, 2}\right]  \\
                & \leq C(\epsilon,R) \mathbb{E}\left[\|\ckX^N-\ckX^M\|_2^2\right] + C(\epsilon,\gamma)\mathbb{E}\left[\|\ckX^N-\ckX^M\|_{1,2}^2 \right] + \frac{C(k,R)}{M \wedge N}.
             \end{aligned}
         \end{equation*}
         and therefore
         \begin{equation*}
             \begin{aligned}
                \mathbb{E}\left[|\langle \ckh^N-\ckh^M, \bar{K}_{12}^{N,M}(\ckh^N,\ckh^M)\rangle|\right] \leq C(\epsilon,R) \mathbb{E}\left[\|\ckX^N-\ckX^M\|_2^2\right] + C(\epsilon,\gamma)\mathbb{E}\left[\|\ckX^N-\ckX^M\|_{1,2}^2 \right] + \frac{C(k,R)}{M \wedge N}.
             \end{aligned}
         \end{equation*}
   \end{itemize}
   In conclusion,
   \begin{equation*}
       \begin{aligned}
          & \mathbb{E}\left[|\langle \ckh^N-\ckh^M, \bar{K}_1^{N,M}(\ckX^N,\ckX^M)\rangle|\right] \\
          & \leq \mathbb{E}\left[|\langle \ckh^N-\ckh^M,\bar{K}_{11}^{N,M}(\check{h}^N,\ckh^M)\rangle|\right] + \beta\mathbb{E}\left[ |\langle \ckh^N-\ckh^M,\bar{K}_{12}^{N,M}(\check{h}^N,\ckh^M)\rangle|\right]\\
          &\leq C(\epsilon,R,\beta) \mathbb{E}\left[\|\ckX^N-\ckX^M\|_2^2\right] + C(\epsilon,\gamma,\beta)\mathbb{E}\left[\|\ckX^N-\ckX^M\|_{1,2}^2 \right] + \frac{C(k,R,\beta)}{M \wedge N}
       \end{aligned}
   \end{equation*}
   again with different values for the constants $C$. 
   \item [b.] \textbf{In the equation for $\ckh^N-\ckh^M$ we also need a control on $\bar{K}_2^{N,M}(\ckh^N, \ckh^M)$:} Using that 
   \begin{equation*}
      \begin{aligned}
       |\langle \check{h}^N-\check{h}^M,K_2^{N}(\check{h}^N) - K_2^{M}(\check{h}^M)  \rangle |&= |\langle \check{h}^N-\check{h}^M, \mathcal{J}_{N}(\mathcal{{L}}_{\mathcal{J}_{N}\check{u}^{S,N}} \mathcal{J}_{N}\check{h}^{N}) - \mathcal{J}_{M}(\mathcal{{L}}_{\mathcal{J}_{M}\check{u}^{S,M}} \mathcal{J}_{M}\check{h}^{M}) \rangle |\\
       & \leq C(\epsilon)\|\ckh^N-\ckh^M\|_2^2 + \frac{C(a,\epsilon)}{M \wedge N} \|\ckh^M\|_{1,2}^2\\
       & \leq C(\epsilon)\|\ckX^N-\ckX^M\|_2^2 + \frac{C(a,\epsilon)}{M \wedge N} \|\ckX^M\|_{1,2}^2
       \end{aligned}
   \end{equation*}
   the control for the corresponding expectation is given by
   \begin{equation*}
       \begin{aligned}
        \mathbb{E}\left[|\langle \ckh^N-\ckh^M,\bar{K}_2^{N,M}(\ckh^N, \ckh^M)\rangle|\right] & \leq C(\epsilon) \mathbb{E}[\|\ckX^N-\ckX^M\|_2^2] + \frac{C(a,\epsilon)}{M \wedge N}\mathbb{E}\left[\|\ckX^M\|_{k,2}^2\right]\\
        & \leq C(\epsilon) \mathbb{E}[\|\ckX^N-\ckX^M\|_2^2] + \frac{C(a,\epsilon,k,R)}{M \wedge N}.
       \end{aligned}
   \end{equation*}
\end{itemize}
We presented the detailed analysis for $\ckh^N-\ckh^M$ but exactly the same type of calculations hold for $\cku^N-\cku^M$. 
Putting all these together we obtain
 \begin{equation*}
     \begin{aligned}
    &\mathbb{E}\left[|\langle \ckX^N-\ckX^M,K^N(\ckX^N) - K^M(\ckX^M)\rangle |\right]\\
    & \leq \mathbb{E}\left[|\langle \ckX^N-\ckX^M, K_1^N(\ckX_t^N) - K_1^M(\ckX_t^M)\rangle|\right] + \mathbb{E}\left[|\langle \ckX^N-\ckX^M,K_2^M(\ckX_t^M) - K_2^N(\ckX_t^N)\rangle |\right] \\
    & \leq \mathbb{E}\left[|\langle \ckX^N-\ckX^M,K_{11}^N(\ckX^N) - K_{11}^M(\ckX^M)\rangle |\right] + \delta \mathbb{E}\left[|\langle \ckX^N-\ckX^M, K_{12}^N(\ckX^N) - K_{12}^M(\ckX^M)\rangle| \right]\\
    & +\mathbb{E} \left[|\langle \ckX^N-\ckX^M, K_2^M(\ckX^M) - K_2^N(\ckX^N)\rangle |\right] \\
    & = \mathbb{E}\left[|\langle \ckX^N-\ckX^M, \bar{K}_{11}^{N,M}(\ckX^N,\ckX^M)\rangle |\right] + \delta \mathbb{E}\left[|\langle \ckX^N-\ckX^M, \bar{K}_{12}^{N,M}(\ckX^N,\ckX^M)\rangle | \right]\\
    & + \mathbb{E} \left[|\langle \ckX^N-\ckX^M, \bar{K}_2^{N,M}(\ckX^N, \ckX^M)\rangle |\right] \\
    & \leq C(\epsilon,R,\delta) \mathbb{E}\left[\|\ckX^N-\ckX^M\|_2^2\right] + C(\epsilon,\gamma,\delta)\mathbb{E}\left[\|\ckX^N-\ckX^M\|_{1,2}^2 \right] + \frac{C(k,R,a,\epsilon,\delta)}{M \wedge N}
     \end{aligned}
 \end{equation*}
which is exactly the control we were looking for. \\

\noindent\textbf{B. For $\bar{L}^{N,M}(\ckX^N, \ckX^M):=L^N(\ckX^N) - L^M(\ckX^M)$:} 


\begin{equation*}
\begin{aligned}
	& \langle \check{h}^N - \check{h}^M, \cJ_N(\nabla \cdot (a\cJ_N\nabla \ckh^N)) - \cJ_M(\nabla \cdot (a\cJ_M\nabla \ckh^M))\rangle  \\
	& = - \langle \nabla(\ckh^N-\ckh^M), a\cJ_N^2 \nabla \ckh^N\rangle + \langle \nabla(\ckh^N-\ckh^M), a\cJ_M^2 \nabla \ckh^M\rangle \\
	& = \langle \nabla(\ckh^N - \ckh^M), a(\cJ_M^2\nabla \ckh^M - \cJ_N^2\nabla \ckh^N)\rangle \\
	& = a \langle \nabla(\ckh^N - \ckh^M), \cJ_M^2\nabla(\ckh^M-\ckh^N) + (\cJ_M^2 - \cJ_N^2)\nabla \ckh^N \rangle \\
	& = -a\langle \cJ_M(\nabla(\ckh^N-\ckh^M)), \cJ_M(\nabla(\ckh^N-\ckh^M)) \rangle -a\langle \nabla(\ckh^N-\ckh^M), (\cJ_N^2 - \cJ_M^2)\nabla \ckh^N\rangle \\
	& = a\langle \ckh^N - \ckh^M, (\cJ_N^2 - \cJ_M^2)\Delta \ckh^N\rangle - a \| \cJ_M(\nabla (\ckh^N - \ckh^M))\|_2^2 \\
	& \leq \frac{C(a)}{N\wedge M}\|\ckh^N\|_{3,2}\|\ckh^N - \ckh^M\|_2 \\
	& \leq \frac{\tilde{C}_1(a)}{N\wedge M}\|\ckh^N - \ckh^M\|_2^2 + \frac{\tilde{C}_2(a)}{N\wedge M}\|\ckh^N\|_{k,2}^2 \\
	& \leq \frac{\tilde{C}_1(a)}{N\wedge M}\|\ckX^N - \ckX^M\|_2^2 + \frac{\tilde{C}_2(a)}{N\wedge M}\|\ckX^N\|_{k,2}^2
\end{aligned}
\end{equation*}
Then 
\begin{equation*}
    \begin{aligned}
       & \mathbb{E}\left[\langle \check{h}^N - \check{h}^M, \cJ_N(\nabla \cdot (a\cJ_N\nabla \ckh^N)) - \cJ_M(\nabla \cdot (a\cJ_M\nabla \ckh^M))\rangle\right] \leq \frac{\tilde{C}_1(a)}{N\wedge M}\mathbb{E}\left[\|\ckX^N - \ckX^M\|_2^2\right] + \frac{\tilde{C}_2(a)}{N\wedge M}\mathbb{E}\left[\|\ckX^N\|_{k,2}^2\right] \\
       & \leq \frac{\tilde{C}_1(a)}{N\wedge M}\left(\mathbb{E}\left[\|\ckX^N\|_2^2\right] + \mathbb{E}\left[\|\ckX^M\|_2^2\right]\right)+\frac{\tilde{C}_2(a)}{N\wedge M}\mathbb{E}\left[\|\ckX^N\|_{k,2}^2\right] \\
       & \leq \frac{C(a,R)}{M\wedge N}.
    \end{aligned}
\end{equation*}
\noindent\textbf{C. For $\bar{W}^{N,M}(\ckX^N,\ckX^M):=W^N(\ckX^N) - W^M(\ckX^M)$:}\\
\begin{itemize}
\item [a.] The stochastic term:
\begin{equation*}
    \begin{aligned}
        |\langle \check{h}_t^N - \check{h}_t^M, W^{N}(\check{h}^N) - W^{M}(\check{h}^M)  \rangle | = |\langle \check{h}_t^N - \check{h}_t^M,  \mathcal{J}_{N}\left(\mathcal{L}_{\sigma}\mathcal{J}_{N}\check{h}^{N}\right) - \mathcal{J}_{M}\left(\mathcal{L}_{\sigma}\mathcal{J}_{N}\check{h}^{M}\right)\rangle |
    \end{aligned}
\end{equation*}
Since $\nabla \cdot \sigma dB_t = 0$,
\begin{equation*}
    \begin{aligned}
       | \langle {\ckX}^{N}-\ckX^M, W^N({\ckX}^{N}) - W^M({\ckX}^{M})\rangle |  = 0 
    \end{aligned}
\end{equation*}
and therefore the stochastic term vanishes.
\item [b.] The It\^{o} correction:
\begin{equation*}
    \begin{aligned}
        |\langle W^N({\ckX}^{N}) - W^M({\ckX}^{M}), W^N({\ckX}^{N}) - W^M({\ckX}^{M}) \rangle | & \leq \frac{C(a)}{M\wedge N}\|\ckX^N-\ckX^M\|_{2,2}^2
    \end{aligned}
\end{equation*}
and then 
\begin{equation*}
    \begin{aligned}
       \mathbb{E}\left[|\langle W^N({\ckX}^{N}) - W^M({\ckX}^{M}), W^N({\ckX}^{N}) - W^M({\ckX}^{M}) \rangle | \right] & \leq \frac{C(a)}{M\wedge N}\mathbb{E}\left[\|\ckX^N-\ckX^M\|_{2,2}^2\right]\\
       & \leq \frac{C(a,R)}{M\wedge N}.
    \end{aligned}
\end{equation*}
\end{itemize}
\end{proof}

\begin{remark}
Note that we moved $F^N - F^M$ to the left hand side above since we have
\begin{equation*}
    \begin{aligned}
         \langle \check{h}_t^N - \check{h}_t^M,F_t^{N}(\check{h}_t^N)- F_t^{M}(\check{h}_t^M) \rangle  &= \eta  \langle \check{h}_t^N - \check{h}_t^M, \Delta\check{h}_t^N - \Delta\check{h}_t^M \rangle   = -\eta \|\nabla(\check{h}_t^N - \check{h}_t^M)\|_2^2
    \end{aligned}
\end{equation*}
and likewise for $u$ so
\begin{equation*}
    \begin{aligned}
         \langle \check{X}_t^N - \check{X}_t^M,F_t^{N}(\check{X}_t^N)- F_t^{M}(\check{X}_t^M) \rangle  &= \gamma  \langle \check{X}_t^N - \check{X}_t^M,\Delta\check{X}_t^N - \Delta\check{X}_t^M \rangle   = -\gamma \|\nabla(\check{X}_t^N - \check{X}_t^M)\|_2^2.
    \end{aligned}
\end{equation*}

\end{remark}   

\vspace{5mm}         
\begin{lemma}\label{lemma:estimdiv}
Let $\epsilon >0$ and $R, \gamma, k$ as above. For any $M,N>0$ there exist some constants which may depend on $\epsilon, R, \gamma, k$ such that the following control holds:
\begin{equation*}
\begin{aligned}
    &\mathbb{E}\left[|\langle \check{h}^N-\check{h}^M, \cJ_Nf_R(\ckX^N) \nabla \cdot (\cJ_N\check{u}^N\cJ_N\check{h}^N)) - \cJ_Mf_R(\check{X}^M)\nabla\cdot(\cJ_M\check{u}^M\cJ_M\check{h}^M) \rangle|\right] \\
    & \leq C(\epsilon,R) \mathbb{E}\left[\|\ckX^N-\ckX^M\|_2^2\right] + C(\epsilon,\gamma)\mathbb{E}\left[\|\ckX^N-\ckX^M\|_{1,2}^2 \right] + \frac{C(k,R)}{M \wedge N}. 
\end{aligned}
\end{equation*}
\end{lemma}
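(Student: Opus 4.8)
The plan is to estimate the inner product by a telescoping decomposition that separates the four independent sources of discrepancy between the $N$- and $M$-labelled terms: the transported height $\ckh$, the transporting velocity $\cku$, the Littlewood--Paley cut-offs $\cJ_N$ versus $\cJ_M$, and the scalar truncation $f_R$. Writing $\bar h:=\ckh^N-\ckh^M$, $\bar u:=\cku^N-\cku^M$ and $A^M:=\cJ_M\nabla\cdot(\cJ_M\cku^M\,\cJ_M\ckh^M)$, I would first isolate the truncation factor,
\begin{equation*}
f_R(\ckX^N)\,\cJ_N\nabla\cdot(\cJ_N\cku^N\,\cJ_N\ckh^N) - f_R(\ckX^M)\,A^M = f_R(\ckX^N)\big(\cJ_N\nabla\cdot(\cJ_N\cku^N\,\cJ_N\ckh^N) - A^M\big) + \big(f_R(\ckX^N)-f_R(\ckX^M)\big)A^M ,
\end{equation*}
and then telescope the first bracket by replacing $(\ckh^M,\cku^M,\cJ_M)$ by $(\ckh^N,\cku^N,\cJ_N)$ one factor at a time. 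After pairing with $\bar h$ this reduces the expression to a sum of: a pure transport term in which the common velocity $\cJ_N\cku^N$ advects $\bar h$; a velocity--difference term carrying $\bar u$; a projection--remainder term carrying $(\cJ_N-\cJ_M)$; and the truncation--difference term above. Each projection mismatch generated en route (e.g. $\cJ_N\ckh^M-\ckh^M=-(I-\cJ_N)\ckh^M$) is swept into the projection--remainder term.

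For the first three pieces the estimate is routine. In the transport term the divergence structure and self-adjointness of $\cJ_N$ give the antisymmetry cancellation $f_R(\ckX^N)\langle \cJ_N\bar h,\nabla\cdot(\cJ_N\cku^N\,\cJ_N\bar h)\rangle=\tfrac12 f_R(\ckX^N)\langle \nabla\cdot\cJ_N\cku^N,(\cJ_N\bar h)^2\rangle$; by Ladyzhenskaya's two--dimensional inequality together with the truncation bound $f_R(\ckX^N)\|\cku^N\|_{1,2}\le C(R)$ this is controlled by $C(R)\|\bar h\|_2\|\nabla\bar h\|_2$, and Young's inequality yields $C(\epsilon,R)\|\bar h\|_2^2+\epsilon\|\bar h\|_{1,2}^2$. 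The velocity--difference term is handled by integration by parts, Hölder and Ladyzhenskaya, using the truncation to keep the surviving solution bounded in $\cW^{1,2}$, again producing a $C(\epsilon,R)\|\ckX^N-\ckX^M\|_2^2+C(\epsilon,\gamma)\|\ckX^N-\ckX^M\|_{1,2}^2$ contribution. The projection--remainder term is estimated with the Bernstein--type bound of Remark \ref{importantLPremark}, $\|(I-\cJ_N)f\|_{k,2}^2\le \tfrac1N\|f\|_{k+1,2}^2$; combined with the uniform a priori estimate $\mathbb{E}\int_0^T\|\ckX^N\|_{k+1,2}^2\,ds\le C(R)$ from Lemma \ref{le:approxslnestim} this produces exactly the $\tfrac{C(k,R)}{M\wedge N}$ term.

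The main obstacle is the truncation--difference term $(f_R(\ckX^N)-f_R(\ckX^M))\langle\bar h, A^M\rangle$. The Lipschitz continuity of $f_R$ gives $|f_R(\ckX^N)-f_R(\ckX^M)|\le L\|\ckX^N-\ckX^M\|_{k,2}$, which introduces the strong norm $\|\ckX^N-\ckX^M\|_{k,2}$ that is absent from the right-hand side; this is precisely where the hypothesis $k\ge 2$ enters. Using the two--dimensional embeddings $\cW^{k,2}\hookrightarrow L^\infty\cap W^{1,4}$ one bounds $\|A^M\|_2\le C\|\ckX^M\|_{k,2}^2$, and the strategy is to absorb the two high-order factors $\|\ckX^N-\ckX^M\|_{k,2}$ and $\|\ckX^M\|_{k,2}^2$ through the uniform moment bounds $\sup_N\mathbb{E}[\sup_t\|\ckX^N\|_{k,2}^p]\le C(R)$ (valid for every $p$), leaving the genuinely small factor $\bar h$. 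The delicate bookkeeping is to arrange the Young/Hölder splitting so that what survives is \emph{linear} in $\mathbb{E}\|\ckX^N-\ckX^M\|_2^2$ with only an $\epsilon$-small coefficient on $\mathbb{E}\|\ckX^N-\ckX^M\|_{1,2}^2$, rather than a square root of $\mathbb{E}\|\ckX^N-\ckX^M\|_2^2$ or a non-vanishing constant; this is achieved by further exploiting the projection gap inside $A^M=\cJ_M[\,\cdot\,]$ and the $\cW^{k+1,2}$ time-integrability, so that the residual carries a $\tfrac{1}{M\wedge N}$ factor. It is exactly this term that forces the extra regularity $k\ge 2$ and allows the approximating sequence to be shown Cauchy directly, without a tightness argument. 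Summing the four contributions gives the stated bound.
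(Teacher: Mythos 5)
Your outline reproduces the paper's skeleton (first split off the $(\cJ_N-\cJ_M)$ remainder, then isolate the truncation difference, then telescope the product), and your treatment of the transport piece (antisymmetry cancellation) and of the projection remainder (Bernstein-type bound plus the uniform $\mathcal{W}^{k+1,2}$ estimate) is sound. But there are two genuine gaps, both traceable to one missing device. First, the velocity--difference term: your claim that the truncation ``keeps the surviving solution bounded in $\mathcal{W}^{1,2}$'' is false. The factor $f_R(\ckX^N)$ controls only $\|\ckX^N\|_{k,2}$, and whichever ordering of the telescoping you choose, $\cku^N\ckh^N-\cku^M\ckh^M=\cku^N(\ckh^N-\ckh^M)+(\cku^N-\cku^M)\ckh^M$ or the symmetric one, exactly one of the two resulting terms carries the \emph{other} solution ($\ckh^M$, resp.\ $\cku^M$) in a norm that $f_R(\ckX^N)$ cannot bound. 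You would then be forced to apply H\"older in $\omega$ together with moment bounds on $\ckX^M$, which produces square roots of $\mathbb{E}\left[\|\ckX^N-\ckX^M\|_2^2\right]$ --- exactly the structure you yourself note (for the truncation term) is fatal to the Gronwall/Cauchy argument. The paper's resolution is an event decomposition that is absent from your proposal: the whole estimate of the difference of products is carried out under the indicator $\mathds{1}_{\{\ckX^N,\ckX^M \in B(0,R+1)\}}$, on which \emph{both} solutions are pointwise bounded by $R+1$ so that every bound stays linear in the required expectations, while the three complementary events ($E^2$, $E^3$, $E^4$ in the paper) are disposed of separately, using that there the truncation factors vanish or that $\|\ckX^N-\ckX^M\|\geq 1$.

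Second, for the truncation--difference term you correctly identify the obstacle --- the Lipschitz bound introduces $\|\ckX^N-\ckX^M\|_{k,2}$, which does not appear on the right-hand side --- but your proposed cure (``exploit the projection gap inside $A^M$'') is unsubstantiated and does not work as stated: $A^M$ is already fully $\cJ_M$-projected, and the $1/(M\wedge N)$ factor in the paper does not originate there. The paper's actual mechanism, again under the indicator of the bounded event, combines three ingredients: (i) the double interpolation $\|\ckX^N-\ckX^M\|_{k,2}\leq C\|\ckX^N-\ckX^M\|_2^{1/(k+1)}\|\ckX^N-\ckX^M\|_{k+1,2}^{k/(k+1)}$ and $\|\nabla(\ckh^N-\ckh^M)\|_2\leq\|\ckh^N-\ckh^M\|_2^{k/(k+1)}\|\ckh^N-\ckh^M\|_{k+1,2}^{1/(k+1)}$, whose product collapses the two offending factors into $C\|\ckX^N-\ckX^M\|_2\,\|\ckX^N-\ckX^M\|_{k+1,2}$; (ii) a $1/(M\wedge N)$ prefactor extracted by estimating the product $\cJ_M\cku^M\cJ_M\ckh^M$ through $\mathcal{W}^{k-1,2}$-norms rather than $\mathcal{W}^{k,2}$-norms of $\ckX^M$ (the latter being bounded by $C(R)$ on the event); (iii) Young's inequality followed by the uniform a priori bound $\mathbb{E}\left[\|\ckX^N\|_{k+1,2}^2\right]\leq C(R)$ to absorb the surviving $\|\ckX^N-\ckX^M\|_{k+1,2}^2$. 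It is this interpolation trade --- strong norm of the difference exchanged for (weak norm of the difference) times ($\mathcal{W}^{k+1,2}$-norm of the difference) --- that yields simultaneously the linear dependence on $\mathbb{E}\left[\|\ckX^N-\ckX^M\|_2^2\right]$ and the vanishing $C(k,R)/(M\wedge N)$ remainder, and it is the key idea missing from your proposal.
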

\begin{proof}[\textbf{Proof of Lemma \ref{lemma:estimdiv}:}] 

Using the fact that  
\begin{equation*}
    \begin{aligned}
        & \cJ_Nf_R(\ckX^N)(\cJ_N\cku^N\cJ_N\ckh^N)-\cJ_Mf_R(\ckX^M)(\cJ_M\cku^M\cJ_M\ckh^M) \\
        &= \cJ_N\left( f_R(\ckX^N)\cJ_N\cku^N\cJ_N\ckh^N-f_R(\ckX^M)\cJ_M\cku^M\cJ_M\ckh^M\right) \\
        & + (\cJ_N-\cJ_M)f_R(\ckX^M)\cJ_M\cku^M\cJ_M\ckh^M \\
    \end{aligned}
\end{equation*}
we can write
\begin{equation*}
    \begin{aligned}
        T& := |\langle \ckh^N-\ckh^M, \cJ_Nf_R(\ckX^N) \nabla \cdot (\cJ_N\cku^N\cJ_N\ckh^N)) - \cJ_Mf_R(\ckX^M)\nabla\cdot(\cJ_M\cku^M\cJ_M\ckh^M) \rangle| \\
        & = |\langle \nabla(\ckh^N-\ckh^M), \cJ_N\left( f_R(\ckX^N)\cJ_N\cku^N\cJ_N\ckh^N-f_R(\ckX^M)\cJ_M\cku^M\cJ_M\ckh^M\right) + (\cJ_N-\cJ_M)f_R(\ckX^M)\cJ_M\cku^M\cJ_M\ckh^M \rangle | \\
        & \leq |\langle \cJ_N\nabla(\ckh^N-\ckh^M), f_R(\ckX^N)\cJ_N\cku^N\cJ_N\ckh^N-f_R(\ckX^M)\cJ_M\cku^M\cJ_M\ckh^M\rangle| \\
        & + |\langle (\cJ_N-\cJ_M)\nabla(\ckh^N-\ckh^M),\cJ_M\cku^M\cJ_M\ckh^M\rangle| =: T^1 + T^2.
    \end{aligned}
\end{equation*}
We start with the second term for which the analysis is simpler:
\begin{equation*}
    \begin{aligned}
    \mathbb{E}\left[T^2\right] &= \mathbb{E}\left[ |\langle (\cJ_N-\cJ_M)\nabla(\ckh^N-\ckh^M),\cJ_M\cku^M\cJ_M\ckh^M\rangle|\right] \\
    & \leq C(\epsilon,\gamma)\mathbb{E}\left[\|\ckX^N-\ckX^M\|_{1,2}^2 \right] + \frac{C(k)}{M\wedge N}.
    \end{aligned}
\end{equation*}
This control is similar to (but simpler than) the control obtained for $\Xi^2$ below so we do not re-do all the detailed calculations. 
To control the expectation of the first term (i.e. $\mathbb{E}\left[T^1\right]$) we use the fact that
\begin{equation*}
    \begin{aligned}
       \mathbb{E}\left[ T^1\right] &=  \mathbb{E}\left[T^1\mathds{1}_{\{\ckX^N,\ckX^M \in B(0,R+1)\}} \right] + \mathbb{E}\left[ T^1\mathds{1}_{\{\ckX^N,\ckX^M \notin B(0,R+1)\}} \right] \\
       & + \mathbb{E}\left[ T^1\mathds{1}_{\{\ckX^N \in B(0,R+1) \setminus B(0,R),\ckX^M \notin B(0,R+1)\}} \right] + \mathbb{E}\left[ T^1\mathds{1}_{\{\ckX^N \in B(0,R),\ckX^M \notin B(0,R+1)\}} \right] \\
       & =: E^1 + E^2 + E^3 + E^4.
    \end{aligned}
\end{equation*}
and we control each of these four terms separately below.\\

\noindent\textbf{Analysis for $E^1$:}
Note that we can write 
\begin{equation*}
    \begin{aligned}
        &|\langle \cJ_N(\ckh^N-\ckh^M), f_R(\ckX^N) \nabla \cdot (\cJ_N\cku^N\cJ_N\ckh^N)) - f_R(\ckX^M)\nabla\cdot(\cJ_M\cku^M\cJ_M\ckh^M) \rangle|\mathds{1}_{\{\ckX^N,\ckX^M \in B(0,R+1)\}}  \\
        & = |\langle \cJ_N\nabla(\ckh^N-\ckh^M),f_R(\ckX^N)(\cJ_N\cku^N\cJ_N\ckh^N)-f_R(\ckX^M)(\cJ_M\cku^M\cJ_M\ckh^M)\rangle|\mathds{1}_{\{\ckX^N,\ckX^M \in B(0,R+1)\}}  \\
        & \leq |\langle \cJ_N\nabla(\ckh^N-\ckh^M), f_R(\ckX^N)(\cJ_N\cku^N\cJ_N\ckh^N-\cJ_M\cku^M\cJ_M\ckh^M)\rangle|\mathds{1}_{\{\ckX^N,\ckX^M \in B(0,R+1)\}}  \\
        & + |\langle \cJ_N\nabla(\ckh^N-\ckh^M), |f_R(\ckX^N)-f_R(\ckX^M)|\cJ_M\cku^M\cJ_M\ckh^M\rangle |\mathds{1}_{\{\ckX^N,\ckX^M \in B(0,R+1)\}}  \\
        & =: \Xi^1 + \Xi^2.
    \end{aligned}
\end{equation*}
For $\Xi^1$ we have 
\begin{equation*}
    \begin{aligned}
      \mathbb{E}\left[\Xi^1\right] &= \mathbb{E}\left[|\langle \cJ_N\nabla(\ckh^N-\ckh^M), f_R(\ckX^N)(\cJ_N\cku^N\cJ_N\ckh^N-\cJ_M\cku^M\cJ_M\ckh^M)\rangle|\mathds{1}_{\{\ckX^N,\ckX^M \in B(0,R+1)\}}  \right]\\
       & \leq \epsilon\mathbb{E}\left[\|\nabla(\ckh^N-\ckh^M)\|_2^2\right] + C(\epsilon)\mathbb{E}\left[\|f_R(\ckX^N)(\cJ_N\cku^N\cJ_N\ckh^N-\cJ_M\cku^M\cJ_M\ckh^M)\|_2^2 \mathds{1}_{\{\ckX^N,\ckX^M \in B(0,R+1)\}} \right]\\
       & \leq \epsilon\mathbb{E}\left[\|\nabla(\ckh^N-\ckh^M)\|_2^2\right] + C(\epsilon)\mathbb{E}\left[\|\cJ_N\cku^N\cJ_N\ckh^N-\cJ_M\cku^M\cJ_M\ckh^M\|_2^2\mathds{1}_{\{\ckX^N,\ckX^M \in B(0,R+1)\}}\right]\\
       & \leq \epsilon\mathbb{E}\left[\|\nabla(\ckh^N-\ckh^M)\|_2^2 \right] + C(\epsilon)2(R+1)\mathbb{E}\left[\|\ckX^N-\ckX^M\|_{2}^2 \right]\\
       & \leq C(\epsilon,\gamma)\mathbb{E}\left[\|\ckX^N-\ckX^M\|_{1,2}^2 \right] + C(\epsilon, R)\mathbb{E}\left[\|\ckX^N-\ckX^M\|_{2}^2 \right].
    \end{aligned}
\end{equation*}
We used above the fact that
\begin{equation*}
    |\cku^N\ckh^N-\cku^M\ckh^M| \leq |\cku^N(\ckh^N-\ckh^M)| + |\ckh^M(\cku^N-\cku^M)|
\end{equation*}
and $\cku^N, \ckh^M \in B(0, R+1)$.
For $\Xi^2$ we have, using that $\cJ_N\nabla(\ckh^N-\ckh^M) = \nabla(\ckh^N-\ckh^M)$,
\begin{equation*}
    \begin{aligned}
    \mathbb{E}\left[\Xi^2\right] & = \mathbb{E}\left[|\langle \nabla(\ckh^N-\ckh^M), |f_R(\ckX^N)-f_R(\ckX^M)|\cJ_M\cku^M\cJ_M\ckh^M\rangle | \mathds{1}_{\{\ckX^N,\ckX^M \in B(0,R+1)\}}\right]\\
    & \leq \mathbb{E}\left[|f_R(\ckX^N)-f_R(\ckX^M)|  |\langle \nabla(\ckh^N-\ckh^M), \cJ_M\cku^M\cJ_M\ckh^M\rangle|\mathds{1}_{\{\ckX^N,\ckX^M \in B(0,R+1)\}}\right] \\
    & \leq \mathbb{E}\left[ \|\ckX^N-\ckX^M\|_{k,2}\|\nabla(\ckh^N-\ckh^M)\|_2\|\cJ_M\cku^M\cJ_M\ckh^M\|_2\mathds{1}_{\{\ckX^N,\ckX^M \in B(0,R+1)\}}\right]\\
    & \leq \mathbb{E}\left[ \|\ckX^N-\ckX^M\|_{k,2}\|\nabla(\ckh^N-\ckh^M)\|_2\|\cJ_M\cku^M\|_4\|\cJ_M\ckh^M\|_4\mathds{1}_{\{\ckX^N,\ckX^M \in B(0,R+1)\}}\right] \\
    & \leq \mathbb{E}\left[ \|\ckX^N-\ckX^M\|_{k,2}\|\nabla(\ckh^N-\ckh^M)\|_2\|\cJ_M\cku^M\|_{k-1,2}\|\cJ_M\ckh^M\|_{k-1,2}\mathds{1}_{\{\ckX^N,\ckX^M \in B(0,R+1)\}}\right] \\
    & \leq \mathbb{E}\left[\|\ckX^M\|_{k-1,2}^2 \|\ckX^N-\ckX^M\|_{k,2}\|\nabla(\ckh^N-\ckh^M)\|_2\mathds{1}_{\{\ckX^N,\ckX^M \in B(0,R+1)\}}\right] \\
    & \leq \frac{1}{M \wedge N}\mathbb{E}\left[\|\ckX^M\|_{k,2}^2\|\nabla(\ckh^N-\ckh^M)\|_2 \|\ckX^N-\ckX^M\|_{k,2} \mathds{1}_{\{\ckX^N,\ckX^M \in B(0,R+1)\}}\right]\\
    & \leq \frac{C(k)}{M \wedge N}\mathbb{E}\left[\|\ckX^M\|_{k,2}^2\|\nabla(\ckh^N-\ckh^M)\|_2 \|\ckX^N-\ckX^M\|_{2}^{\frac{1}{k+1}}\|\ckX^N-\ckX^M\|_{k+1,2}^{\frac{k}{k+1}} \mathds{1}_{\{\ckX^N,\ckX^M \in B(0,R+1)\}}\right]\\
    & \leq \frac{C(k)}{M \wedge N}\mathbb{E}\left[\|\ckX^M\|_{k,2}^2\|\ckh^N-\ckh^M\|_2^{\frac{k}{k+1}}\|\ckh^N-\ckh^M\|_{k+1,2}^{\frac{1}{k+1}} \|\ckX^N-\ckX^M\|_{2}^{\frac{1}{k+1}}\|\ckX^N-\ckX^M\|_{k+1,2}^{\frac{k}{k+1}} \mathds{1}_{\{\ckX^N,\ckX^M \in B(0,R+1)\}}\right]\\
    & \leq \frac{C(k)}{M \wedge N}\mathbb{E}\left[\|\ckX^M\|_{k,2}^2\|\ckX^N-\ckX^M\|_2^{\frac{k}{k+1}}\|\ckX^N-\ckX^M\|_{k+1,2}^{\frac{1}{k+1}} \|\ckX^N-\ckX^M\|_{2}^{\frac{1}{k+1}}\|\ckX^N-\ckX^M\|_{k+1,2}^{\frac{k}{k+1}} \mathds{1}_{\{\ckX^N,\ckX^M \in B(0,R+1)\}}\right]\\
    & = \frac{C(k)}{M \wedge N}\mathbb{E}\left[\|\ckX^M\|_{k,2}^2\|\ckX^N-\ckX^M\|_2\|\ckX^N-\ckX^M\|_{k+1,2} \mathds{1}_{\{\ckX^N,\ckX^M \in B(0,R+1)\}}\right]\\
    & \leq \frac{\tilde{C}(k)}{M \wedge N}\mathbb{E}\left[\|\ckX^M\|_{k,2}^2\|\ckX^N-\ckX^M\|_2\|\ckX^N-\ckX^M\|_{k+1,2} \mathds{1}_{\{\ckX^N,\ckX^M \in B(0,R+1)\}}\right]\\
    & \leq \mathbb{E}\left[\|\ckX^M\|_{k,2}^2\|\ckX^N-\ckX^M\|_2\frac{\tilde{C}(k)}{M \wedge N} \|\ckX^N-\ckX^M\|_{k+1,2}\mathds{1}_{\{\ckX^N,\ckX^M \in B(0,R+1)\}}\right]\\
    & \leq \mathbb{E}\left[\|\ckX^M\|_{k,2}^2\mathds{1}_{\{\ckX^N,\ckX^M \in B(0,R+1)\}}\|\ckX^N-\ckX^M\|_2\frac{\tilde{C}(k)}{M \wedge N} \|\ckX^N-\ckX^M\|_{k+1,2} \right]\\
    & \leq \mathbb{E}\left[C(R)\|\ckX^N-\ckX^M\|_2\frac{\tilde{C}(k)}{M \wedge N} \|\ckX^N-\ckX^M\|_{k+1,2} \right]\\
    & \leq \frac{C(R)}{2}\mathbb{E}\left[\|\ckX^N-\ckX^M\|_2^2\right] + \frac{\tilde{C}(k)^2}{2(M \wedge N)^2} \mathbb{E}\left[\|\ckX^N-\ckX^M\|_{k+1,2}^2\right].
    \end{aligned}
\end{equation*}
That is, using the uniform control obtained in Lemma [unifcontrol]:
\begin{equation*}
   \begin{aligned}
   \mathbb{E}\left[\Xi^2\right]  
   & \leq \tilde{C}(R) \mathbb{E}\left[\|\ckX^N-\ckX^M\|_2^2\right] + \frac{\tilde{C}_1(k)}{M \wedge N} \left(\mathbb{E}\left[\|\ckX^N\|_{k+1,2}^2 \right] + \mathbb{E}\left[\|\ckX^M\|_{k+1,2}^2 \right]\right) \\
   & \leq \tilde{C}(R) \mathbb{E}\left[\|\ckX^N-\ckX^M\|_2^2\right] + \frac{\tilde{C}_2(k,R)}{M \wedge N}.
    \end{aligned}
\end{equation*}
In conclusion
\begin{equation*}
    \begin{aligned}
       E^1 \leq C(\epsilon,R)\mathbb{E}\left[\|\ckX^N-\ckX^M\|_2^2 \right] + C(\epsilon,\gamma)\mathbb{E}\left[\|\ckX^N-\ckX^M\|_{1,2}^2 \right] + \frac{C(k,R)}{M\wedge N}.
    \end{aligned}
\end{equation*}

\vspace{3mm}
\noindent\textbf{Analysis of $E^2$:} We have 
\begin{equation*}
    \begin{aligned}
    0 = \|f_R(\ckX^N)\cJ_N\ckh^N\cJ_N\cku^N - f_R(\ckX^M)\cJ_M\ckh^M\cJ_M\cku^M \|_2^2 \leq C\|\ckX^N-\ckX^M\|_2^2.
    \end{aligned}
\end{equation*}
That is
\begin{equation*}
    E^2 \leq C\mathbb{E}\left[\|\ckX^N-\ckX^M\|_2^2\right].
\end{equation*}
\vspace{3mm}
\noindent\textbf{Analysis of $E^3$:} We have 
\begin{equation*}
    \begin{aligned}
    0 = \|f_R(\ckX^N)\cJ_N\ckh^N\cJ_N\cku^N - f_R(\ckX^M)\cJ_M\ckh^M\cJ_M\cku^M \|_2^2 \leq C \|\ckX^N-\ckX^M\|_2^2.
    \end{aligned}
\end{equation*}
That is
\begin{equation*}
    E^3 \leq C\mathbb{E}\left[\|\ckX^N-\ckX^M\|_2^2\right].
\end{equation*}

\noindent\textbf{Analysis of $E^4$:} We have 
\begin{equation*}
    \begin{aligned}
    \|f_R(\ckX^N)\cJ_N\ckh^N\cJ_N\cku^N - f_R(\ckX^M)\cJ_M\ckh^M\cJ_M\cku^M \|_2^2 = \|f_R(\ckX^N)\ckh^N\cku^N\|_2^2 \leq C(R) \leq C(R) \|\ckX^N-\ckX^M\|_2^2.
    \end{aligned}
\end{equation*}
since $\|\ckX^N-\ckX^M\|_2 \geq |\|\ckX^M\|_2 - \|\ckX^N\|_2 |= (R+1)-R=1$ so $\|\ckX^N-\ckX^M\|_2^2 \geq 1$.
That is
\begin{equation*}
    E^4 \leq C(R)\mathbb{E}\left[\|\ckX^N-\ckX^M\|_2^2\right].
\end{equation*}
Summing everything up we have
\begin{equation*}
    \begin{aligned}
       \mathbb{E}\left[T\right] & \leq \mathbb{E}\left[T^1\right] + \mathbb{E}\left[T^2\right] \\
       & \leq C(\epsilon,R)\mathbb{E}\left[\|\ckX^N-\ckX^M\|_2^2 \right] + C(\epsilon,\gamma)\mathbb{E}\left[\|\ckX^N-\ckX^M\|_{1,2}^2 \right] + \frac{C(k,R)}{M\wedge N} 
    \end{aligned}
\end{equation*}
which concludes the proof. 
\end{proof}

\begin{lemma}\label{generallemmauniquenesslu}
Let $(X^1,\tau^1), (X^2,\tau^2)$ be two local solutions and $$\bar{u}:=u^1-u^2, \ \ \ \tau^{1,2} = \tau^1\wedge\tau^2, \ \ \ X^i:=(u^i, h^i), \ \ \ \bar{X}:= (\bar{u}, \bar{h}),$$ 
$$Q({\bar{h}},\bar{u}):=f_R(X^1)\nabla \cdot (h^1 u^1) - f_R(X^2)\nabla \cdot(h^2u^2)+ f_R(X^1)\beta h^{1}(\nabla \cdot u^{1})-f_R(X^2)\beta h^{2}(\nabla \cdot u^{2})$$ where $\bar{h}:=h^1-h^2$. 
Then there exists $\epsilon > 0 $ and $C(\epsilon,\beta),C(\epsilon,R,\beta)$ such that for any $\theta$ with $|\theta|\leq k$
\begin{equation*}
\begin{aligned}
	|\langle \partial^{\theta}\bar{X}, \partial^{\theta}Q({\bar{h}},\bar{u})\rangle| 
	& \leq C(\epsilon,\beta)\|\bar{X}\|_{k+1,2}^2 + C(\epsilon,R, \beta)\|Z\|\|\bar{X}\|_{k,2}^2
\end{aligned}
\end{equation*}
with 
\begin{equation*}
    \|Z\| := \left( \|X^1\|_{k,2}^2 + \|X^2\|_{k,2}^2 \right)^2.
\end{equation*}
\end{lemma}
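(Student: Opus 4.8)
The plan is to reduce the difference $Q(\bar h,\bar u)$ to a sum of terms that are either genuinely of highest order (and so absorbable into $\|\bar X\|_{k+1,2}^2$ with a small constant) or that carry an explicit factor of a solution norm (and so feed the weight $\|Z\|$). First I would record the two structural facts that drive everything: since $f_R(X^i)=f_R(\|X^i\|_{k,2})$ is \emph{constant in the space variable}, the operator $\partial^\theta$ never falls on it, so it acts as a scalar multiplier; and since $f_R$ is Lipschitz with slope of order one on $[R,R+1]$, one has $|f_R(X^1)-f_R(X^2)|\le C\|\bar X\|_{k,2}$. I would then telescope,
\begin{equation*}
Q(\bar h,\bar u)=f_R(X^1)\big(\nabla\cdot(h^1u^1)-\nabla\cdot(h^2u^2)\big)+\big(f_R(X^1)-f_R(X^2)\big)\nabla\cdot(h^2u^2),
\end{equation*}
together with the analogous split of the $\beta\,h(\nabla\cdot u)$ term, and use bilinearity $h^1u^1-h^2u^2=\bar h\,u^1+h^2\bar u$ to expose the difference in each factor.

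For the first (coefficient-frozen) group I would apply $\partial^\theta$ by the Leibniz rule and pair with $\partial^\theta\bar h$. Whenever a solution factor ($u^1$ or $h^2$) would otherwise carry $k+1$ derivatives I integrate by parts once, shifting that derivative onto the difference $\bar X$; after this redistribution every solution factor is measured in $\mathcal W^{k,2}$ only. The genuine top-order piece is then a transport-type term $\langle\partial^\theta\bar h,\,u^1\cdot\nabla\partial^\theta\bar h\rangle$, which integration by parts on the torus converts into $-\tfrac12\langle(\nabla\cdot u^1)\partial^\theta\bar h,\partial^\theta\bar h\rangle$; Ladyzhenskaya's and Young's inequalities bound it by $\epsilon\|\bar h\|_{k+1,2}^2+C(\epsilon)\|u^1\|_{k,2}^2\|\bar h\|_{k,2}^2$. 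Every remaining commutator term $\partial^\beta(\text{coeff})\,\partial^{\theta-\beta}\nabla(\text{diff})$ with $|\beta|\ge1$ is estimated by Gagliardo--Nirenberg (Kato--Ponce) product inequalities in two dimensions, the available derivatives distributed so that $\|\bar X\|_{k+1,2}$ appears to at most first power and the solution enters only through its $\mathcal W^{k,2}$ norm.

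The quartic weight $\|Z\|=(\|X^1\|_{k,2}^2+\|X^2\|_{k,2}^2)^2$ arises from the truncation-difference term. There $f_R(X^1)-f_R(X^2)$ is a scalar, so I would integrate by parts once to write $\langle\partial^\theta\bar h,\partial^\theta\nabla\cdot(h^2u^2)\rangle=-\langle\nabla\partial^\theta\bar h,\partial^\theta(h^2u^2)\rangle$, bound $\|h^2u^2\|_{k,2}\le C\|h^2\|_{k,2}\|u^2\|_{k,2}\le C\|X^2\|_{k,2}^2$ (again via Ladyzhenskaya when $k=1$), and apply Young's inequality:
\begin{equation*}
C\,\|\bar X\|_{k,2}\,\|\bar h\|_{k+1,2}\,\|X^2\|_{k,2}^2\le \epsilon\,\|\bar h\|_{k+1,2}^2+C(\epsilon)\,\|X^2\|_{k,2}^4\,\|\bar X\|_{k,2}^2 ,
\end{equation*}
noting $\|X^2\|_{k,2}^4\le\|Z\|$. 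Finally I would collect all pieces, using $\|\bar X\|_{k,2}^2\le\|\bar X\|_{k+1,2}^2$ and $\|X^i\|_{k,2}^2\le 1+\|Z\|$ to fold the residual terms that are only quadratic in the solution norms into the two allowed contributions, which yields the stated bound with $C(\epsilon,\beta)$ multiplying $\|\bar X\|_{k+1,2}^2$ and $C(\epsilon,R,\beta)\|Z\|$ multiplying $\|\bar X\|_{k,2}^2$.

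The main obstacle is the endpoint $k=1$: on $\mathbb T^2$ the space $\mathcal W^{1,2}$ is neither a Banach algebra nor embedded in $L^\infty$, so every product estimate must be routed through Ladyzhenskaya/Gagliardo--Nirenberg with the derivatives carefully balanced, ensuring that whenever the highest-order factor $\|\bar X\|_{k+1,2}$ is produced it carries an $\epsilon$ while the accompanying solution factor stays in $\mathcal W^{k,2}$. It is precisely this bookkeeping — keeping the top-order norm of the \emph{difference} linear and absorbable by the viscous term, while all solution dependence collapses onto $\mathcal W^{k,2}$ norms — that forces the weight to be taken quartic rather than quadratic.
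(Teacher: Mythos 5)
Your proposal follows essentially the same route as the paper's proof of Lemma \ref{generallemmauniquenesslu}: the paper likewise telescopes $Q(\bar{h},\bar{u})$ into two coefficient-frozen bilinear differences plus a truncation-difference term (its $T_1+T_2+T_3$), relies on exactly your two structural facts ($f_R(X^i)$ is constant in $x$, and $|f_R(X^1)-f_R(X^2)|\le C\|\bar{X}\|_{k,2}$), shifts one derivative by parts onto the difference, and closes each term with Leibniz, $L^4$--H\"older/Ladyzhenskaya interpolation and Young's inequality, collecting the weights into $\tilde{Z}\le \|Z\|$. The only methodological difference is your cancellation $\langle\partial^{\theta}\bar{h},u\cdot\nabla\partial^{\theta}\bar{h}\rangle=-\tfrac12\langle(\nabla\cdot u)\,\partial^{\theta}\bar{h},\partial^{\theta}\bar{h}\rangle$ for the top-order transport piece; the paper does not use it and simply pays $\epsilon\|\bar{X}\|_{k+1,2}^2$ for that term as well, so your variant is admissible and marginally sharper.

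Two steps of your final assembly need repair. First, the folding: passing through $\|X^i\|_{k,2}^2\le\tfrac12(1+\|Z\|)$ and then $\|\bar{X}\|_{k,2}^2\le\|\bar{X}\|_{k+1,2}^2$ converts the residual quadratic-weight terms into $\tfrac12 C(\epsilon)\|\bar{X}\|_{k+1,2}^2+\ldots$, so the coefficient in front of $\|\bar{X}\|_{k+1,2}^2$ is no longer of order $\epsilon$ (it blows up as $\epsilon\to 0$). The lemma as literally stated survives, but in Theorem \ref{prop:uniquenesstruncatedlu} this coefficient must be made smaller than $\gamma$ so that the viscous term absorbs it, and your bound cannot deliver that. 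The fix is one line: apply Young to the weight itself, $\sqrt{\|Z\|}\le\tfrac{\lambda}{2}+\tfrac{1}{2\lambda}\|Z\|$ with $\lambda=\epsilon/C(\epsilon)$, which turns every quadratic-weight term into $\tfrac{\epsilon}{2}\|\bar{X}\|_{k+1,2}^2+C'(\epsilon)\|Z\|\|\bar{X}\|_{k,2}^2$ and keeps the top-order coefficient tunable. Second, the product bound $\|h^2u^2\|_{k,2}\le C\|h^2\|_{k,2}\|u^2\|_{k,2}$ that you invoke for the truncation term is false at $k=1$: $\mathcal{W}^{1,2}(\mathbb{T}^2)$ is not an algebra, and Ladyzhenskaya only gives $\|\nabla h^2\|_{4}\le C\|h^2\|_{1,2}^{1/2}\|h^2\|_{2,2}^{1/2}$, which leaks the $\mathcal{W}^{k+1,2}$ norm of a \emph{solution} into the weight, something $\|Z\|$ is not allowed to contain. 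For $k\ge 2$ the algebra property (equivalently $\mathcal{W}^{k,2}(\mathbb{T}^2)\hookrightarrow L^{\infty}$) makes your estimate correct, and in fact both your argument and the paper's are genuinely $k\ge 2$ arguments; at $k=1$ one has to keep the product inside the pairing and distribute the $L^4$ interpolation so that every half-power of a $(k+1)$-st derivative falls on the difference $\bar{X}$, where it is absorbed after Young with exponents $4/3$ and $4$. The paper itself is cavalier on both points, but since they are precisely where the stated form of the bound is earned, you should fix them in your write-up.
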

\begin{proof}[\textbf{Proof.}]
We have $\nabla \cdot (uh) = u \cdot \nabla h + h(\nabla \cdot u) = \mathcal{L}_u h + \mathcal{D}_u h$ for any vector $u$ and scalar $h$. Note that we can write 
\begin{equation*}
    \begin{aligned}
      Q(\bar{h},\bar{u}) =  f_R(X^1)\left( \mathcal{L}_{u^1}h^1 + (1+\beta)\mathcal{D}_{u^1}h^1\right) - f_R(X^2)\left( \mathcal{L}_{u^2}h^2 + (1+\beta)\mathcal{D}_{u^2}h^2\right)
    \end{aligned}
\end{equation*}
and we can decompose this as 
\begin{equation*}
\begin{aligned}
Q(\bar{h},\bar{u}) &= f_R(X^1)(\mathcal{L}_{\bar{u}}h^1 + (1+\beta)\mathcal{D}_{\bar{u}}h^1) + f_R(X^2)(\mathcal{L}_{u^2}\bar{h}+(1+\beta)\mathcal{D}_{u^2}\bar{h}) \\
& + |f_R(X^1) - f_R(X^2)| (\mathcal{L}_{u^2}h^1 + (1+\beta)\mathcal{D}_{u^2}h^1) \\
& := T_1 + T_2 + T_3.
\end{aligned}
\end{equation*}
For $\mathcal{L}_u h$ one can use the fact that
for any multi-index $\theta$ of length $|\theta| \leq k$
\begin{equation*}
\begin{aligned}
|\langle \partial^{\theta}\bar{X}, \partial^{\theta}(u \cdot \nabla h)\rangle| & = |\langle \partial^{\theta + 1}\bar{X}, \partial^{\theta - 1}(u \cdot \nabla h)\rangle|	\\
& \leq C \|\partial^{\theta + 1} \bar{X}\|_2\|\partial^{\theta-1}(u \cdot \nabla h)\|_{2} \\
& \leq C \|\partial^{\theta + 1} \bar{X}\|_2 \displaystyle\sum_{j \leq \theta -1}C\|\partial^{j} u\|_4\|\partial^{\theta-j} h\|_4 \\
& \leq C \|\partial^{\theta + 1} \bar{X}\|_2\|\partial^{j} u\|_2^{1/2}\|\partial^{j +1}u\|_2^{1/2}\|\partial^{\theta-j} h\|_2^{1/2}\|\partial^{\theta - j +1} h\|_2^{1/2}\\
& \leq C \|\bar{X}\|_{k+1,2}\|u\|_{k-1,2}^{1/2}\|u\|_{k,2}^{1/2}\|h\|_{k,2}^{1/2}\|h\|_{k,2}^{1/2} \\
& \leq C \|\bar{X}\|_{k+1,2}\|u\|_{k,2}\|h\|_{k,2} \\
& \leq \frac{\epsilon}{2} \|\bar{X}\|_{k+1,2}^2 + \frac{C_1(\epsilon, R)}{2} \|u\|_{k,2}^2\|h\|_{k,2}^2. 
\end{aligned}	
\end{equation*}
Similarly, for $\mathcal{D}_u h$ we use that 
\begin{equation*}
    \begin{aligned}
   (1+\beta) |\langle \partial^{\theta}\bar{X}, \partial^{\theta}(h (\nabla \cdot u))\rangle| & = (1+\beta)|\langle \partial^{\theta + 1} \bar{X}, \partial^{\theta - 1}(h (\nabla \cdot u))\rangle| \\
    & \leq (1+\beta)C \|\partial^{\theta +1} \bar{X}\|_2 \displaystyle\sum_{j\leq\theta - 1} C \|\partial^{j} h\|_4\|\partial^{\theta - j} u\|_4 \\
    & \leq (1+\beta)\left(\frac{\epsilon}{2} \|\bar{X}\|_{k+1,2}^2 + \frac{C_2(\epsilon, R)}{2} \|h\|_{k,2}^2\| u\|_{k,2}^2\right).
    \end{aligned}
\end{equation*}
Summing up we have
\begin{equation*}
    \begin{aligned}
        |\langle\partial^{\theta}\bar{X}, \partial^{\theta}(\mathcal{L}_uh + (1+\beta)\mathcal{D}_uh) \rangle| \leq C(\epsilon,\beta)\|\bar{X}\|_{k+1,2}^2 + C(\epsilon, R,\beta)\|h\|_{k,2}^2\|u\|_{k,2}^2.
    \end{aligned}
\end{equation*}
Now apply this for $T_i, i=1,2,3:$
\begin{equation*}
\begin{aligned}
|\langle \partial^{\theta}\bar{X}, \partial^{\theta}T_1\rangle| &
& \leq \frac{C(\epsilon,\beta)}{3}\|\bar{X}\|_{k+1,2}^2 + \frac{C(\epsilon, R,\beta)}{3}\|X^1\|_{k,2}^2\|h^1\|_{k,2}^2\|\bar{u}\|_{k,2}^2 
\end{aligned}
\end{equation*}
\begin{equation*}
\begin{aligned}
|\langle \partial^{\theta}\bar{X}, \partial^{\theta}T_2\rangle| 
& \leq \frac{C(\epsilon,\beta)}{3}\|\bar{X}\|_{k+1,2}^2 + \frac{C(\epsilon, R,\beta)}{3}\|X^2\|_{k,2}^2\|u^2\|_{k,2}^2\|\bar{h}\|_{k,2}^2
\end{aligned}
\end{equation*}
\begin{equation*}
\begin{aligned}
|\langle \partial^{\theta}\bar{X}, \partial^{\theta}T_3\rangle| & \leq C|f_R(X^1)-f_R(X^2)|\|\bar{X}\|_{k+1,2}\|h^1\|_{k,2}\|u^2\|_{k,2}\\
& \leq C\|\bar{X}\|_{k,2}\|\bar{X}\|_{k+1,2}\|h^1\|_{k,2}\|u^2\|_{k,2}\\
& \leq \frac{C(\epsilon,\beta)}{3}\|\bar{X}\|_{k+1,2}^2 + \frac{C(\epsilon, R,\beta)}{3}\|h^1\|_{k,2}^2\|u^2\|_{k,2}^2\|\bar{X}\|_{k,2}^2.
\end{aligned}
\end{equation*}
Define
\begin{equation*} 
\tilde{Z}:= \|X^1\|_{k,2}^2\|h^1\|_{k,2}^2 + \|X^2\|_{k,2}^2\|u^2\|_{k,2}^2 + \|h^1\|_{k,2}^2\|u^2\|_{k,2}^2 \leq \left( \|X^1\|_{k,2}^2 + \|X^2\|_{k,2}^2 \right)^2
\end{equation*}
and 
\begin{equation*}
Z := \left( \|X^1\|_{k,2}^2 + \|X^2\|_{k,2}^2 \right)^2.
\end{equation*}
Then 
\begin{equation*}
\begin{aligned}
	|\langle \partial^{\theta}\bar{X}, \partial^{\theta}Q({\bar{h}},\bar{u})\rangle| 
	& \leq C(\epsilon,\beta)\|\bar{X}\|_{k+1,2}^2 + C(\epsilon,R,\beta)\|Z\|\|\bar{X}\|_{k,2}^2.
\end{aligned}
\end{equation*}
\end{proof}

\vspace{3mm}
\noindent\textbf{Funding}\\
\noindent All three authors were partially supported by the European Research Council (ERC) under the European Union’s Horizon 2020 Research and Innovation Programme (ERC, Grant Agreement No 856408). 

\vspace{3mm}
\noindent\textbf{Data availability statement} \\
\noindent Data sharing not applicable to this article as no datasets were generated or analysed during the current study.

\end{document}